\newtheorem{theorem}{Theorem}[section]
\newtheorem{lemma}[theorem]{Lemma}
\newtheorem{proposition}[theorem]{Proposition}
\newtheorem{corollary}[theorem]{Corollary}
\theoremstyle{definition}
\newtheorem{definition}[theorem]{Definition}
\newcommand{\rP}{\mathbb P}
\newcommand{\ro}{\mathbf R}
\newcommand{\co}{\mathbf C}
\newcommand{\qo}{\mathbf Q}
\newcommand{\no}{\mathbf N}
\newcommand{\nn}[1]{\mathbf N^{#1}}
\newcommand{\zo}{\mathbf Z}
\newcommand{\rr}[1]{\mathbf R^{#1}}
\newcommand{\sr}[1]{\mathbf S^{#1}}
\newcommand{\dd}{\mathrm {d}}
\newcommand{\leqs}{\leqslant}
\newcommand{\geqs}{\geqslant}
\newcommand{\sgn}{\operatorname{sgn}}
\newcommand{\rB}{\operatorname{B}}
\newcommand{\pdd}[2] {\partial_{#1} ^{#2}}
\newcommand{\ep}{\varepsilon}
\newcommand{\wpr}{{\text{\footnotesize $\#$}}}
\def\la{\langle}
\def\ra{\rangle}
\newcommand{\fy}{\varphi}
\newcommand{\supp}{\operatorname{supp}}
\newcommand{\wt}{\widetilde}
\newcommand{\eabs}[1]{\langle #1\rangle}
\newcommand{\charac}{\operatorname{char}}
\def\R{\mathbb R}
\def\1{\mathbb 1}
\def\cS{\mathscr{S}}
\def\cF{\mathscr{F}}
\def\cB{\mathcal{B}}
\newcommand{\cK}{\mathscr{K}}
\newcommand{\WFgs}{\mathrm{WF_{g}^{\it \sigma}}}
\newcommand{\WFg}{\mathrm{WF_{g}}}
\newcommand{\re}{{\rm Re \,}}
\def\la{\langle}
\def\ra{\rangle}
\theoremstyle{remark}
\newtheorem{remark}[theorem]{Remark}
\numberwithin{equation}{section}
\begin{document}

\title
[Propagation of singularities for Schr\"odinger equations]
{Propagation of singularities for anharmonic Schr\"odinger equations}

\author{Marco Cappiello}
\address{Dipartimento di Matematica, Universit\`a di Torino, Via Carlo Alberto 10, 10123 Torino, Italy}
\email{marco.cappiello[A]unito.it}

\author{Luigi Rodino}
\address{Dipartimento di Matematica, Universit\`a di Torino, Via Carlo Alberto 10, 10123 Torino, Italy}
\email{luigi.rodino[A]unito.it}
\author{Patrik Wahlberg}
\address{Dipartimento di Scienze Matematiche, Politecnico di Torino, Corso Duca degli Abruzzi 24, 10129 Torino, Italy}
\email{patrik.wahlberg[A]polito.it}

\subjclass[2020]
{46F05, 46F12, 35A27, 47G30, 35S05, 35A18, 81S30, 58J47, 47D06, 35J10}



\keywords{Anharmonic oscillators, global wave front sets, filter of singularities, tempered distributions, anisotropic symbols, microlocal analysis, evolution equations, Gelfand--Shilov spaces}


\begin{abstract} 
We consider evolution equations for two classes of generalized anharmonic oscillators and the associated initial value problem in the space of tempered distributions. 
We prove that the Cauchy problem is well posed in anisotropic Shubin--Sobolev modulation spaces of Hilbert type, 
and we investigate propagation of suitable notions of singularities.
\end{abstract}

\maketitle

\section{Introduction and statement of the results}\label{sec:intro}

The main goal of this paper is to study propagation of microlocal singularities for the Cauchy problem 
\begin{equation}\label{eq:anharmonicCP}
\begin{cases} 
\partial_t u + i \left( |x|^{2k} + \left( - \Delta \right)^m \right) u = 0 \\ 
u(0,x) = u_0(x) 
\end{cases}, \qquad (t,x) \in \ro \times \rr d, \quad k,m \in \no \setminus 0.
\end{equation}	
In Quantum Mechanics this is usually called Schr\"odinger anharmonic oscillator when $m = 1$. 
Under the addition of a quadratic term in the potential it can be regarded as a perturbation of the standard harmonic oscillator 
for small values of the potential.

We shall also consider more general problems than 
\eqref{eq:anharmonicCP}
of the form 
\begin{equation} \label{eq:SchrCP}
\begin{cases} 
\partial_t u + i A u=0 \\ 
u(0,x) = u_0(x) 
\end{cases}, \qquad (t,x) \in \ro \times \rr d, 
\end{equation}
where $A$ is a partial differential operator with polynomial coefficients of form
\begin{equation}\label{eq:diffoperator}
A = \sum\limits_{k|\alpha| +m|\beta| \leqs km} c_{\alpha\beta} x^\beta D_x^\alpha, \quad D^\alpha=(-i)^{|\alpha|}\partial_x^\alpha, 
\quad m,k \in 2 \no, \quad c_{\alpha\beta} \in \co. 
\end{equation}
As a generalization of \eqref{eq:anharmonicCP} in a different direction we consider finally \eqref{eq:SchrCP}
when $A$ has the Weyl symbol $a(x,\xi) = ( |x|^{2k} + |\xi|^{2m} )^p$
with $k,m \in \no \setminus 0$ and $p \in \ro \setminus 0$.

The literature on quantum anharmonic oscillators is enormous. The stationary equation $A u=0$ 
has been studied in thousands of titles, see for example \cite{Bambusi1, Bambusi2, BPV, Voros, Turbiner, CDR1, CDR2} and the references quoted therein. 
In this paper we refer in particular to \cite{CGPR, CGR2, Nicola1}. 
Helffer and Robert \cite{Helffer1} have developed Fourier integral operator parametrices to \eqref{eq:SchrCP}
when $A$ is an unbounded essentially self-adjoint elliptic operator
with a symbol that can be expanded in anisotropically homogeneous terms. 
Several contributions appeared recently, 
in particular for quadratic Hamiltonians, i.e. $k = m = 2$ in \eqref{eq:diffoperator}, 
see \cite{CaWa, CGNR, CNR, CRbook, Cordero1, Nakamura1, Nicola2, PravdaStarov1}. 

The first question for \eqref{eq:anharmonicCP} and \eqref{eq:SchrCP} is well-posedness in suitable function spaces. 
Natural choices are the Schwartz space $\cS(\rr d)$, the tempered distributions $\cS'(\rr d)$, 
and intermediate scales of global Sobolev type spaces. 
Such a scale of spaces can be defined using the short-time Fourier transform of a function or tempered distribution $u$:
\begin{equation*}
V_\varphi u(x,\xi) = (2\pi)^{-\frac{d}{2}} \int_{\rr d} e^{-i\langle y,\xi \rangle} u(y) \overline{\fy(y-x)}\, \dd y 
\end{equation*}
for a window function $\fy \in \cS(\rr d) \setminus 0$. 
We define $M_{\sigma,s}(\rr d)$ for anisotropy parameter $\sigma > 0$ and regularity parameter $s \in \ro$ 
as the space of all $u \in \cS'(\rr d)$ such that
\begin{equation*}
\| u \|_{M_{\sigma,s}} = \left( \iint_{\rr {2d}} |V_\fy u (x,\xi)|^2 \theta_\sigma(x,\xi)^{2s} \, \dd x \dd \xi \right)^{\frac12}  < \infty,
\end{equation*} 
where $\theta_\sigma(x,\xi) = 1+|x|+|\xi|^{\frac1\sigma}$ is an anisotropic weight on phase space $T^* \rr d$. 
These spaces are modulation Hilbert spaces and anisotropic versions of Shubin's spaces. 

In Section \ref{sec:wellposed} we prove the following well-posedness result for 
\eqref{eq:SchrCP} under the assumption that $A$ in \eqref{eq:diffoperator} is elliptic, in an anisotropic global sense, and positive. 
The involved function spaces are $M_{\sigma,s}(\rr d)$ with $\sigma= \frac{k}{m}$. 
Given a function or distribution space $X$ and an interval $I \subseteq \ro$, 
we denote here by $\cB^n (I, X)$ the space of all functions $f: I  \to X$ 
such that $f^{(j)} \in L^\infty(I, X) \cap C(I,X)$ for all $j = 0, 1, \dots, n$.

\begin{theorem}\label{thm:wptheorem1}
Let $A$ be a positive operator of the form \eqref{eq:diffoperator} such that 
\begin{equation}\label{eq:ellipticity}
a_k(x,\xi) := \sum\limits_{k|\alpha|+m|\beta|=km} c_{\alpha\beta}x^\beta\xi^\alpha
\neq 0 \quad \mbox{for} \quad (x,\xi)\neq (0,0).
\end{equation}
Then for every $s \in \ro$ and $u_0 \in M_{\sigma,s}(\rr d)$ there exists a unique solution $u= u(t,\cdot) \in \cB (\ro, M_{\sigma,s}(\rr d)) \cap \cB^1 (\ro, M_{\sigma,s - k}(\rr d))$ of \eqref{eq:SchrCP}. 
If $s = 0$ then $\| u(t,\cdot) \|_{L^2} = \| u_0 \|_{L^2}$ for all $t \in \ro$. 
\end{theorem}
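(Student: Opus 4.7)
The plan is to build the solution as the orbit $u(t) = e^{-itA} u_0$ of a strongly continuous unitary group generated by $A$, and then to propagate boundedness from $L^2 = M_{\sigma,0}$ to each space $M_{\sigma,s}$ via a functional calculus argument.

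First, I would view $A$ as an unbounded operator on $L^2(\rr d)$ initially defined on $\cS(\rr d)$. The positivity of $A$ together with the anisotropic global ellipticity \eqref{eq:ellipticity} implies, via the anisotropic Shubin calculus with weight $\theta_\sigma^k$ (which matches the polynomial order of $A$, since $\theta_\sigma^k \sim 1+|x|^k+|\xi|^m$), that $A$ is essentially self-adjoint and non-negative on $L^2(\rr d)$. Stone's theorem then furnishes a strongly continuous one-parameter unitary group $U(t)=e^{-itA}$, which both yields the conservation $\|u(t,\cdot)\|_{L^2}=\|u_0\|_{L^2}$ in the case $s=0$ and reduces existence for general $s$ to showing that $U(t)$ preserves $M_{\sigma,s}(\rr d)$.

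The central step is to characterize $M_{\sigma,s}$ intrinsically through $A$. Within the anisotropic Shubin pseudodifferential calculus associated with $\theta_\sigma$, the operator $I+A$ admits an elliptic parametrix, and its spectrally defined fractional powers $(I+A)^{s/k}$ are pseudodifferential operators with Weyl symbols equivalent to $\theta_\sigma^s$. From this one deduces the norm equivalence
\[
\|u\|_{M_{\sigma,s}} \asymp \|(I+A)^{s/k} u\|_{L^2}, \qquad u \in \cS'(\rr d),
\]
first for $s \geqs 0$ on the dense subspace $\cS(\rr d)$, then extending to all $s \in \ro$ by duality. Since $(I+A)^{s/k}$ commutes with $U(t)$ by the spectral theorem, the equivalence propagates the isometric action of $U(t)$ from $L^2$ to $M_{\sigma,s}$, giving uniform boundedness in $t$. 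Strong continuity in $t$ on $M_{\sigma,s}$ is inherited from strong continuity on $L^2$ combined with this uniform bound, so $u = U(\cdot)u_0 \in \cB(\ro, M_{\sigma,s}(\rr d))$.

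For the $\cB^1$ part, $A$ has Weyl symbol of $\theta_\sigma$-order $k$, hence maps $M_{\sigma,s}(\rr d)$ continuously into $M_{\sigma,s-k}(\rr d)$, so the equation $\partial_t u = -iAu$ places $\partial_t u$ in $\cB(\ro, M_{\sigma,s-k}(\rr d))$. Uniqueness follows from the uniqueness assertion in Stone's theorem, or equivalently from an energy estimate exploiting self-adjointness. The main technical obstacle is the norm equivalence $\|u\|_{M_{\sigma,s}} \asymp \|(I+A)^{s/k} u\|_{L^2}$ uniformly in $s$: this demands a workable anisotropic Shubin symbolic calculus adapted to $\theta_\sigma$, in particular parametrix construction for anisotropically elliptic symbols and the symbolic comparability of complex powers of $I+A$ with the weighted STFT norms defining $M_{\sigma,s}$.
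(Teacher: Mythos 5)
Your proposal is correct and takes essentially the same route as the paper: the norm equivalence $\|u\|_{M_{\sigma,s}} \asymp \|A^{s/k} u\|_{L^2}$ that you flag as the main technical step is exactly the paper's Proposition \ref{prop:modspacenorm}, proved there by the same mechanism (fractional powers of the elliptic self-adjoint realization of $A$ within the Nicola--Rodino calculus). The only cosmetic difference is that the paper writes the propagator concretely as the eigenfunction series $u(t,\cdot)=\sum_j c_j e^{-i\lambda_j t}\fy_j$ and checks the $\cB$/$\cB^1$ claims via \eqref{eq:charmodsp} and dominated convergence, rather than invoking Stone's theorem abstractly; this concrete form is then reused for Theorems \ref{thm:wptheorem2} and \ref{thm:wptheorem3}.
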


When we emphasize the dependence of $u$ on $u_0$ we write $u = u(t,\cdot) = \cK_t u_0$
where $\cK_t$ is the solution operator or propagator. 
As a consequence of Theorem \ref{thm:wptheorem1} we get well-posedness in $\cS(\rr d)$ and in $\cS'(\rr d)$, see
Corollary \ref{cor:wellposedness}. 

The polynomial $a_k$ is the principal symbol of $A$ considered as an anisotropic Shubin symbol, 
and condition \eqref{eq:ellipticity} expresses an assumption of anisotropic ellipticity. 

Another functional setting we study for well-posedness is the Gelfand--Shilov spaces of Roumieu and Beurling type. 
Given $\mu >0,\nu>0$ with $\mu+\nu \geqs 1,$ ($\mu+\nu > 1$), a smooth function $f$ belongs to $\mathcal{S}_\nu^\mu(\rr d)$ (respectively $\Sigma_\nu^\mu(\rr d)$) if
\begin{equation*}
\sup_{x \in \rr d} |x^\alpha \partial^\beta f(x)| \leqs C h^{|\alpha+\beta|}\alpha!^\nu \beta!^\mu, \quad \alpha, \beta \in \nn d, 
\end{equation*}
for some $C > 0$ and $h > 0$
(respectively for every $h > 0$ and for some $C > 0$ depending on $h$).

Gelfand--Shilov spaces were introduced as subspaces of the Schwartz space for the study of partial differential equations  \cite{Gelfand1}. These functions are characterized by Gevrey or analytic regularity and exponential decay at infinity. 
One motivation for considering Gelfand--Shilov spaces comes from the stationary case 
of elliptic anharmonic oscillators
where they give more precise information 
than the Schwartz space, 
both on the regularity and on the behavior at infinity of the solutions. 

For instance, it is known \cite{CGR2} that under the assumption of anisotropic ellipticity \eqref{eq:ellipticity},
the solutions of the equation $Au=0$ with $A$ as in \eqref{eq:diffoperator} and assumed positive, belong to 
$\mathcal{S}_{\frac{m}{k+m}}^{\frac{k}{k+m}}(\rr d)$. The same conclusion holds for the eigenfunctions of $A$.

Concerning the evolution equation \eqref{eq:SchrCP} we prove in this paper well-posedness in
$\mathcal{S}_\nu^\mu(\rr d)$ and in $\Sigma_\nu^\mu(\rr d)$
when 
\begin{equation}\label{eq:muenu}
\mu= \frac{kr}{k+m}, \qquad \nu= \frac{mr}{k+m}, 
\end{equation}
with $r \geqs 1$ for $\mathcal{S}_\nu^\mu(\rr d)$, and $r >1$ for $\Sigma_\nu^\mu(\rr d)$.

\begin{theorem}\label{thm:wptheorem2}
Let $A$ be a positive differential operator of the form \eqref{eq:diffoperator} satisfying \eqref{eq:ellipticity}, 
and define $\mu$ and $\nu$ by \eqref{eq:muenu}. 
If $r \geqs 1$ then for every $u_0 \in \mathcal{S}_\nu^\mu(\rr d)$ the problem \eqref{eq:SchrCP} admits a unique solution $u \in \mathcal{B}^1(\ro, \mathcal{S}_\nu^\mu(\rr d))$.  
The same result holds replacing $\mathcal{S}_\nu^\mu(\rr d)$ by $\Sigma_\nu^\mu(\rr d)$ for $r >1$.
\end{theorem}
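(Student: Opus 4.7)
The plan is to reduce the problem to an iterate characterization of the Gelfand--Shilov spaces tailored to $A$. The crucial input is the following equivalence: if $A$ is a positive self-adjoint realization of \eqref{eq:diffoperator} satisfying \eqref{eq:ellipticity} and $\mu,\nu$ are defined by \eqref{eq:muenu} with $r = \mu + \nu$, then $u \in L^2(\rr d)$ belongs to $\mathcal{S}_\nu^\mu(\rr d)$ if and only if there exist $C, h > 0$ such that $\| A^N u \|_{L^2(\rr d)} \leqs C h^N (N!)^r$ for every $N \in \no$, and $u \in \Sigma_\nu^\mu(\rr d)$ (for $r > 1$) if and only if the same estimate holds for every $h > 0$ with $C$ depending on $h$. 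This equivalence is in the spirit of the Komatsu theory of iterates: combining the classical Gelfand--Shilov characterization by anisotropic symbol estimates with the fact that the anisotropic ellipticity \eqref{eq:ellipticity} makes $\|A^N u\|_{L^2}$ comparable to the Shubin--Sobolev norm of anisotropic order $Nkm$, one obtains the stated equivalence. Alternatively, one may derive it from an eigenfunction expansion using the spectral decomposition of $A$ constructed in the proof of Theorem \ref{thm:wptheorem1}.

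Granted this characterization, the rest is an application of the spectral theorem. By positivity and anisotropic ellipticity, $A$ has a unique self-adjoint realization on $L^2(\rr d)$, so the propagator $\cK_t = e^{-itA}$ is a one-parameter unitary group commuting with every power of $A$. Hence, for $u_0 \in \mathcal{S}_\nu^\mu(\rr d)$,
$\| A^N \cK_t u_0 \|_{L^2} = \| \cK_t A^N u_0 \|_{L^2} = \| A^N u_0 \|_{L^2} \leqs C h^N (N!)^r$
uniformly in $t \in \ro$, so $u(t,\cdot) = \cK_t u_0 \in \mathcal{S}_\nu^\mu(\rr d)$ with norm bound independent of $t$. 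Since $A u_0$ also lies in $\mathcal{S}_\nu^\mu(\rr d)$ by the same characterization (shifting the iterate index by one), the identity $\partial_t u = -i A \cK_t u_0$ yields $\partial_t u \in \mathcal{S}_\nu^\mu(\rr d)$ uniformly in $t$, hence $u \in \cB^1(\ro, \mathcal{S}_\nu^\mu(\rr d))$. Continuity in $t$ with values in $\mathcal{S}_\nu^\mu(\rr d)$ can be upgraded from the strong $L^2$ continuity of $\cK_t$ by a dominated-convergence-type argument: for $h' > h$ the ratio $\|A^N(\cK_{t+\ep} - \cK_t)u_0\|_{L^2}/(h'^N N!^r)$ is dominated by $2C(h/h')^N$, vanishing as $N \to \infty$, while each individual term goes to zero as $\ep \to 0$ by strong continuity of $\cK_t$. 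Uniqueness is inherited from the $L^2$ uniqueness of Theorem \ref{thm:wptheorem1}, since $\mathcal{S}_\nu^\mu(\rr d) \hookrightarrow L^2(\rr d)$. The Beurling case is identical after replacing ``for some $h > 0$'' by ``for every $h > 0$''.

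The main technical obstacle is the iterate characterization of $\mathcal{S}_\nu^\mu(\rr d)$ and $\Sigma_\nu^\mu(\rr d)$ by $\|A^N u\|_{L^2}$ for the specific anisotropic elliptic operator in \eqref{eq:diffoperator}. This requires quantitative two-sided bounds between $A^N$ and the anisotropic Shubin--Sobolev scale of order $Nkm$ that is compatible with Theorem \ref{thm:wptheorem1}, and in the Beurling case a careful tracking of the constants as $h \downarrow 0$; once this linkage is established, the remainder of the proof is soft functional analysis driven by unitarity of $\cK_t$.
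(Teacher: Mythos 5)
Your strategy---reduce to a spectral characterization of $\mathcal{S}_\nu^\mu(\rr d)$ adapted to $A$ and exploit unitarity of $\cK_t = e^{-itA}$---is structurally parallel to the paper's proof, which instead works directly with the eigenfunction coefficient decay $\sup_j |u_j|^2 e^{\alpha\lambda_j^\rho} < \infty$, $\rho = \frac{k+m}{kmr}$, from Proposition~\ref{prop:GSseminorm} (cited from \cite{CGPR}). The soft part of your argument (unitarity preserves the iterate bound, $\partial_t u = -iAu$, continuity by a dominated-convergence comparison, uniqueness inherited from $L^2$) is sound. However, the iterate exponent you propose is wrong. Converting the coefficient-decay condition $|u_j| \lesssim e^{-\alpha\lambda_j^\rho/2}$ into an iterate bound via $\|A^N u\|_{L^2}^2 = \sum_j \lambda_j^{2N}|u_j|^2$ and a Stirling estimate yields $\|A^N u\|_{L^2} \leqs C h^N (N!)^{1/\rho}$ with $1/\rho = \frac{kmr}{k+m}$, not $(N!)^r$ with $r = \mu+\nu$ as you claim. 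These exponents agree only when $km = k+m$, that is $k=m=2$ (the harmonic oscillator); already for $k=4$, $m=2$, $r=1$ one has $1/\rho = 4/3 \neq 1 = r$, so the space $\mathcal{S}_{1/3}^{2/3}(\rr d)$ is characterized by $\|A^N u\|_{L^2}\leqs Ch^N(N!)^{4/3}$, not $(N!)^1$. With the exponent corrected to $kmr/(k+m)$ (and the analogous correction in the Beurling case) the remainder of your argument is essentially the paper's proof recast in iterate form; as written, the characterization you invoke is false and would place the solution in the wrong Gelfand--Shilov class.
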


Beyond the results on well-posedness of evolution equations of the form \eqref{eq:SchrCP}, 
our main interest in this paper is how the solution propagates singularities. 
We follow H\"ormander's ideas of microlocal analysis, adopting suitable notions of wave front sets, and fix attention on large values of space and frequency variables. 
Concerning the history of microlocal analysis, 
the classical results for hyperbolic equations, cf. \cite{CHvol2}, were not able to determine the correct direction of propagation, among the different characteristic curves passing through a singularity point. 
H\"ormander \cite{Hormander1} finally solved the mystery by defining the wave front set: the microsingularities propagate exactly along the bicharacteristics passing through a point in the phase space. 

More recently H\"ormander's approach has been applied to Schr\"odinger equations, the microlocal propagation being determined by the Hamilton flow of the principal symbol of the Hamiltonian. 
Different variants of wave front sets were used, in particular for bounded potentials, cf. \cite{CKS, Hassell1, Ito1, Ito2, Wunsch1}. In the case of quadratic Hamiltonians the correct definition was already proposed by H\"ormander \cite{Hormander3}, afterwards reconsidered by several authors under different names and notations, see \cite{Nakamura1, RW1}. 

In our results on propagation of singularities for \eqref{eq:SchrCP}, the operator $A = a^w(x,D)$ will be the Weyl quantization of a symbol of the form 
\begin{equation}\label{eq:hamiltoniansymbol}
a(x,\xi) = \left( |x|^{2k} + |\xi|^{2m} \right)^p, \quad x, \xi \in \rr d, 
\end{equation}
where $k,m \in \no \setminus 0$ and $p \in \ro \setminus 0$. 
This generalizes the Hamiltonian in \eqref{eq:anharmonicCP} which equals \eqref{eq:hamiltoniansymbol} with $p = 1$. 

\begin{remark}\label{rem:cutoffsmooth}
If $p \in \no \setminus 0$ then $a \in C^\infty(\rr {2d})$. 
For $p \in \ro \setminus \no$ it is not smooth in the origin and then we multiply with a cutoff function around the origin
in order to always have $a \in C^\infty(\rr {2d})$. 
More precisely for $\delta > 0$ we define
$\psi_\delta(x,\xi) = \chi(|x|^2 + |\xi|^2) \in C^\infty(\rr {2d})$ where $\chi \in C^{\infty}(\ro)$,
$0 \leqs \chi \leqs 1$, $\chi(t) = 0$ for $t \leqs \frac{\delta^2}{4}$ and $\chi(t) = 1$ for $t \geqs \delta^2$
for a given $\delta > 0$. 
Thus 
$a(x,\xi) = \psi_\delta (x,\xi) \left( |x|^{2k} + |\xi|^{2m} \right)^p \in C^\infty(\rr {2d})$ if $p \in \ro \setminus \no$. 
\end{remark}


Inspired by H\"ormander's idea and the literature quoted above, we look for results for propagation of singularities along the Hamilton flow of the symbol $a$, 
that is solutions to 
\begin{equation} \label{HJintro}
\begin{cases} 
x'(t)  = \nabla_\xi a( x(t), \xi(t) ) \\ 
\xi'(t)  = -\nabla_x a( x(t), \xi(t) )  \\ 
x(0)=y  \\ 
\xi(0)=\eta
\end{cases}
\end{equation}
where $(y,\eta) \in T^* \rr d$. The Hamilton flow is denoted $( x(t), \xi(t) ) = \chi_t (y,\eta)$.

For the concept of singularities it is natural to adopt the $\sigma$-anisotropic Gabor wave front set $\WFgs(u)$ \cite[Definition~4.1]{RW2}
of a tempered distribution $u \in \cS'(\rr d)$,
defined with an anisotropy parameter $\sigma > 0$ as follows.
A point $z_0 = (x_0,\xi_0) \in T^* \rr d \setminus 0$ satisfies $z_0 \notin \WFgs ( u )$
if there exists an open set $U \subseteq T^* \rr d$ such that $z_0 \in U$ and 
\begin{equation*}
\sup_{(x,\xi) \in U, \ \lambda > 0} \lambda^N |V_\fy u (\lambda x, \lambda^\sigma \xi)| < + \infty \quad \forall N \geqs 0. 
\end{equation*}
The condition $(x,\xi) \in \WFgs ( u )$ thus means a lack of superpolynomial decay of the short-time Fourier transform 
along curves of the form
\begin{equation}\label{eq:sigmaconiccurve}
\ro_+ \ni \lambda \mapsto (\lambda x, \lambda^\sigma \xi) \in T^* \rr d \setminus 0
\end{equation}
in a neighborhood of $(x,\xi) \in T^* \rr d \setminus 0$. 
The $\sigma$-anisotropic Gabor wave front set $\WFgs(u)$ is $\sigma$-conic in the sense of invariant on curves of type 
\eqref{eq:sigmaconiccurve}.

An important insight is that propagation of singularities depend crucially on the parameter $p \in \ro \setminus 0$
in \eqref{eq:hamiltoniansymbol}, 
with a critical value 
\begin{equation}\label{eq:pcrit}
p_c = \frac12 \left( \frac1k + \frac1m \right). 
\end{equation}
Note that $p_c \leqs 1$ with equality only if $k = m = 1$, which is the well known case of the harmonic oscillator. 
To study propagation of singularities for \eqref{eq:anharmonicCP} with either $k > 1$ or $m > 1$ the parameter must be supercritical, that is $p > p_c$. 
We treat the cases $p \leqs p_c$ and $p > p_c$ using different notions of singularities.
Our theorem on the propagation of $\WFgs$ concerns $p \leqs p_c$. 

\begin{theorem}\label{thm:propanisogabor}
Suppose $k, m \in \no \setminus 0$, $\sigma = \frac{k}{m}$, $0 \neq p \leqs p_c$, 
let the symbol $a$ be defined by \eqref{eq:hamiltoniansymbol} and Remark \ref{rem:cutoffsmooth}, 
and consider the Cauchy problem \eqref{eq:SchrCP} with $A = a^w(x,D)$. 
If $u_0 \in \cS'(\rr d)$ then there exists a unique solution $u=u(t,\cdot) \in \cS'(\rr d)$ of \eqref{eq:SchrCP} for all $t \in \ro$.  
If $\chi_t: \rr {2d} \setminus 0 \to \rr {2d} \setminus 0$ is the Hamilton flow corresponding to $a$,  
then we have if $p = p_c$
\begin{equation}\label{eq:propcritical}
\WFgs ( u(t,\cdot) ) = \chi_t \left( \WFgs u_0 \right), \quad t \in \ro, 
\end{equation}
and if $p < p_c$
\begin{equation}\label{eq:propsubcritical}
\WFgs ( u (t,\cdot) ) = \WFgs ( u_0 ), \quad t \in \ro. 
\end{equation}
\end{theorem}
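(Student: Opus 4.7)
The plan is to run H\"ormander's microlocal propagation scheme in a $\sigma$-anisotropic Shubin pseudodifferential calculus adapted to the weight $\theta_\sigma$. The ingredients are: well-posedness of \eqref{eq:SchrCP} in $\cS'(\rr d)$; a microlocal characterization of $\WFgs$ via anisotropic Weyl operators; an Egorov theorem for $\cK_t=e^{-itA}$; and a transport argument along the Hamilton flow $\chi_t$. For well-posedness I would split on $p$: if $p\in\no\setminus 0$, the symbol $a$ is a polynomial and $A=a^w(x,D)$ is a positive operator of the form \eqref{eq:diffoperator} with principal part $a_k=a$ satisfying \eqref{eq:ellipticity}, so Theorem \ref{thm:wptheorem1} applied in each $M_{\sigma,s}(\rr d)$ yields a solution in $\cS'(\rr d)=\bigcup_s M_{\sigma,s}(\rr d)$; if $p\in\ro\setminus\no$, the cutoff of Remark \ref{rem:cutoffsmooth} places $a$ in a self-adjoint $\sigma$-anisotropic Shubin class of order $2kp\leqs 1+\sigma$, and the spectral theorem together with continuity on each $M_{\sigma,s}(\rr d)$ extends $\cK_t$ to $\cS(\rr d)$ and, by duality, to $\cS'(\rr d)$.

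The key analytic step is an Egorov theorem: for every symbol $b$ of any order in the $\sigma$-anisotropic Shubin calculus,
\begin{equation*}
\cK_{-t}\, b^w(x,D)\, \cK_t \;=\; (b\circ \chi_t)^w(x,D) + R_t,
\end{equation*}
with $R_t$ smoothing. The proof is the standard one: setting $C(t)=\cK_{-t}b^w\cK_t-(b\circ\chi_t)^w$ one has $C(0)=0$ and $\partial_t C = i[A,C] + E_t$, where $E_t=i[A,(b\circ\chi_t)^w]-(H_a(b\circ\chi_t))^w$ is the discrepancy between the Moyal and Poisson brackets of $a$ and $b\circ \chi_t$. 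The hypothesis $p\leqs p_c$, equivalently $2kp\leqs 1+\sigma$, is decisive: it forces $H_a$ to preserve each anisotropic symbol class while making $E_t$ of strictly lower order, so that a Duhamel iteration and an asymptotic sum produce the smoothing $R_t$.

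Granted Egorov, propagation is a short bootstrap. Given $z_0\notin\WFgs(u_0)$, choose $b$ of order $0$ with $b\equiv 1$ on a $\sigma$-conic neighborhood of $z_0$ and $b^w u_0\in\cS(\rr d)$, and set $\tilde b=b\circ \chi_{-t}$, which is non-characteristic at $\chi_t z_0$. Since $\tilde b\circ \chi_t=b$, Egorov rewrites as $\tilde b^w\cK_t=\cK_t b^w+\cK_t R_t$, so $\tilde b^w u(t)\in\cS(\rr d)$ and $\chi_t z_0\notin\WFgs(u(t))$. Reversing time yields the opposite inclusion and proves \eqref{eq:propcritical} for $p=p_c$. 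In the subcritical case $p<p_c$ a direct estimate on \eqref{HJintro} gives $|x'(t)|\lesssim(|x|^{2k}+|\xi|^{2m})^{p-\frac{1}{2m}}$ and $|\xi'(t)|\lesssim(|x|^{2k}+|\xi|^{2m})^{p-\frac{1}{2k}}$, both negligible compared with the anisotropic scaling $(\lambda,\lambda^\sigma)$ as $\lambda\to\infty$; consequently $\chi_t$ acts trivially on $\WFgs$ at infinity and \eqref{eq:propcritical} collapses to \eqref{eq:propsubcritical}.

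The chief obstacle I foresee is the Egorov construction at the critical value $p=p_c$: the commutator $[A,b^w]$ then has exactly the same $\sigma$-order as $b^w$, so the symbolic iteration is borderline, and one must control the Moyal correction terms and the pullbacks $b\circ\chi_s$ uniformly in the anisotropic calculus on compact time intervals in order to extract a genuinely smoothing remainder.
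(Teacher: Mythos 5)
For $p=p_c$ your outline matches the strategy behind \cite[Theorem~8.3]{CRW}, which the paper invokes directly, and Corollary~\ref{cor:symbolflowsymbol} is precisely the statement ($q\circ\chi_{-t}\in G^{0,\sigma}$ when $p\leqs p_c$) that your Egorov construction would need; your assessment of where the difficulty lies at the critical exponent is consistent with Remark~\ref{rem:compflow}.

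For $p<p_c$ you take a genuinely different and longer route than the paper, and as stated it has a gap. The paper proves \eqref{eq:propsubcritical} with no Egorov theorem and without the Hamilton flow at all: since $a\in G^{2kp,\sigma}=G^{1+\sigma-\alpha,\sigma}$ with $\alpha=2k(p_c-p)>0$, the commutator symbol $i(a\wpr q-q\wpr a)$ lies in $G^{-\alpha,\sigma}$, so one regards $v=q^w u$ as a solution of the inhomogeneous problem $Pv=[iA,q^w]u$ with $v(0)=q^w u_0\in\cS(\rr d)$, gains $\alpha$ units of $M_{\sigma,s}$-regularity per step, and iterates with shrinking $\sigma$-conic cutoffs until $q^w u(t,\cdot)\in\cS(\rr d)$. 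Your route instead aims to establish \eqref{eq:propcritical} for all $p\leqs p_c$ via Egorov and then claim it \emph{collapses} to \eqref{eq:propsubcritical} because $\chi_t$ is asymptotically trivial. But \eqref{eq:propcritical} cannot hold verbatim for $p<p_c$: $\WFgs(u(t,\cdot))$ is always $\sigma$-conic whereas $\chi_t(\WFgs u_0)$ is not, because $\chi_t$ no longer commutes with the anisotropic dilations when $p<p_c$ (cf.\ Remark~\ref{rem:hamiltonflow2}). What your Egorov argument would actually need to show is that for every $z_0\notin\WFgs u_0$ and every $\sigma$-conic $\Gamma\ni z_0$, the set $\chi_{-t}(\Gamma)$ contains a smaller $\sigma$-conic neighborhood of $z_0$ \emph{itself} (not of $\chi_t z_0$) outside a compact ball, so that $q\circ\chi_{-t}$ is non-characteristic at $z_0$ and one concludes $z_0\notin\WFgs(u(t,\cdot))$ directly. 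That claim is plausible given your velocity estimates and the period growth $T\asymp(y^{2k}+\eta^{2m})^{p_c-p}$, but it is not proved; moreover the explicit flow formulas that would substantiate it are only worked out in Section~\ref{sec:Hamiltonflow} for $d=1$.

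A further small gap is in the well-posedness sketch: you invoke Theorem~\ref{thm:wptheorem1} for $p\in\no\setminus 0$, but that theorem requires $A$ to be a positive operator, and positivity of $a^w(x,D)$ with $a=(|x|^{2k}+|\xi|^{2m})^p$ is not evident for $p\geqs 2$ (the cross terms in the expansion do not Weyl-quantize to obviously nonnegative operators). The spectral-theorem route you propose for $p\notin\no$ is likewise left undeveloped. The paper avoids both issues by citing \cite[Corollary~7.12]{CRW} for the existence and uniqueness of the solution in $\cS'(\rr d)$.
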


In fact the propagation result \eqref{eq:propcritical} when $p = p_c$  
is a particular case of \cite[Theorem~8.3]{CRW}. 
The new result \eqref{eq:propsubcritical} shows that when the exponent  
is subcritical ($p < p_c$) then the anisotropic Gabor wave front set does not propagate
but remains invariant. 

In the supercritical case $p > p_c$ 
there is evidence that $\WFgs$ is not the correct notion of singularities for a propagation result, 
see \cite[Remark~5.6 and Section~6]{CRW}. 
For $d = 1$ we work out the explicit, periodic Hamilton flow for the symbol \eqref{eq:hamiltoniansymbol} 
in Section \ref{sec:Hamiltonflow}. 
The results give further evidence against using $\WFgs$ in the supercritical case.

Thus the wave front set $\WFgs$ is not suitable to handle propagation for the generalized anharmonic oscillator equation 
\eqref{eq:anharmonicCP} if $(k,m) \neq (1,1)$, since $p = 1 > p_c$. 
For \eqref{eq:anharmonicCP} we introduce instead the notion of a filter of singularities $\mathcal F(u)$ of 
$u \in \cS'(\rr d)$. The idea has origins e.g. in \cite{RodinoAnnPisa}.   
The filter of singularities consists of anisotropic annular subsets of $T^* \rr d$ of the form 
\begin{equation}\label{eq:anisoradial}
\wt \Sigma = \{(x,\xi) \in T^* \rr d: |x|^{2k} + |\xi|^{2m} \in \Sigma\} \subseteq T^* \rr d
\end{equation}
where $\Sigma \subseteq \ro_+$. 
The subsets in the filter are the complements of anisotropic annular sets $\wt \Sigma \subseteq T^* \rr d$ 
where $u$ is smooth in the following sense: 
There exists $r \in \ro$ and a symbol $a \in G^{r,\sigma}$ (an anisotropic Shubin symbol) with $\sigma = \frac{k}{m}$, 
such that $a^w(x,D) u \in \cS(\rr d)$
and the symbol $a$ satisfies 
\begin{equation*}
|a (x,\xi)| \geqs C \left( 1 + |x| + |\xi|^{\frac1\sigma} \right)^r, \quad (x,\xi) \in \wt \Sigma, \quad |(x,\xi)| \geqs R, 
\end{equation*}
with $C, R > 0$. 
Note that the Hamilton flow is invariant on sets of the form \eqref{eq:anisoradial}. 
In the filter of singularities anisotropic annular sets of the form \eqref{eq:anisoradial}
replaces $\sigma$-conic subsets that are used in $\WFgs$. 

Our result for the invariance of the filter of singularities for the solutions to \eqref{eq:anharmonicCP} 
reads as follows.

\begin{theorem}\label{thm:propagationannular}
Suppose $k,m \in \no \setminus 0$ satisfy
\begin{equation}\label{eq:kmcondition} 
\frac1k + \frac1m > \frac23
\end{equation}
and let $u = u(t,\cdot)$ be the solution of \eqref{eq:anharmonicCP}
with $u_0 \in \cS'(\rr d)$, granted by Theorem \ref{thm:wptheorem1}. 
Then $\mathcal{F}(u(t,\cdot)) =\mathcal{F}(u_0)$ for every $t \in \ro$.
\end{theorem}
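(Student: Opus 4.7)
My plan is to combine time-reversal symmetry with an Egorov-type theorem in the anisotropic Shubin calculus $G^{r,\sigma}$ with $\sigma = k/m$. Replacing $t$ with $-t$ reduces the claim to the single inclusion $\mathcal{F}(u_0) \subseteq \mathcal{F}(u(t,\cdot))$ for every $t \in \ro$. I would start from $\wt \Sigma^{c} \in \mathcal{F}(u_0)$ and pick a witness symbol $a \in G^{r,\sigma}$ such that $a^w u_0 \in \cS(\rr d)$ and $|a(x,\xi)| \geqs C\, \theta_\sigma(x,\xi)^r$ on $\wt \Sigma$ at infinity. Writing $H = h^w(x,D)$ for $h(x,\xi) = |x|^{2k} + |\xi|^{2m}$ and setting $B(t) := e^{-itH} a^w e^{itH}$, the immediate identity
\begin{equation*}
B(t) u(t,\cdot) = e^{-itH} a^w u_0 \in \cS(\rr d)
\end{equation*}
follows from Theorem \ref{thm:wptheorem1} applied at every $s$, which ensures that the propagator preserves $\cS(\rr d) = \bigcap_s M_{\sigma,s}(\rr d)$. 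The remaining task is to identify $B(t)$ as a Weyl quantization $b(t,\cdot)^w$ with $b(t,\cdot) \in G^{r,\sigma}$ and anisotropically elliptic on $\wt \Sigma$ at infinity; this $b(t,\cdot)$ will be the required witness showing $\wt \Sigma^{c} \in \mathcal{F}(u(t,\cdot))$.

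The natural candidate for the principal part of $b(t,\cdot)$ is $a \circ \chi_t$, with $\chi_t$ the Hamilton flow of $h$. Because $h$ is conserved along $\chi_t$, the anisotropic annular sets $\wt \Sigma$ of \eqref{eq:anisoradial} are $\chi_t$-invariant, and the weight $\theta_\sigma \sim h^{1/(2k)}$ is preserved along orbits up to constants, so $a \circ \chi_t$ inherits the ellipticity of $a$ on $\wt \Sigma$ at infinity. To compare $B(t)$ with $(a \circ \chi_t)^w$ I would expand the Heisenberg equation $\dot B = i[H, B]$ at the symbol level into the Moyal transport equation whose leading part is the Poisson bracket $\{h, b_t\}$ (driving the Hamilton flow) and whose higher corrections (only odd orders survive in the commutator since $h$ and $b_t$ enter antisymmetrically) contribute symbols of anisotropic order
\begin{equation*}
2k + r - n(1 + \sigma) = r + \frac{2km - n(k+m)}{m}.
\end{equation*}
The first nontrivial correction ($n = 3$) is strictly subprincipal precisely when $2km < 3(k+m)$, i.e.\ exactly under \eqref{eq:kmcondition}, and the remaining odd $n \geqs 5$ give even lower order. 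A Borel-type asymptotic summation then produces $b(t,\cdot) \sim a \circ \chi_t + b_t^{(1)} + b_t^{(2)} + \cdots$ in $G^{r,\sigma}$ with remainder in $G^{-\infty,\sigma}$, and the ellipticity of $a \circ \chi_t$ on $\wt \Sigma$ transfers to $b(t,\cdot)$ since strictly lower order terms cannot spoil an anisotropic lower bound at infinity.

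The hardest part will be the rigorous implementation of this anisotropic Egorov theorem. In particular, showing $a \circ \chi_t \in G^{r,\sigma}$ requires polynomial control of $\chi_t$ and its linearized derivatives against $\theta_\sigma$, which I would derive from the conservation of $h$ together with the polynomial nature of its derivatives via the linearized Hamilton equations; and I would also need to justify the asymptotic summation so that the tail symbol is genuinely regularizing. The threshold \eqref{eq:kmcondition} enters sharply, being precisely what makes the first Moyal correction ($n=3$) subprincipal and, by the arithmetic of the order decrement, what drives every subsequent step of the iteration.
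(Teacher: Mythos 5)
Your plan is built around an anisotropic Egorov theorem: conjugate $a^w$ by the propagator, identify the result as a pseudodifferential operator $b(t,\cdot)^w$ with principal symbol $a\circ\chi_t$, and transfer ellipticity. The genuine gap is your claim that $a\circ\chi_t\in G^{r,\sigma}$. For the anharmonic oscillator the power is $p=1$, which is strictly \emph{supercritical}: $p>p_c=\tfrac12(\tfrac1k+\tfrac1m)$ whenever $(k,m)\neq(1,1)$. In this regime the Hamilton flow does \emph{not} preserve the class $G^{r,\sigma}$. This is precisely the content of Proposition~\ref{prop:compositionsymbolflow}, Corollary~\ref{cor:symbolflowsymbol} and Remark~\ref{rem:compflow}: the estimates on $\pdd y\alpha\pdd\eta\beta(q\circ\chi_t^{-1})$ carry an extra factor $\theta_\sigma^{2k(p-p_c)(\alpha+\beta)}$, which ruins the symbol estimates for $|\alpha|+|\beta|\geqs 1$ when $p>p_c$. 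Conservation of $h$ only controls the zeroth-order behaviour $\theta_\sigma(\chi_t(z))\asymp\theta_\sigma(z)$; it does not tame the derivatives of $\chi_t$ with respect to $(y,\eta)$, which is exactly what your ``linearized Hamilton equations'' step needs and cannot deliver. Hence the principal symbol $a\circ\chi_t$ you propose to build on is, for a general witness $a\in G^{r,\sigma}$, simply not a symbol, and the downstream Borel summation has no foundation.

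The missing idea, and the route the paper actually takes, is to \emph{avoid} the Egorov theorem altogether and to restrict to cutoffs of the very special form $q(x,\xi)=g(|x|^{2k}+|\xi|^{2m})$, i.e.\ smooth functions of the Hamiltonian itself. Proposition~\ref{prop:smoothannular} shows that such cutoffs suffice to detect membership in $\mathcal F(u_0)$. For these, $\{h,q\}=0$ exactly (Lemma~\ref{lem:Poisson}), so the leading Poisson-bracket term in the commutator $[A,q^w]$ vanishes \emph{identically}; there is no transported symbol to worry about. You correctly identified the threshold $2km<3(k+m)$ as making the $n=3$ Moyal term subprincipal, and the paper uses this: the commutator symbol lies in $G^{-\alpha,\sigma}$ with $\alpha=3(1+\sigma)-2k>0$. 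From there the paper runs a bootstrap — not an Egorov conjugation — by treating $q^w u$ as a solution of an inhomogeneous Cauchy problem $P(q^w u)=[iA,q^w]u$ and applying the well-posedness Theorem~\ref{thm:wptheorem3} in the $M_{\sigma,s}$-scale, shrinking the cutoff at each step, gaining $\alpha$ in regularity each time until $q^w u(t,\cdot)\in\cS(\rr d)$. Replacing the Egorov step with the $\{h,g(h)\}=0$ observation and the bootstrap in the $M_{\sigma,s}$ scale is what closes the argument.
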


The condition \eqref{eq:kmcondition} is satisfied for any $k \in \no \setminus 0$ if $m = 1$ 
which is the anharmonic oscillator case. 

\begin{remark}\label{rem:WFGS}
A natural open problem is to study propagation of singularities in the setting of tempered ultradistributions of Gelfand--Shilov type, 
corresponding to Gelfand--Shilov versions of Theorems \ref{thm:propanisogabor} and  \ref{thm:propagationannular}. 
There are notions of Gelfand--Shilov wave front sets which are anisotropic \cite{RW3}. 
We do not treat this problem in the current paper. 
\end{remark}

The paper is organized as follows. In Section \ref{sec:prelim} we briefly recall the definition of $\sigma$-anisotropic symbols introduced in \cite{RW2} and the basic results of the related pseudodifferential calculus and microlocal analysis. 
In Section \ref{sec:wellposed} we prove the well-posedness Theorems \ref{thm:wptheorem1} and \ref{thm:wptheorem2}. In Section \ref{sec:Hamiltonflow} we compute explicitly the solution to Hamilton's system \eqref{HJintro} for the Hamiltonian \eqref{eq:hamiltoniansymbol} in dimension $d=1$ and deduce estimates for the derivatives of the composition of an anisotropic symbol with the inverse Hamilton flow. The critical power $p_c$ \eqref{eq:pcrit} plays a crucial role. 
In Section \ref{sec:propanisogabor} we prove 
Theorem \ref{thm:propanisogabor} where
$p \leqs p_c$. 
The last two sections are devoted to the case $p = 1 > p_c$. In Section \ref{sec:filtersing} we introduce the filter of singularities and study its properties, 
and in Section \ref{sec:propannular} we prove Theorem \ref{thm:propagationannular}.

\section{Preliminaries}\label{sec:prelim}

The unit sphere in $\rr d$ is denoted $\sr {d-1} \subseteq \rr d$. 
An open ball of center $x_0 \in \rr d$ and radius $r > 0$ is denoted $\rB_r (x_0)$, 
and $\rB_r (0) = \rB_r$. 
The set of positive real (rational) numbers is denoted $\ro_+$ ($\qo_+$), and the power set of the set $\Omega$ is denoted $\rP(\Omega)$. 
We write $f (x) \lesssim g (x)$ provided there exists $C>0$ such that $f (x) \leqs C \, g(x)$ for all $x$ in the domain of $f$ and of $g$. 
If $f (x) \lesssim g (x) \lesssim f(x)$ then we write $f \asymp g$. 
The partial derivative $D_j = - i \partial_j$, $1 \leqs j \leqs d$, acts on functions and distributions on $\rr d$, 
with extension to multi-indices. 
If $I \subseteq \ro$ is an open interval, $k \in \no$ and $X$ is a topological vector space then $\cB^k (I, X)$ 
denotes the space of all functions $f: I  \to X$ 
such that $f^{(j)} \in L^\infty(I, X) \cap C(I,X)$ for all $j = 0, 1, \dots, k$, 
and 
\begin{equation*}
\cB^\infty(I) = \bigcap_{k \in \no} \cB^k (I, \ro). 
\end{equation*}
We use $\eabs{x} = (1 + |x|^2)^{\frac12}$ for $x \in \rr d$.
The Poisson bracket of $a(x,\xi), b(x,\xi) \in C^1(\rr {2d})$ is 
\begin{equation}\label{eq:Poissonbracket}
\{ a, b \} = \la \nabla_\xi a, \nabla_x b \ra - \la \nabla_x a, \nabla_\xi b \ra
\end{equation}
where $\la \cdot, \cdot \ra$ denotes the scalar product on $\rr d$. 

The normalization of the Fourier transform is
\begin{equation*}
 \cF f (\xi )= \widehat f(\xi ) = (2\pi )^{-\frac d2} \int _{\rr
{d}} f(x)e^{-i \la x,\xi \ra }\, \dd x, \qquad \xi \in \rr d, 
\end{equation*}
for $f\in \cS(\rr d)$ (the Schwartz space). 
The conjugate linear action of a tempered distribution $u \in \cS'(\rr d)$ on a test function $\phi \in \cS(\rr d)$ is written $(u,\phi)$, consistent with the $L^2$ inner product $(\cdot ,\cdot ) = (\cdot ,\cdot )_{L^2}$ which is conjugate linear in the second argument. 

Denote translation by $T_x f(y) = f( y-x )$ and modulation by $M_\xi f(y) = e^{i \la y, \xi \ra} f(y)$ 
for $x,y,\xi \in \rr d$ where $f$ is a function or distribution defined on $\rr d$. 
Let $\fy \in \cS(\rr d) \setminus \{0\}$. 
The short-time Fourier transform (STFT) of a tempered distribution $u \in \cS'(\rr d)$ is defined by 
\begin{equation*}
V_\fy u (x,\xi) = (2\pi )^{-\frac d2} (u, M_\xi T_x \fy) = \cF (u T_x \overline \fy)(\xi), \quad x,\xi \in \rr d. 
\end{equation*}
The function $V_\fy u$ is smooth and polynomially bounded \cite[Theorem~11.2.3]{Grochenig1} as
\begin{equation}\label{eq:STFTtempered}
|V_\fy u (x,\xi)| \lesssim \eabs{(x,\xi)}^{k}, \quad (x,\xi) \in T^* \rr d, 
\end{equation}
for some $k \geqs 0$. 
We have $u \in \cS(\rr d)$ if and only if
\begin{equation}\label{eq:STFTschwartz}
|V_\fy u (x,\xi)| \lesssim \eabs{(x,\xi)}^{-N}, \quad (x,\xi) \in T^* \rr d, \quad \forall N \geqs 0.  
\end{equation}

The inverse transform is given by
\begin{equation}\label{eq:STFTinverse}
u = (2\pi )^{-\frac d2} \iint_{\rr {2d}} V_\fy u (x,\xi) M_\xi T_x \fy \, \dd x \, \dd \xi
\end{equation}
provided $\| \fy \|_{L^2} = 1$, with action under the integral understood, that is 
\begin{equation}\label{eq:moyal}
(u, f) = (V_\fy u, V_\fy f)_{L^2(\rr {2d})}
\end{equation}
for $u \in \cS'(\rr d)$ and $f \in \cS(\rr d)$, cf. \cite[Theorem~11.2.5]{Grochenig1}. 

The symbol $\sigma > 0$ will denote the anisotropy parameter.  
A $\sigma$-conic subset of $T^* \rr d \setminus 0$ is closed under the anisotropic scaling
$T^* \rr d \setminus 0 \ni (x,\xi) \mapsto ( \lambda x, \lambda^\sigma \xi) \in T^* \rr d \setminus 0$ for all $\lambda > 0$. 
We use the anisotropic phase space weight
\begin{equation*}
\theta_\sigma(x,\xi)= 1+|x|+|\xi|^{\frac1{\sigma}}, \quad x, \xi \in \rr d. 
\end{equation*}
Then 
\begin{equation}\label{eq:lambdaboundisotropic}
\eabs{ (x,\xi) }^{\min \left( 1, \frac1\sigma \right)}
\lesssim \theta_\sigma(x,\xi)
\lesssim \eabs{(x,\xi)}^{\max \left( 1, \frac1\sigma \right)}, \quad (x,\xi) \in \rr {2d} \setminus 0. 
\end{equation}

We present briefly the pseudodifferential calculus we will use in the proof of Theorems
\ref{thm:propanisogabor} and \ref{thm:propagationannular}, and refer to \cite{BBR, Hormander2, Nicola1, Shubin1, RW2} for more precise and general presentations.
The space of $\sigma$-anisotropic global symbols $G^{r,\sigma}$ of order $r \in \ro$ consists of all functions $a \in C^\infty(\rr {2d})$ 
which satisfy the estimates
\begin{equation}\label{eq:anisosymbolest} 
|\pdd x \alpha \pdd \xi \beta a(x,\xi)| 
\lesssim \theta_\sigma (x,\xi)^{r-|\alpha|-\sigma|\beta|}, \qquad x,\xi \in \rr d, \quad \alpha, \beta \in \nn d.
\end{equation}
We have
\begin{equation*}
\bigcap_{r \in \ro} G^{r,\sigma} = \cS(\rr {2d})  
\end{equation*}
and $G^{r,1} = G_1^r$, 
where $G_\rho^r$ denotes the standard Shubin class with parameter $0 \leqs \rho \leqs 1$ \cite{Shubin1}.
A symbol $a \in G_\rho^r$ satisfies 
\begin{equation*}
|\pdd x \alpha \pdd \xi \beta a(x,\xi)| 
\lesssim \eabs{(x,\xi)}^{r - \rho|\alpha + \beta|}, \qquad x,\xi \in \rr d, \quad \alpha, \beta \in \nn d. 
\end{equation*} 

\begin{remark}\label{rem:Shubin}
As a consequence of \eqref{eq:lambdaboundisotropic} we have the inclusion 
\begin{equation}\label{eq:Gmsigmainclusion}
G^{r,\sigma} \subseteq G_\rho^{r_0}, 
\end{equation}
where $r_0 = \max \left( r, \frac{r}{\sigma} \right)$ and $\rho = \min \left( \sigma, \frac1\sigma \right)$. 
Hence the Shubin calculus \cite{Shubin1} applies to the anisotropic Shubin symbols, but the anisotropy is lost. 
\end{remark}

\begin{remark}\label{rem:NicolaRodino}
The pseudodifferential calculus in \cite{Nicola1} is not directly applicable to the symbol classes $G^{r,\sigma}$ unless $\sigma = 1$. 
In fact if $\sigma \neq 1$ then either the space weight function $\Phi(x,\xi) = 1 + |x| + |\xi|^{\frac1\sigma}$ or the frequency weight function
$\Psi(x,\xi) = 1 + |x|^\sigma + |\xi|$ is not sublinear, which is required in \cite[Eq.~(1.1.1)]{Nicola1}. 
Nevertheless from \eqref{eq:lambdaboundisotropic} it follows that $G^{r,\sigma} \subseteq S(M; \Phi, \Psi)$ 
as defined in \cite[Definition~1.1.1]{Nicola1} with 
$M(x,\xi) = \theta_\sigma(x,\xi)^r$, $\Phi(x,\xi) = \eabs{(x,\xi)}^{\min \left(1, \frac1\sigma \right)}$
and $\Psi(x,\xi) = \eabs{(x,\xi)}^{\min(1,\sigma)}$. 
The functions $\Phi$ and $\Psi$ are sublinear. 
The weight $M$ is anisotropic but $\Phi$ and $\Psi$ are not. 
Thus the pseudodifferential calculus in \cite[Chapter~1.2]{Nicola1} applies to $G^{r,\sigma}$, 
but the anisotropic behavior of the derivatives is again lost. 
\end{remark}

\begin{remark}\label{rem:symbolcomparison}
When $\sigma > 0$ is rational, that is $\sigma = \frac{k}{m}$ with $k,m \in \no \setminus 0$
then the symbol classes $G^{r,\sigma}$ coincide with the symbol classes $S_{m,k}^r$ in 
\cite{Helffer1}, as $G^{r,\sigma} = S_{m,k}^{r m}$ for $r \in \ro$. 
In the general case $\sigma \in \ro_+$ it has been shown in
\cite[Proposition~4.2]{CDR2} that $G^{r,\sigma}$ is a particular case 
of the Weyl--H\"ormander symbol classes \cite[Chapter~18.4]{Hormander1}. 
More precisely if $r \in \ro$ then $G^{r,\sigma} = S( \theta_\sigma^r, g)$
with the metric
\begin{equation*}
g = \frac{\dd x^2}{\theta_\sigma(x,\xi)^2} + \frac{\dd \xi^2}{ \theta_\sigma(x,\xi)^{2 \sigma} }. 
\end{equation*}
\end{remark}

For $a \in G^{r,\sigma}$ and $\tau \in \ro$ a pseudodifferential operator in the $\tau$-quantization is defined by
\begin{equation}
a_\tau(x,D)f(x)= (2\pi)^{-d} \int_{\rr {2d}}e^{i\langle x-y,\xi \rangle} a ((1-\tau)x+\tau y,\xi) f(y)\, \dd y \, \dd\xi,
\end{equation}
interpreted as an oscillatory integral. 
If $\tau=0$ we get the Kohn--Nirenberg (normal) quantization $a(x,D) = a_0(x,D)$, and if $\tau=\frac12$ we get the Weyl quantization. 
We will use exclusively the Weyl quantization in this paper, writing $a^w = a^w(x,D) = a_{\frac12}(x,D)$. 
By \cite[Proposition~3.3]{RW2} the classes $G^{r,\sigma}$ are invariant under change of the quantization parameter $\tau \in \ro$.

If $a \in G^{r,\sigma}$  the operator $a^w(x,D)$ acts continuously on $\cS(\rr d)$ and extends by duality to a continuous operator on $\cS'(\rr d)$.
Given a sequence $a_j \in G^{r_j,\sigma}$, $j=1,2,\ldots$, with $r_j \to -\infty$ as $j \to +\infty$, we write 
$a \sim \sum_{j=1}^\infty a_j$ to mean that for $N \geqs 2$ 
\begin{equation*}
a-\sum_{j=1}^{N-1} a_j \in G^{\mu_N,\sigma},
\end{equation*}
with $\mu_N = \max\limits_{j \geqs N} r_j$.  
Given a sequence $(a_j)$ as above, 
we can construct a symbol $a \in G^{\mu_1,\sigma}$, unique modulo $\cS(\rr {2d})$, such that $a \sim \sum_{j=1}^\infty a_j$ \cite[Lemma~3.2]{RW2}.

The Weyl product $a \wpr b$ of two symbols $a \in G^{r,\sigma}$ and $b \in G^{t,\sigma}$ 
is defined as the symbol of the operator composition: $(a \wpr b)^w(x,D) = a^w(x,D) b^w(x,D)$ provided the composition is well defined. 
By \cite[Proposition~3.3]{RW2} we have $a \wpr b \in G^{r+t,\sigma}$, 
and asymptotic expansion 
\begin{equation*}
a \wpr b(x,\xi) \sim \sum_{\alpha, \beta \geqs 0} \frac{(-1)^{|\beta|}}{\alpha! \beta!} \ 2^{-|\alpha+\beta|}
D_x^\beta \pdd \xi \alpha a(x,\xi) \, D_x^\alpha \pdd \xi \beta b(x,\xi)
\end{equation*}
with terms in $G^{r + t -(1 + \sigma)|\alpha + \beta|,\sigma}$.

We will use the following scale of regularity spaces \cite[Definition~4.1]{CRW}.

\begin{definition}\label{def:Sobolevaniso}
Let $\fy \in \cS(\rr d) \setminus 0$. 
The anisotropic Shubin--Sobolev modulation space $M_{\sigma,s} (\rr d)$ with anisotropy parameter $\sigma > 0$ and order $s \in \ro$ is the Hilbert subspace of $\cS'(\rr d)$
defined by the norm
\begin{equation}\label{eq:SSmodnorm}
\| u \|_{M_{\sigma,s}} = \left( \iint_{\rr {2 d}} |V_\fy u (x,\xi)|^2 \, \theta_\sigma (x,\xi)^{2 s} \, \dd x \, \dd \xi \right)^{\frac12}. 
\end{equation}
\end{definition}

For any $\sigma > 0$ we have $M_{\sigma,0} (\rr d) = L^2(\rr d)$ \cite{Grochenig1}, 
and $M_{\sigma,s_1} (\rr d) \subseteq M_{\sigma,s_2}(\rr d)$ is a continuous inclusion when $s_1 \geqs s_2$. 
It holds \cite{Grochenig1}
\begin{equation}\label{eq:schwartzmodsp}
\cS(\rr d) = \bigcap_{s \in \ro} M_{\sigma,s} (\rr d)
\end{equation}
and 
\begin{equation}\label{eq:temperedmodsp}
\cS'(\rr d) = \bigcup_{s \in \ro} M_{\sigma,s} (\rr d).  
\end{equation}

If $\sigma = \frac{k}{m}$ with $k,m \in \no \setminus 0$
then the norm $\| \cdot \|_{M_{\sigma,s}}$ is equivalent to the norm 
\begin{equation}\label{eq:locopmodspace}
\| u \|_{M_{\sigma,s}(\rr d)}
\asymp 
\| A_s u \|_{L^2(\rr d)} 
\end{equation}
where $A_s$ is a 
localization operator defined by
\begin{equation}\label{eq:locop}
(A_s f, g) = ( w_{k,m}^{\frac{s}{k}} V_\fy f, V_\fy g) = (V_\fy^* w_{k,m}^{\frac{s}{k}} V_\fy f, g), \quad f,g \in \cS(\rr d), 
\end{equation}
where $\fy \in \cS(\rr d)$ satisfy $\| \fy \|_{L^2} = 1$ \cite{CRW,Grochenig2}. 
It has symbol $w_{k,m}^{\frac{s}{k}} \in C^\infty(\rr {2d})$ where 
\begin{equation*}
w_{k,m} (x, \xi) 
= \left( 1 + | x |^{2k} + | \xi |^{2 m} \right)^{\frac12}
\asymp \theta_{\sigma}(x,\xi)^{k}. 
\end{equation*}

Pseudodifferential operators with anisotropic Shubin symbols act continuously between 
$M_{\sigma,s}(\rr d)$ spaces with natural loss of regularity according to \cite[Proposition~4.2]{CRW}: 

\begin{theorem}\label{thm:PseudoShubinSobolev}
If $\sigma > 0$, $r \in \ro$ and $a \in G^{r,\sigma}$, then for all $s \in \ro$ the operator
\begin{equation*}
a^w(x,D): M_{\sigma,s+r}(\rr d) \to M_{\sigma,s}(\rr d)
\end{equation*}
is continuous. 
\end{theorem}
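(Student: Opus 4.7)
The plan is to reduce the claim to the case $s = r = 0$, which asserts $L^2$-boundedness of Weyl operators with symbols in $G^{0,\sigma}$. This base case is immediate: by Remark~\ref{rem:Shubin} we have $G^{0,\sigma} \subseteq G_\rho^0$ with $\rho = \min(\sigma, 1/\sigma) > 0$, and Weyl quantizations of classical Shubin symbols in $G_\rho^0$ are bounded on $L^2(\rr d)$. Alternatively, one invokes the Calder\'on--Vaillancourt bound built into the Weyl--H\"ormander calculus associated with the metric $g$ of Remark~\ref{rem:symbolcomparison}.

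Next I would construct elliptic weight operators. For each $t \in \ro$, fix a positive symbol $\lambda_t \in G^{t,\sigma}$ with $\lambda_t \asymp \theta_\sigma^t$ (for instance $\theta_\sigma^t$ itself with a smooth regularization near the origin). By the $\sigma$-anisotropic Shubin calculus of \cite{RW2} recalled in Section~\ref{sec:prelim}, ellipticity yields a parametrix $\lambda_t^\# \in G^{-t,\sigma}$ satisfying
\begin{equation*}
\lambda_t \wpr \lambda_t^\# = 1 + \rho_t, \qquad \rho_t \in \cS(\rr{2d}).
\end{equation*}
Setting $\Lambda_t = \lambda_t^w(x,D)$ and $\Lambda_t^\# = (\lambda_t^\#)^w(x,D)$, the remainder $\rho_t^w$ is regularizing, mapping $\cS'(\rr d)$ continuously into $\cS(\rr d)$.

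The key intermediate step is the norm equivalence
\begin{equation*}
\| u \|_{M_{\sigma,s}} \asymp \| \Lambda_s u \|_{L^2(\rr d)}, \qquad u \in \cS'(\rr d),\ s \in \ro.
\end{equation*}
When $\sigma = k/m$ is rational this is essentially \eqref{eq:locopmodspace}, after noting that the anti-Wick localization operator $A_s$ and the Weyl quantization $\Lambda_s$ differ by an operator whose Weyl symbol lies in $G^{s-(1+\sigma),\sigma}$, a strictly lower-order correction absorbed by iterating the argument. For general $\sigma > 0$ one writes $V_\fy(\Lambda_s u)$ as an integral operator acting on $V_\fy u$ and proves Schur-type off-diagonal estimates with weight $\theta_\sigma^s$ using \eqref{eq:anisosymbolest}; equivalently, one identifies $M_{\sigma,s}$ with the Weyl--H\"ormander Sobolev space $H(\theta_\sigma^s, g)$ from Remark~\ref{rem:symbolcomparison}, which by the general theory is characterized by $\Lambda_s u \in L^2$.

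Granted the equivalence, the conclusion follows from symbol calculus: the composition formula (Proposition~3.3 of \cite{RW2}) gives $\lambda_s \wpr a \wpr \lambda_{s+r}^\# \in G^{0,\sigma}$, so $\Lambda_s a^w \Lambda_{s+r}^\#$ is $L^2$-bounded by the base case. Decomposing
\begin{equation*}
\Lambda_s a^w u = \bigl(\Lambda_s a^w \Lambda_{s+r}^\#\bigr) \Lambda_{s+r} u + \Lambda_s a^w \rho_{s+r}^w u,
\end{equation*}
the first term has $L^2$-norm controlled by $\|\Lambda_{s+r} u\|_{L^2} \asymp \|u\|_{M_{\sigma,s+r}}$, while the second lies in $\cS(\rr d)$ since $\rho_{s+r}^w u \in \cS$ and both $a^w$ and $\Lambda_s$ preserve $\cS$; its $L^2$-norm is bounded by a continuous seminorm of $u$ on $\cS'$, hence by $\|u\|_{M_{\sigma,s+r}}$ using \eqref{eq:STFTtempered} together with the continuous inclusion $M_{\sigma,s+r} \subseteq \cS'$. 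The norm equivalence applied in reverse then yields $\|a^w u\|_{M_{\sigma,s}} \lesssim \|u\|_{M_{\sigma,s+r}}$. The main obstacle in this plan is the norm equivalence $\|u\|_{M_{\sigma,s}} \asymp \|\Lambda_s u\|_{L^2}$ for irrational $\sigma$, where \eqref{eq:locopmodspace} is unavailable and one must argue from scratch via the STFT or via the Weyl--H\"ormander framework.
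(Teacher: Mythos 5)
The paper does not prove this theorem itself — it is quoted from \cite[Proposition~4.2]{CRW} — so there is no in-paper argument to compare against. Evaluating your proposal on its own terms, the outline (reduce to $L^2$-boundedness by conjugating with elliptic anisotropic weight operators, via a norm equivalence) is a sensible plan, but there are two real problems.

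First, your candidate for the elliptic weight is not a symbol. The function $\theta_\sigma(x,\xi)=1+|x|+|\xi|^{1/\sigma}$ is non-smooth on the whole hyperplanes $\{x=0\}$ and $\{\xi=0\}$, not just at the origin, so a cutoff near the origin does not repair it. More to the point, even a smooth surrogate such as $\Theta(x,\xi)=\eabs{x}+\eabs{\xi}^{1/\sigma}$ fails \eqref{eq:anisosymbolest}: for $d=1$ and $\sigma>1$, $\partial_\xi\Theta=\sigma^{-1}\xi\eabs{\xi}^{1/\sigma-2}$ is a fixed nonzero number once $\xi\neq 0$ is fixed, while the required bound $\theta_\sigma(x,\xi)^{1-\sigma}$ decays to zero as $|x|\to\infty$; so $\Theta\notin G^{1,\sigma}$. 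Elliptic weights certainly exist — e.g. $(1+\eabs{x}^{2\sigma}+\eabs{\xi}^2)^{t/(2\sigma)}$ for $\sigma\geqs 1$, with the symmetric formula for $\sigma\leqs 1$, or abstractly via the Weyl--H\"ormander theory recalled in Remark~\ref{rem:symbolcomparison} — but your explicit choice must be replaced.

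Second, as you yourself flag, the plan rests entirely on the norm equivalence $\|u\|_{M_{\sigma,s}}\asymp\|\Lambda_s u\|_{L^2}$, which for irrational $\sigma$ is not supplied by anything in the paper and is left as a sketch. The repair you gesture at — a Schur-type estimate on the STFT side — is sound, but once you are willing to carry it out, the whole parametrix detour is unnecessary: run the estimate directly on $a^w$. With $\|\fy\|_{L^2}=1$, the inversion formula \eqref{eq:STFTinverse} gives, up to a normalization constant,
\begin{equation*}
V_\fy(a^w u)(z)=\int_{\rr{2d}}K_a(z,w)\,V_\fy u(w)\,\dd w,\qquad K_a(z,w)=\left(a^w M_{w_2}T_{w_1}\fy,\;M_{z_2}T_{z_1}\fy\right).
\end{equation*}
Expressing $K_a(z,w)$ as the pairing of $a$ with a cross-Wigner distribution centered at $(z+w)/2$ and oscillating in $z-w$, repeated integration by parts against that phase, together with the symbol estimates \eqref{eq:anisosymbolest}, yields for every $N\geqs 0$
\begin{equation*}
|K_a(z,w)|\lesssim\theta_\sigma\big((z+w)/2\big)^{r}\,\eabs{z-w}^{-N},
\end{equation*}
with a constant depending on $N$. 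Since $\theta_\sigma$ is polynomially moderate, the weighted kernel $\theta_\sigma(z)^{s}K_a(z,w)\theta_\sigma(w)^{-(s+r)}$ is bounded by $\eabs{z-w}^{-N+C(s,r,\sigma)}$, a Schur kernel for $N$ large; Schur's test then gives $\|\theta_\sigma^{s}V_\fy(a^w u)\|_{L^2}\lesssim\|\theta_\sigma^{s+r}V_\fy u\|_{L^2}$, which is the theorem. This direct route is shorter, needs no elliptic weights or parametrices, and handles all $\sigma>0$ uniformly.
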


\begin{remark}\label{rem:shubinsobolevspace}
The spaces $M_{\sigma,s}(\rr d)$ are particular cases of the Sobolev spaces in \cite[Sections~1.5 and 1.7.4]{Nicola1}. 
If $\sigma = \frac{k}{m} \in \qo_+$ then 
they are also identical to Helffer and Robert's spaces $B_{m,k}^s(\rr d)$ \cite[Remarque~4.2]{Helffer1}
according to $B_{m,k}^s = M_{\sigma,(1 + \sigma)s}$ for $s \in \ro$, 
and the spaces $Q_{m,k}^s(\rr d)$ in \cite{CGPR}
according to $Q_{m,k}^s = M_{\sigma,\min(1,\sigma) s}$ for $s \in \ro$. 
Finally the spaces $H_{m,k}^s(\rr d)$ in\cite[Definition~4.7]{CDR1}
can be expressed as 
$H_{m,k}^s = M_{\sigma,ks}$ when $\sigma = \frac{k}{m} \in \qo_+$ and $s \in \ro$. 
\end{remark}

We will use anisotropic Gabor wave front sets \cite{RW2,Wahlberg1} of tempered distributions. 
The concept is related to H.~Zhu's \cite[Definition~1.3]{Zhu1} of quasi-homogeneous 
wave front set defined by two non-negative parameters. 
Zhu uses a semiclassical formulation whereas we use the STFT. 

\begin{definition}\label{def:WFgs}
Suppose $u \in \cS'(\rr d)$, $\fy \in \cS(\rr d) \setminus 0$, and $\sigma > 0$. 
A point $z_0 = (x_0,\xi_0) \in T^* \rr d \setminus 0$ satisfies $z_0 \notin \WFgs ( u )$
if there exists an open set $U \subseteq T^* \rr d$ such that $z_0 \in U$ and 
\begin{equation}\label{eq:WFgs}
\sup_{(x,\xi) \in U, \ \lambda > 0} \lambda^N |V_\fy u (\lambda x, \lambda^\sigma \xi)| < + \infty \quad \forall N \geqs 0. 
\end{equation}
\end{definition}

If $\sigma = 1$ we have $\WFgs ( u ) = \WFg (u)$ 
which denotes the usual Gabor wave front set \cite{Hormander3,RW1}. 
We call $\WFgs( u )$ the $\sigma$-anisotropic Gabor wave front set. 
The set $\WFgs ( u )$ is $\sigma$-conic. 
Referring to \eqref{eq:STFTtempered} and \eqref{eq:STFTschwartz} we see
that $\WFgs ( u )$ records $\sigma$-conic curves 
$0 < \lambda \mapsto (\lambda x, \lambda^\sigma \xi)$ where $V_\fy u$ does not behave like the STFT of a Schwartz function. 
If $u \in \cS'(\rr d)$ then $\WFgs ( u ) = \emptyset$ if and only if $u \in \cS (\rr d)$. 

As wave front sets of many other types, the anisotropic Gabor wave front set can be written as 
\begin{equation}\label{eq:WFchar}
\WFgs (u) = \bigcap_{a \in G^{r,\sigma}: \ a^w(x,D) u \in \cS} \charac_\sigma (a)
\end{equation}
where $r \in \ro$ and where the characteristic set $\charac_\sigma (a) \subseteq T^* \rr d \setminus 0$
is defined as follows. 
It holds $z_0 \notin \charac_\sigma (a)$ if there exists 
an open $\sigma$-conic set $\Gamma \subseteq T^* \rr d \setminus 0$ containing $z_0 \neq 0$
such that 
\begin{equation*}
|a( x, \xi )| \geqs C \theta_\sigma(x,\xi)^r, \quad (x,\xi) \in \Gamma, \quad |(x, \xi) |  \geqs R, 
\end{equation*}
for suitable $C, R > 0$. See \cite[Proposition~3.5]{Wahlberg1}. 

Let $\nu ,\mu > 0$. The
Gelfand--Shilov space $\mathcal S _\nu^\mu (\rr d)$ ($\Sigma _\nu^\mu (\rr d)$) of
Roumieu (Beurling) type \cite{Gelfand1} consists of all $f\in C^\infty (\rr d)$
such that
\begin{equation}\label{eq:gsseminorm}
\| f \|_{\mathcal S_{\nu,h}^\mu} \equiv \sup \frac {|x^\alpha \partial ^\beta
f(x)|}{h^{|\alpha  + \beta |}\alpha !^\nu \, \beta !^\mu}
\end{equation}
is finite for some (every) $h>0$. 
The supremum refers to all $\alpha ,\beta \in \mathbf N^d$ and $x\in \rr d$.

The seminorms \eqref{eq:gsseminorm} induce an inductive limit topology for the
space $\mathcal S _\nu^\mu (\rr d)$ and a projective limit topology for
$\Sigma _\nu^\mu (\rr d)$. The latter space is a Fr{\'e}chet space under this topology.
The space $\mathcal S _\nu^\mu (\rr d) \neq \{ 0\}$ ($\Sigma _\nu^\mu (\rr d) \neq \{0\}$),
if and only if $\nu +\mu \geqs 1$ ($\nu+\mu > 1$) \cite{Petersson1}.

\section{Well-posedness} \label{sec:wellposed}

In this section we prove the well-posedness results stated in the Introduction.
We use characterizations of the spaces $M_{\sigma,s}(\rr d)$ with $s \in \ro$, $\cS(\rr d)$, and $\cS'(\rr d)$,
as well as the Gelfand--Shilov spaces $\mathcal{S}^\mu_\nu(\rr d)$ and $\Sigma_\nu^\mu(\rr d)$, 
in terms of the behavior of the coefficients of their elements
with respect to the orthonormal basis of eigenfunctions of a positive operator $A$ of the form \eqref{eq:diffoperator} satisfying \eqref{eq:ellipticity}. 
We use results from \cite{gpr1, CGPR}.
 
Let $k,m \in 2\no \setminus 0$, $\sigma = \frac{k}{m}$ and define the operator $A$ by \eqref{eq:diffoperator}.
The Kohn--Nirenberg symbol for $A = a(x,D)$ is 
\begin{equation*}
a(x,\xi) = \sum\limits_{k|\alpha| +m|\beta| \leqs km} c_{\alpha\beta} x^\beta \xi^\alpha. 
\end{equation*}
By \cite[Example~3.9]{RW2} we know that $a \in G^{k,\sigma}$, and by \cite[Proposition~3.3]{RW2}
we have $A = b^w(x,D)$ with $b \in G^{k,\sigma}$. 
The anisotropic ellipticity condition \eqref{eq:ellipticity} amounts to \cite{CGR2}
\begin{equation}\label{eq:ellipticity2}
|a(x,\xi)| \geqs C \theta_\sigma(x,\xi)^k, \quad |(x,\xi)| \geqs R, 
\end{equation}
for some $C, R > 0$. 
The operator
\begin{equation*}
A: M_{\sigma,k}(\rr d)
\to L^2(\rr d)
\end{equation*}
is continuous by Theorem \ref{thm:PseudoShubinSobolev}. 

By \cite[Theorem~4.2.4]{Nicola1} the closure of $A$ as an unbounded operator in $L^2$
coincides with its maximal realization, due to the ellipticity \eqref{eq:ellipticity2}. 
In the sequel we denote the closure still by $A$. 

The operator $A$ is Fredholm 
\cite[Theorem~10.1]{BBR}, \cite[Theorem~1.6.9]{Nicola1}. 
The finite-dimensional null space $\textrm{Ker}\, A$ consists of functions in the Gelfand--Shilov space $\mathcal{S}_{\frac{m}{k+m}}^{\frac{k}{k+m}}(\rr d)$ \cite{CGR2}.
The assumption that $A$ is positive implies that its Weyl symbol is real-valued \cite[p.~30]{CRW}. 
The added assumption of ellipticity in the sense of \eqref{eq:ellipticity2}
guarantees the existence
of an orthonormal  basis of eigenfunctions $\fy_j \in L^2(\rr d)$, $j \in \no \setminus 0$, 
with eigenvalues $\lambda_j > 0$ such that $\lim\limits_{j\to +\infty }\lambda_j = +\infty$ (see \cite{CGR2,Shubin1} and \cite[Theorem~4.2.9]{Nicola1}). 

From \cite[Theorem 3.2, page 129]{BBR} one deduces the asymptotic behavior of the eigenvalues
\begin{equation}\label{eq:asympeigenvalues}
\lambda_j \asymp j^{\frac{mk}{d(m+k)}} 
= j^{\frac{k}{d (1+\sigma)}}
\quad {\rm as} \quad j \to +\infty.
\end{equation}

Since the Weyl symbol of $A-\lambda_j$ also satisfies \eqref{eq:ellipticity2} it follows 
that $\fy_j \in \mathcal{S}_{\frac{m}{k+m}}^{\frac{k}{k+m}}(\rr d)$ for $j \geqs 1$ \cite{CGR2}.
Given $u \in L^2(\rr d)$ we can expand
\begin{equation}\label{eq:expansion}
u=\sum\limits_{j=1}^{+\infty} u_j \fy_j
\end{equation}
with coefficients $( u_j )_{j \geqs 1} \in l^2(\no \setminus 0)$ defined by
\begin{equation}\label{eq:coefficient}
u_j = ( u,\fy_j ), \quad j = 1,2,\ldots . 
\end{equation}

The following result characterizes anisotropic Shubin--Sobolev modulation spaces $M_{\sigma,s}(\rr d)$ 
when $\sigma \in \qo_+$
and $s \in \ro$
in terms of the behavior of the coefficients with respect to the eigenvectors and the eigenvalues of a positive elliptic differential operator.

\begin{proposition}\label{prop:modspacenorm}
Let $k,m \in 2\no \setminus 0$, $\sigma = \frac{k}{m}$ and define the operator $A$ by \eqref{eq:diffoperator}.
Suppose $A$ is positive and elliptic in the sense of \eqref{eq:ellipticity}, 
let $(\fy_j)_{j \geqs 1} \subseteq \mathcal{S}_{\frac{m}{k+m}}^{\frac{k}{k+m}}(\rr d)$ be its orthonormal basis of eigenvectors, 
and let $( \lambda_j )_{j \geqs 1} \subseteq \ro_+$ be the corresponding eigenvalues.
If $u \in \cS'(\rr d)$ and $u_j = (u,\fy_j)$ for $j \geqs 1$ then for all $s \in \ro$
\begin{equation}\label{eq:charmodsp}
\| u \|_{M_{\sigma,s}(\rr d)}^2
\asymp \sum_{j = 1}^{+\infty} \lambda_j^{\frac{2 s}{k}} | u_j |^2.
\end{equation}
\end{proposition}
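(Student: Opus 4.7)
The plan is to pass through the operator $A^{s/k}$ defined by the functional calculus of the positive self-adjoint operator $A$. Since $(\fy_j)_{j \geqs 1}$ is an orthonormal basis of $L^2(\rr d)$ diagonalizing $A$, Parseval's identity directly yields
\[
\| A^{s/k} u \|_{L^2}^2 = \sum_{j \geqs 1} \lambda_j^{2 s/k} |u_j|^2
\]
on the natural domain of $A^{s/k}$, so \eqref{eq:charmodsp} reduces to the norm equivalence
\[
\| u \|_{M_{\sigma,s}(\rr d)} \asymp \| A^{s/k} u \|_{L^2(\rr d)}, \qquad s \in \ro.
\]

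The crucial ingredient is that $A^{s/k}$ lies in the anisotropic Shubin calculus with Weyl symbol in $G^{s,\sigma}$ which is globally elliptic in the sense of \eqref{eq:ellipticity2}, the principal part being the $s/k$-th power of the Weyl symbol $b \in G^{k,\sigma}$ of $A$. This is a Seeley-type theorem on complex powers of positive elliptic operators, adapted to the anisotropic Shubin calculus; it is available in \cite{gpr1, CGPR} and, for $\sigma = k/m \in \qo_+$, already in \cite{Helffer1}. The proof proceeds by constructing the resolvent $(z - A)^{-1}$ as an element of $G^{-k,\sigma}$ on a sector of $\co$ disjoint from the spectrum with estimates uniform in $z$, and then setting $A^{s/k} = (2\pi i)^{-1} \int_\Gamma z^{s/k} (z - A)^{-1} \, \dd z$ for a contour $\Gamma$ surrounding $\mathrm{Spec}(A) \subseteq \ro_+$.

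Granted this, the same argument applied to the exponent $-s$ produces $A^{-s/k}$ as an anisotropic Shubin operator with symbol in $G^{-s,\sigma}$, which coincides with the $L^2$-inverse of $A^{s/k}$ furnished by the spectral calculus. Theorem \ref{thm:PseudoShubinSobolev} then yields continuity of $A^{s/k}: M_{\sigma,s}(\rr d) \to L^2(\rr d)$ and of $A^{-s/k}: L^2(\rr d) \to M_{\sigma,s}(\rr d)$, and from the identity $u = A^{-s/k} A^{s/k} u$ one obtains both inequalities in $\| u \|_{M_{\sigma,s}} \asymp \| A^{s/k} u \|_{L^2}$ for $u \in M_{\sigma,s}(\rr d)$. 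For general $u \in \cS'(\rr d)$, \eqref{eq:temperedmodsp} places $u$ in some $M_{\sigma,s_0}(\rr d)$, so the coefficients $u_j = (u, \fy_j)$ are well defined since $\fy_j \in \cS(\rr d)$, and both sides of \eqref{eq:charmodsp} are either simultaneously finite or simultaneously infinite.

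The main obstacle is precisely the anisotropic complex powers construction in $G^{\cdot,\sigma}$ for $\sigma \neq 1$: one needs resolvent symbol estimates adapted to the $\sigma$-anisotropic scaling governed by $\theta_\sigma$, and this is the non-trivial input imported from \cite{gpr1, CGPR}. Once $A^{s/k}$ is recognized as an elliptic operator in the calculus, everything else is standard elliptic Shubin--Sobolev machinery built on top of Theorem \ref{thm:PseudoShubinSobolev}.
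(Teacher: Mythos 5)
Your proposal takes the same strategic line as the paper: reduce \eqref{eq:charmodsp} to the norm equivalence $\| u \|_{M_{\sigma,s}} \asymp \| A^{s/k} u \|_{L^2}$ via complex powers and finish with Parseval. The difference is where the power operator is made to live. You place $A^{\pm s/k}$ in the anisotropic Shubin classes $G^{\pm s,\sigma}$ by a Seeley-type theorem in that calculus, then apply Theorem~\ref{thm:PseudoShubinSobolev} to both $A^{s/k}$ and $A^{-s/k}$ and close the loop via $u = A^{-s/k}A^{s/k}u$. The paper deliberately avoids this: per Remark~\ref{rem:NicolaRodino} the calculus of \cite{Nicola1} does not admit anisotropic $\Phi,\Psi$, so the paper instead places the Weyl symbol $b$ in the isotropic Weyl--H\"ormander class $S(M;\Phi,\Psi)$ with anisotropic weight $M=\theta_\sigma^k$ and isotropic $\Phi,\Psi$, imports the complex powers and Sobolev boundedness from \cite[Thms.~4.3.5, 4.3.6; Props.~1.5.3, 1.5.5, 1.7.12]{Nicola1}, and identifies $\|\cdot\|_{M_{\sigma,s}}$ with $\|A^{s/k}\cdot\|_{L^2}$ through the localization operator characterization \eqref{eq:locopmodspace}; this gives $b_p\in{\rm Hypo}(M^p,M^p;\Phi,\Psi)$, which is weaker than $G^{s,\sigma}$-ellipticity but sufficient. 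Your version is arguably cleaner in that it never leaves the anisotropic framework and needs only Theorem~\ref{thm:PseudoShubinSobolev}, but it rests on a complex powers theorem inside $G^{\cdot,\sigma}$, which for $\sigma=k/m\in\qo_+$ is indeed available through the identification $G^{r,\sigma}=S_{m,k}^{rm}$ of Remark~\ref{rem:symbolcomparison} and Helffer--Robert \cite{Helffer1}; your attribution of this specific fact to \cite{gpr1,CGPR} is a bit loose (those references concern eigenfunction expansions rather than complex powers), and the honest source is \cite{Helffer1}. Both routes are correct, and the common endpoint $\| A^{s/k} u \|_{L^2}^2 = \sum_j \lambda_j^{2s/k}|u_j|^2$ is the same.
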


\begin{proof}
According to the discussion above
the Weyl symbol of $A = b^w(x,D)$ satisfies $b \in G^{k,\sigma}$, and the symbol $b$ is elliptic in the sense of 
\begin{equation*}
| b(x,\xi) | \geqs C \theta_\sigma(x,\xi)^k, \quad |(x,\xi)| \geqs R, 
\end{equation*}
for some $C, R > 0$. 

By Remark \ref{rem:NicolaRodino} we may consider the symbol $b$ as an anisotropic symbol 
in an isotropic calculus.  
This means that $b \in S(M; \Phi, \Psi)$ 
as defined in \cite[Definition~1.1.1]{Nicola1} with 
$M(x,\xi) = \theta_\sigma(x,\xi)^k$, $\Phi(x,\xi) = \eabs{(x,\xi)}^{\min \left(1, \frac1\sigma \right)}$
and $\Psi(x,\xi) = \eabs{(x,\xi)}^{\min(1,\sigma)}$. 
Note that $M$ is anisotropic while $\Phi$ and $\Psi$ are isotropic. 
The symbol $b$ is then elliptic, denoted $b \in {\rm Hypo}(M, M; \Phi, \Psi)$ in the sense of \cite[Definition~1.3.2]{Nicola1}. 
Since $b^w(x,D)$ is positive, the symbol $b$ is real-valued. 

It now follows from \cite[Theorems~4.3.5 and 4.3.6]{Nicola1} that for any $p > 0$ the power operator $A^p$
is a well defined unbounded operator in $L^2(\rr d)$, with eigenvalues $( \lambda_j^p) _{j \geqs 1}$, and 
Weyl symbol $b_p \in {\rm Hypo}(M^p, M^p; \Phi, \Psi)$. 
By the spectral theorem the operator $A^p$ has the same eigenfunctions $( \fy_j)_{j \geqs 1}$ as $A$ so we have
if $u \in \cS(\rr d)$
\begin{equation*}
A^p u = \sum_{j=1}^{+\infty}  \lambda_j^p (u, \fy_j) \fy_j. 
\end{equation*}

Suppose $s > 0$. If $p = \frac{s}{k}$ then $b_p \in {\rm Hypo}(M^p, M^p; \Phi, \Psi)$ is an elliptic symbol. 
We note that $M^p = \theta_\sigma^s \asymp w_{k,m}^{\frac{s}{k}}$. 
By \cite[Propositions~1.5.5~(a) and 1.7.12, and Remark~1.7.13]{Nicola1} the operator 
$A^{\frac{s}{k}} = b_{\frac{s}{k}}^w(x,D) : M_{\sigma,s} (\rr d) \to L^2(\rr d)$ is continuous and 
invertible with bounded inverse 
\begin{equation*}
\left( A^{\frac{s}{k}} \right)^{-1} u = \sum_{j = 1}^{+\infty} \lambda_j^{- \frac{s}{k}} (u, \fy_j) \fy_j : L^2(\rr d) \to M_{\sigma,s} (\rr d). 
\end{equation*}

The inverse is in fact a pseudodifferential operator: $\left( A^{\frac{s}{k}} \right)^{-1} = c_\frac{s}{k}^w(c,D)$
for some $c_\frac{s}{k} \in {\rm Hypo}(M^{-p}, M^{-p}; \Phi, \Psi)$. 
Indeed by \cite[Theorem~1.3.6]{Nicola1} there exists a parametrix $c \in {\rm Hypo}(M^{-p}, M^{-p}; \Phi, \Psi)$ such 
that $b_{\frac{s}{k}}^w c^w = I + r^w$ where $r \in \cS(\rr {2d})$, 
which gives 
\begin{equation*}
\left( A^{\frac{s}{k}} \right)^{-1} = c^w - \left( A^{\frac{s}{k}} \right)^{-1} r^w
= c_\frac{s}{k}^w
\end{equation*}
since $\left( A^{\frac{s}{k}} \right)^{-1} r^w: \cS' \to \cS$ is regularizing with a symbol in $\cS(\rr {2d})$
which can be absorbed in $c$ into $c_\frac{s}{k} \in {\rm Hypo}(M^{-p}, M^{-p}; \Phi, \Psi)$. 

As a consequence of \eqref{eq:locopmodspace}, \cite[Propositions~1.5.3 and 1.7.12]{Nicola1} we may define the 
$M_{\sigma,s}(\rr d)$ norm equivalently by replacing the localization operator $A_s$ in \eqref{eq:locopmodspace}
by the operator $A^{\frac{s}{k}}$. 
Hence
\begin{equation*}
\| u \|_{M_{\sigma,s}(\rr d)}^2
\asymp \| A^{\frac{s}{k}} u \|_{L^2(\rr d)}^2
= \sum_{j = 1}^{+\infty} \lambda_j^{\frac{2 s}{k}} | u_j |^2
\end{equation*}
which proves the claim \eqref{eq:charmodsp} when $s > 0$. 

Finally we need to consider $s \leqs 0$. Since $M_{\sigma,0} = L^2$ the result is trivial when $s = 0$
so we may assume $s < 0$. Again using
\eqref{eq:locopmodspace}, \cite[Propositions~1.5.3 and 1.7.12]{Nicola1} we may define the 
$M_{\sigma,s}(\rr d)$ norm equivalently by replacing the localization operator $A_s$ in \eqref{eq:locopmodspace}
by the operator $(A^{\frac{|s|}{k}})^{-1} = c_\frac{|s|}{k}^w$. 
We get
\begin{equation*}
\| u \|_{M_{\sigma,s}(\rr d)}^2
\asymp \| (A^{\frac{|s|}{k}})^{-1} u \|_{L^2(\rr d)}^2
= \sum_{j = 1}^{+\infty} \lambda_j^{-\frac{2 |s|}{k}} | u_j |^2
= \sum_{j = 1}^{+\infty} \lambda_j^{\frac{2 s}{k}} | u_j |^2. 
\end{equation*}
\end{proof}

\begin{remark}\label{rem:isotropic}
In the proof of Proposition \ref{prop:modspacenorm} we use results from 
the pseudodifferential calculus in \cite{Nicola1}, which does not admit 
anisotropic space and frequency weights $\Phi$ and $\Psi$ respectively, cf. Remark \ref{rem:NicolaRodino}. 
But this calculus does admit anisotropic weight functions $M$. 
This suffices for our purposes in Proposition \ref{prop:modspacenorm} 
concerning the anisotropic Sobolev--Shubin modulation spaces
$M_{\sigma,s}(\rr d)$. 
\end{remark}

\begin{remark}\label{rem:seminormschwartz}
The proof of Proposition \ref{prop:modspacenorm} and \cite{CRW,Grochenig1} show that 
\begin{equation*}
\cS(\rr d) \ni u \mapsto \left( \sum_{j = 1}^{+\infty} \lambda_j^{\frac{2 s}{k}} | (u, \fy_j) |^2 \right)^{\frac12}
\end{equation*}
for $s \geqs 0$ is a family of seminorms for the Fr\'echet space topology of $\cS(\rr d)$. 
Using the orthonormality of $(\fy_j)_{j \geqs 1}$,  from Proposition \ref{prop:modspacenorm} and Remark \ref{rem:seminormschwartz} we obtain that 
if $u \in \cS'(\rr d)$ then for some $C, s > 0$
\begin{align*}
|u_j| 
& = | (u, \fy_j)|
\leqs C \left( \sum_{n = 1}^{+\infty} \lambda_n^{\frac{2 s}{k}} | (\fy_j, \fy_n) |^2 \right)^{\frac12} 
= C \lambda_j^{\frac{s}{k}} 
\asymp j^{\frac{s m}{d(k+m)}} \\
& = j^{\frac{s}{d(1+\sigma)}}, \quad j \in \no \setminus 0, 
\end{align*}
in the last step using \eqref{eq:asympeigenvalues}. 
\end{remark}

\begin{corollary}\label{cor:wellposedschwartz}
Under the assumptions of Proposition \ref{prop:modspacenorm} we have with $u_j = (u,\fy_j)$ for $j \geqs 1$
\begin{align}
& u \in \cS (\rr d) \quad 
\Longleftrightarrow \quad 
|u_j|= O(\lambda_j^{-s}), \quad j \to +\infty, \quad \forall s > 0 \label{eq:charschwartz1} \\ 
& \qquad \qquad \qquad \Longleftrightarrow  \quad |u_j|= O(j^{-s}), \quad j\to +\infty, \quad \forall s > 0. \label{eq:charschwartz2}
\end{align}
\end{corollary}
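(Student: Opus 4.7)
The plan is to deduce the corollary directly from Proposition \ref{prop:modspacenorm} together with the characterization \eqref{eq:schwartzmodsp} of the Schwartz space as $\cS(\rr d) = \bigcap_{s \in \ro} M_{\sigma,s}(\rr d)$, and the asymptotic behavior \eqref{eq:asympeigenvalues} of the eigenvalues.

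For the forward implication in \eqref{eq:charschwartz1}, I would assume $u \in \cS(\rr d)$. Then $u \in M_{\sigma,s}(\rr d)$ for every $s \in \ro$, so by Proposition \ref{prop:modspacenorm} the series $\sum_{j \geqs 1} \lambda_j^{2s/k} |u_j|^2$ converges for every $s > 0$. In particular each term is bounded, which gives $|u_j| \lesssim \lambda_j^{-s/k}$ as $j \to +\infty$. Since $s > 0$ is arbitrary and $\lambda_j \to +\infty$, this yields $|u_j| = O(\lambda_j^{-t})$ for every $t > 0$.

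For the converse, suppose $|u_j| = O(\lambda_j^{-t})$ for every $t > 0$. Fix $s > 0$ and choose $t$ large enough so that
\begin{equation*}
\sum_{j \geqs 1} \lambda_j^{2s/k - 2t} < +\infty,
\end{equation*}
which is possible by \eqref{eq:asympeigenvalues}, since $\lambda_j \asymp j^{k/(d(1+\sigma))}$ implies the series converges as soon as $(2t - 2s/k)\,k/(d(1+\sigma)) > 1$. Combined with the decay hypothesis, this gives $\sum_j \lambda_j^{2s/k}|u_j|^2 < +\infty$, so by Proposition \ref{prop:modspacenorm} we have $u \in M_{\sigma,s}(\rr d)$ for every $s > 0$. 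Combined with the trivial inclusion for $s \leqs 0$ (since the $M_{\sigma,s}$ scale is decreasing in $s$), \eqref{eq:schwartzmodsp} yields $u \in \cS(\rr d)$.

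Finally, the equivalence of \eqref{eq:charschwartz1} and \eqref{eq:charschwartz2} is immediate from $\lambda_j \asymp j^{k/(d(1+\sigma))}$: any polynomial bound in $\lambda_j$ translates to a polynomial bound in $j$ with a different exponent, and the quantifier ``for every $s > 0$'' absorbs this change of exponent. I do not anticipate any serious obstacle; the only point requiring a moment's care is choosing $t$ as a function of $s$ in the converse direction to ensure summability, which is controlled by the exponent in \eqref{eq:asympeigenvalues}.
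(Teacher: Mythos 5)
Your argument is correct and is exactly the route the paper takes: the paper's proof is a one-line citation of \eqref{eq:charmodsp}, \eqref{eq:schwartzmodsp} and \eqref{eq:asympeigenvalues}, which are precisely the three ingredients you invoke. You merely spell out the summability bookkeeping that the paper leaves implicit.
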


\begin{proof}
The conclusion \eqref{eq:charschwartz1} is a consequence of \eqref{eq:charmodsp} and \eqref{eq:schwartzmodsp}, 
and \eqref{eq:charschwartz2} follows from \eqref{eq:asympeigenvalues}. 
\end{proof}

From Proposition \ref{prop:modspacenorm}, Remark \ref{rem:seminormschwartz} 
and Corollary \ref{cor:wellposedschwartz} it follows that 
the expansion \eqref{eq:expansion} converges in $\cS'(\rr d)$ when $u \in \cS'(\rr d)$
with coefficients \eqref{eq:coefficient}. 
Conversely if $(u_j)_{j \geqs 1} \subseteq \co$ is a sequence such that $|u_j| \lesssim j^{r}$
for some $r \geqs 0$ then the expansion \eqref{eq:expansion} converges in $\cS'(\rr d)$. 

\vskip0.2cm

\textit{Proof of Theorem \ref{thm:wptheorem1}.}
Let $u_0 \in M_{\sigma,s}(\rr d)$. 
We can write 
$u_0 = \sum_{j=1}^{+\infty} c_j \fy_j$ where the coefficients $c_j = (u_0, \fy_j)$ satisfy
\eqref{eq:charmodsp} in Proposition \ref{prop:modspacenorm}. 
Define for $t \in \ro$ the series 
\begin{equation}\label{eq:seriessolution}
u(t,x) = \sum_{j=1}^\infty c_j e^{ - i\lambda_j t } \fy_j(x). 
\end{equation}

Then $u(0,\cdot) = u_0$ and for $j \in \no \setminus 0$ we have 
\begin{equation}\label{eq:solutioncoeff}
( u(t,\cdot), \fy_j) = c_j e^{- i \lambda_j t } 
\end{equation}
so Proposition \ref{prop:modspacenorm} gives 
\begin{equation}\label{eq:solutionnorm}
\| u(t,\cdot) \|_{M_{\sigma,s}}^2
\asymp \sum_{j=1}^\infty \lambda_j^{\frac{2s}{k}} |c_j|^2 
\asymp \| u_0 \|_{M_{\sigma,s}}^2.
\end{equation}
The series \eqref{eq:seriessolution} hence defines a function
$u(t,\cdot) \in L^\infty(\ro, M_{\sigma,s}(\rr d) )$. 
Due to $M_{\sigma,0} = L^2$ it is clear that $\| u(t,\cdot) \|_{L^2} = \| u_0 \|_{L^2}$ for all $t \in \ro$. 

In the same way we have for $t,\tau \in \ro$
\begin{equation}\label{eq:modspdiff}
\| u(t + \tau,\cdot) - u(t,\cdot)\|_{M_{\sigma,s}}^2
\asymp \sum_{j=1}^\infty \lambda_j^{\frac{2s}{k}} |c_j|^2 \left| e^{- i \lambda_j \tau} - 1 \right|^2. 
\end{equation}
The assumption $u_0 \in M_{\sigma,s}(\rr d)$ and the dominated convergence theorem implies that 
$u(t,\cdot) \in \cB(\ro, M_{\sigma,s}(\rr d) )$. 

From \eqref{eq:solutioncoeff} we see that the operator $A$ acts on the series \eqref{eq:seriessolution} as
\begin{equation*}
A u(t,\cdot) = \sum_{j=1}^\infty \lambda_j ( u(t,\cdot), \fy_j) \fy_j 
= \sum_{j=1}^\infty \lambda_j c_j e^{- i \lambda_j t }  \fy_j. 
\end{equation*}
Since 
\begin{equation*}
\partial_t u(t,x) = - i \sum_{j=1}^\infty \lambda_j c_j e^{-i\lambda_j t } \fy_j (x) 
\end{equation*}
the series \eqref{eq:seriessolution} solves the Cauchy problem \eqref{eq:SchrCP}. 
Finally \eqref{eq:solutionnorm} gives
\begin{equation*}
\| \partial_t u(t,\cdot) \|_{M_{\sigma,s-k}}^2
\asymp \sum_{j=1}^\infty \lambda_j^{\frac{2(s-k)}{k}} | \lambda_j |^2 |c_j|^2 
\asymp \| u_0 \|_{M_{\sigma,s}}^2. 
\end{equation*}
This means that $\partial_t u(t, \cdot) \in L^\infty(\ro, M_{\sigma,s-k}(\rr d) )$, 
and it also gives the information that $\partial_t u + i A u=0$ holds in the space 
$L^\infty(\ro, M_{\sigma,s-k}(\rr d))$. 
As in \eqref{eq:modspdiff} it follows that $\partial_t u(t, \cdot) \in L^\infty(\ro, M_{\sigma,s-k}(\rr d) ) \cap C(\ro, M_{\sigma,s-k}(\rr d) )$
which proves the claim $u(t, \cdot) \in \cB^1 (\ro, M_{\sigma,s-k}(\rr d) )$. 
Finally the uniqueness of the solution is a consequence of \eqref{eq:solutionnorm}.
\qed

As consequences of Theorem \ref{thm:wptheorem1}, Remark \ref{rem:seminormschwartz} 
and \eqref{eq:temperedmodsp}
we get well-posedness in $\cS(\rr d)$ and in $\cS'(\rr d)$ in the following respective senses. 

\begin{corollary}\label{cor:wellposedness}
Let $A$ be a positive operator of the form \eqref{eq:diffoperator} that satisfies \eqref{eq:ellipticity}. 

\begin{itemize}

\item [(i)]
If $u_0 \in \cS(\rr d)$ then there exists a unique solution $u(t, \cdot) \in \cB^1 (\ro, \cS(\rr d))$ of \eqref{eq:SchrCP}. 

\item [(ii)]
If $u_0 \in \cS'(\rr d)$ then there exists $s \in \ro$ such that $u_0 \in M_{\sigma,s}(\rr d)$ and a unique solution 
$u(t, \cdot) \in \cB (\ro, M_{\sigma,s}(\rr d)) \cap \cB^1 (\ro, M_{\sigma,s - k}(\rr d))$ of \eqref{eq:SchrCP}.
\end{itemize}
\end{corollary}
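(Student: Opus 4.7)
The corollary is essentially a repackaging of Theorem \ref{thm:wptheorem1} together with the two scale identities \eqref{eq:schwartzmodsp} and \eqref{eq:temperedmodsp}. The underlying construction is the same in both parts: diagonalize $A$ in its eigenbasis $(\fy_j)_{j \geqs 1}$ and represent the candidate solution by the explicit series \eqref{eq:seriessolution}, already used in the proof of Theorem \ref{thm:wptheorem1}. Since the series is uniquely determined by the coefficients $c_j = (u_0, \fy_j)$, solutions obtained from Theorem \ref{thm:wptheorem1} on different scales $M_{\sigma, s}(\rr d)$ automatically agree whenever they are simultaneously defined; that observation is the key to promoting the $M_{\sigma,s}$-valued statement to an $\cS$-valued or $\cS'$-valued one.

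Part (ii) I would handle first, as it is the shortest. Given $u_0 \in \cS'(\rr d)$, \eqref{eq:temperedmodsp} produces some $s \in \ro$ with $u_0 \in M_{\sigma,s}(\rr d)$, and Theorem \ref{thm:wptheorem1} applied to this $s$ delivers the unique solution in $\cB(\ro, M_{\sigma,s}(\rr d)) \cap \cB^1(\ro, M_{\sigma, s-k}(\rr d))$. Uniqueness in the ambient space $\cS'(\rr d)$ follows because any other candidate solution is itself, by \eqref{eq:temperedmodsp}, in some $M_{\sigma,t}(\rr d)$, hence also in $M_{\sigma, \min(s,t)}(\rr d)$, and Theorem \ref{thm:wptheorem1} applied on that level forces it to equal the one already constructed.

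For part (i), the input $u_0 \in \cS(\rr d)$ lies in $M_{\sigma,s}(\rr d)$ for every $s \in \ro$ by \eqref{eq:schwartzmodsp}. Applying Theorem \ref{thm:wptheorem1} separately for each $s$ produces solutions that, by the uniqueness just discussed, all coincide with the single series \eqref{eq:seriessolution}. Thus $u(t,\cdot) \in \cB(\ro, M_{\sigma,s}(\rr d))$ for every $s$, and the same holds for $\partial_t u(t,\cdot) \in \cB(\ro, M_{\sigma,s-k}(\rr d))$ for every $s$. Combining this with \eqref{eq:schwartzmodsp} and the fact, recorded in Remark \ref{rem:seminormschwartz}, that the norms $\| \cdot \|_{M_{\sigma,s}}$ for $s \geqs 0$ generate the Fr\'echet topology of $\cS(\rr d)$, one upgrades boundedness and continuity on each level to boundedness and continuity with values in $\cS(\rr d)$. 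This yields $u \in \cB^1(\ro, \cS(\rr d))$, and uniqueness carries over from uniqueness on any fixed level $M_{\sigma,s}(\rr d)$.

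The only mildly delicate point is the topological upgrade in (i): one has to check that boundedness on every $M_{\sigma,s}$-scale is the same as boundedness in the Fr\'echet topology of $\cS(\rr d)$, and similarly for continuity in $t$. Both statements are immediate from the seminorm description in Remark \ref{rem:seminormschwartz}, so no genuine obstacle remains.
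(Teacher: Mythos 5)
Your proposal is correct and follows exactly the route the paper intends: the paper gives no explicit proof of Corollary~\ref{cor:wellposedness}, but merely points to Theorem~\ref{thm:wptheorem1}, Remark~\ref{rem:seminormschwartz}, and \eqref{eq:temperedmodsp}, and your argument is precisely what unpacking those references amounts to. The one step you flag as "mildly delicate" — that simultaneous membership/boundedness/continuity in all $M_{\sigma,s}(\rr d)$ is equivalent to membership/boundedness/continuity in the Fr\'echet space $\cS(\rr d)$ — is indeed the only nontrivial content, and you correctly resolve it via the seminorm description in Remark~\ref{rem:seminormschwartz} together with \eqref{eq:schwartzmodsp}; no gap.
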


Next we formulate a version of Theorem \ref{thm:wptheorem1} for the inhomogeneous equation
\begin{equation}\label{eq:SchrCPinhomo}
\begin{cases} 
\partial_t u (t,x) + i A u(t,x) = f(t,x) \\ 
u(0,x) = u_0(x) 
\end{cases}, \qquad (t,x) \in (-T,T) \times \rr d 
\end{equation}
where $T > 0$.

\begin{theorem}\label{thm:wptheorem3}
Let $T > 0$ and let $A$ be a positive operator of the form \eqref{eq:diffoperator} that satisfies \eqref{eq:ellipticity}. 
If $s \in \ro$, $f(t,\cdot) \in C((-T,T), M_{\sigma,s}(\rr d))$ and $u_0 \in M_{\sigma,s}(\rr d)$, 
then there exists a unique solution $u(t,\cdot) \in C((-T,T), M_{\sigma,s}(\rr d))\cap C^1((-T,T), M_{\sigma,s-k}(\rr d))$  
of \eqref{eq:SchrCPinhomo}. 
\end{theorem}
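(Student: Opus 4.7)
The plan is to reduce the inhomogeneous problem to an infinite system of scalar ODEs by expanding everything in the eigenbasis $(\fy_j)_{j \geqs 1}$ of $A$, and then to apply Proposition \ref{prop:modspacenorm} to convert coefficient estimates into $M_{\sigma,s}$-estimates. Write $c_j = (u_0, \fy_j)$ and $f_j(t) = (f(t,\cdot), \fy_j)$; by hypothesis $f_j \in C((-T,T), \co)$ and $\sum_j \lambda_j^{2s/k} |f_j(t)|^2 \asymp \| f(t,\cdot) \|_{M_{\sigma,s}}^2$ is locally bounded in $t$. Formally, a solution $u(t,x) = \sum_j u_j(t) \fy_j(x)$ of \eqref{eq:SchrCPinhomo} must satisfy $u_j'(t) + i \lambda_j u_j(t) = f_j(t)$ with $u_j(0) = c_j$, so Duhamel's formula gives
\begin{equation*}
u_j(t) = c_j e^{-i \lambda_j t} + \int_0^t e^{-i \lambda_j (t-\tau)} f_j(\tau) \, \dd \tau, \qquad t \in (-T,T).
\end{equation*}

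The candidate solution is
\begin{equation*}
u(t,x) = \sum_{j=1}^{+\infty} u_j(t) \fy_j(x),
\end{equation*}
and the first step is to verify that this series converges in $M_{\sigma,s}(\rr d)$ uniformly on compact subintervals $[-T',T'] \subset (-T,T)$. By Minkowski's integral inequality applied coefficient-wise and Proposition \ref{prop:modspacenorm}, one has
\begin{equation*}
\| u(t,\cdot) \|_{M_{\sigma,s}} \lesssim \| u_0 \|_{M_{\sigma,s}} + \int_0^{|t|} \| f(\tau,\cdot) \|_{M_{\sigma,s}} \, \dd \tau,
\end{equation*}
so the partial sums form a Cauchy family in $C([-T',T'], M_{\sigma,s}(\rr d))$ because each tail can be estimated by $(\sum_{j \geqs N} \lambda_j^{2s/k} |c_j|^2)^{1/2}$ plus an integral of an analogous tail in $f$, both of which tend to $0$ uniformly in $t$ by dominated convergence (applied, for $f$, to the integrand which is bounded by the integrable majorant $\sup_{|\tau| \leqs T'} \|f(\tau,\cdot)\|_{M_{\sigma,s}}$). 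Continuity of $t \mapsto u(t,\cdot)$ in $M_{\sigma,s}$ then follows from continuity of each $u_j(t)$ combined with the same dominated-convergence argument, as in the proof of Theorem \ref{thm:wptheorem1}.

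Next I would differentiate termwise: since $u_j'(t) = f_j(t) - i \lambda_j u_j(t)$, the estimate $\lambda_j^{2(s-k)/k} \lambda_j^2 = \lambda_j^{2s/k}$ shows that the series $\sum_j (-i \lambda_j u_j(t)) \fy_j$ converges in $M_{\sigma, s-k}(\rr d)$ with norm controlled by $\| u(t,\cdot) \|_{M_{\sigma,s}}$ via Proposition \ref{prop:modspacenorm}, while $\sum_j f_j(t) \fy_j = f(t,\cdot)$ converges in $M_{\sigma,s} \subseteq M_{\sigma,s-k}$. Both sums are continuous in $t$ with values in $M_{\sigma,s-k}$, so $\partial_t u \in C((-T,T), M_{\sigma,s-k}(\rr d))$ and the equation \eqref{eq:SchrCPinhomo} holds in $M_{\sigma,s-k}$. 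Uniqueness is immediate: if $v$ solves \eqref{eq:SchrCPinhomo} with $u_0 = 0$, $f = 0$, pairing with $\fy_j$ gives $v_j'(t) + i \lambda_j v_j(t) = 0$ with $v_j(0) = 0$, so $v_j \equiv 0$, whence $v \equiv 0$.

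The main obstacle is purely bookkeeping: matching the pointwise-in-$t$ coefficient estimates to norm convergence of the partial sums in the two different regularity scales $M_{\sigma,s}$ and $M_{\sigma,s-k}$ \emph{uniformly} on compacts, while only assuming continuity (not uniform continuity) of $f$ into $M_{\sigma,s}$. This is handled by restricting to compact subintervals of $(-T,T)$, where continuity upgrades to uniform continuity and to a uniform bound on $\|f(\tau,\cdot)\|_{M_{\sigma,s}}$, legitimizing dominated convergence in both the convergence and continuity arguments above.
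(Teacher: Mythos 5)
Your proposal is correct and follows essentially the same approach as the paper's own proof: expanding in the eigenbasis $(\fy_j)$, applying Duhamel's formula to the resulting scalar ODEs, and converting coefficient estimates to $M_{\sigma,s}$-norm estimates via Proposition \ref{prop:modspacenorm}. The only cosmetic differences are that the paper uses Cauchy--Schwarz on each $|u_j(t)|^2$ (yielding $|u_j(t)|^2 \leqs 2|c_j|^2 + 4|t|\int_{-|t|}^{|t|}|f_j(\tau)|^2\,\dd\tau$) where you use Minkowski's integral inequality to get the cleaner Gronwall-type bound $\|u(t,\cdot)\|_{M_{\sigma,s}} \lesssim \|u_0\|_{M_{\sigma,s}} + \int_0^{|t|}\|f(\tau,\cdot)\|_{M_{\sigma,s}}\,\dd\tau$, and that you spell out the compact-exhaustion/dominated-convergence bookkeeping more explicitly; both are minor packaging choices.
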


\begin{proof}
Let $c_j = ( u_0, \fy_j)$ and $f_j (t) = ( f(t,\cdot), \fy_j ) \in C((-T,T), \co)$ for $j \geqs 1$. 
Define for $t \in (-T,T)$ the series 
\begin{equation}\label{eq:seriessolution2}
u(t,x) = \sum_{j=1}^\infty e^{ - i\lambda_j t } \left( c_j +  \int_0^t f_j(\tau) e^{i\lambda_j \tau} \, \dd \tau \right) \fy_j(x)
\end{equation}
which satisfies $u(0,\cdot) = u_0$. 
Consider the coefficient functions for $j \geqs 1$
\begin{equation}\label{eq:funccoeff}
u_j(t) = ( u(t,\cdot), \fy_j )
= e^{ - i\lambda_j t } \left( c_j +  \int_0^t f_j(\tau) e^{i\lambda_j \tau} \, \dd \tau \right). 
\end{equation}

From \eqref{eq:funccoeff} we get 
\begin{equation*}
|u_j(t)| \leqs |c_j | +  \int_{-|t|}^{|t|} | f_j(\tau) | \, \dd \tau
\end{equation*}
and the Cauchy--Schwarz inequality implies
\begin{equation*}
|u_j(t)|^2 \leqs 2 |c_j |^2 +  4 |t|  \int_{-|t|}^{|t|} | f_j(\tau) |^2 \, \dd \tau. 
\end{equation*}
The assumptions $f(t, \cdot) \in C((-T,T), M_{\sigma,s}(\rr d))$, $u_0 \in M_{\sigma,s}(\rr d)$, and 
Proposition \ref{prop:modspacenorm}
then yield $ u(t,\cdot) \in L^\infty( (-T,T), M_{\sigma,s}(\rr d))$. 

From \eqref{eq:funccoeff} we obtain for $t, \nu \in (-T,T)$ and $j \geqs 1$
\begin{align*}
& u_j(t + \nu) - u_j (t) \\
& = e^{ - i\lambda_j t } \left( \left( e^{ - i\lambda_j \nu }-1 \right) \left( c_j +  \int_0^t f_j(\tau) e^{i\lambda_j \tau} \dd \tau \right) 
+ e^{ - i\lambda_j \nu } \int_t^{t+\nu} f_j(\tau) e^{i\lambda_j \tau} \dd \tau \right)
\end{align*}
which gives
\begin{equation*}
|u_j(t + \nu) - u_j (t)| 
\leqs \left| e^{ - i\lambda_j \nu }-1 \right| \left( |c_j| +  \int_{-|t|}^{|t|} |f_j(\tau)| \dd \tau \right) 
+ \int_t^{t+\nu} |f_j(\tau) |  \dd \tau.
\end{equation*}
From this it follows that $u(t, \cdot) \in C( (-T,T), M_{\sigma,s}(\rr d))$. 
Finally we have for all $j \geqs 1$
\begin{align*}
(\partial_t u(t,x) , \fy_j)
& = - i \lambda_j e^{ - i\lambda_j t } \left( c_j +  \int_0^t f_j(\tau) e^{i\lambda_j \tau} \, \dd \tau \right)
+ f_j (t) \\
& = - i ( A u(t,\cdot) + f (t,\cdot), \fy_j). 
\end{align*}
This shows that $u$ is a solution to \eqref{eq:SchrCPinhomo}, 
and due to the continuity $A: M_{\sigma,s}(\rr d) \to M_{\sigma,s-k}(\rr d)$ we have 
$\partial_t u  + i A u = f$ in $C( (-T,T), M_{\sigma,s-k}(\rr d))$. 
In particular we have $\partial_t u \in C((-T,T), M_{\sigma,s-k}(\rr d))$. 

The uniqueness of the solution follows from the observation that 
$u_0 = 0$ and $f = 0$ imply $u = 0$. 
\end{proof}

\begin{corollary}\label{cor:wellposednessinhom}
Let $A$ be a positive operator of the form \eqref{eq:diffoperator} that satisfies \eqref{eq:ellipticity}. 

\begin{itemize}

\item [(i)]
If $u_0 \in \cS(\rr d)$ and $f \in C((-T,T), \mathscr{S}(\rr d))$ then there exists a unique solution $u(t, \cdot) \in C^1 ((-T,T), \cS(\rr d))$ of \eqref{eq:SchrCPinhomo}. 

\item [(ii)]
If $u_0 \in \cS'(\rr d)$ then there exists $s \in \ro$ such that $u_0 \in M_{\sigma,s}(\rr d)$. 
If $f \in C((-T,T), M_{\sigma,s}(\rr d) )$ then there exists a unique solution 
$u(t, \cdot) \in C ((-T,T), M_{\sigma,s}(\rr d)) \cap C^1 ((-T,T), M_{\sigma,s - k}(\rr d))$
of \eqref{eq:SchrCPinhomo}. 
\end{itemize}
\end{corollary}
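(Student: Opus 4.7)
The plan is to derive both parts of the corollary as essentially immediate consequences of Theorem \ref{thm:wptheorem3}, together with the characterizations \eqref{eq:schwartzmodsp} and \eqref{eq:temperedmodsp} of the Schwartz space and of tempered distributions in terms of the anisotropic Shubin--Sobolev modulation spaces $M_{\sigma,s}(\rr d)$.

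Part (ii) requires essentially no new work. If $u_0\in\cS'(\rr d)$, then by \eqref{eq:temperedmodsp} there exists some $s\in\ro$ such that $u_0\in M_{\sigma,s}(\rr d)$. Since by assumption $f\in C((-T,T),M_{\sigma,s}(\rr d))$ for this same $s$, Theorem \ref{thm:wptheorem3} applies directly and yields a unique solution $u\in C((-T,T),M_{\sigma,s}(\rr d))\cap C^1((-T,T),M_{\sigma,s-k}(\rr d))$.

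For part (i), the strategy is to apply Theorem \ref{thm:wptheorem3} at every regularity level and glue the results using the Fr\'echet-space characterization \eqref{eq:schwartzmodsp} of $\cS(\rr d)$, which says that $\cS(\rr d)=\bigcap_{s\in\ro}M_{\sigma,s}(\rr d)$ with topology equivalent to the one induced by the family of norms $\{\|\cdot\|_{M_{\sigma,s}}\}_{s\in\ro}$. First I would observe that $u_0\in\cS(\rr d)$ and $f\in C((-T,T),\cS(\rr d))$ imply $u_0\in M_{\sigma,s}(\rr d)$ and $f\in C((-T,T),M_{\sigma,s}(\rr d))$ for \emph{every} $s\in\ro$. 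Theorem \ref{thm:wptheorem3} then produces, for each $s$, a solution $u^{(s)}\in C((-T,T),M_{\sigma,s}(\rr d))\cap C^1((-T,T),M_{\sigma,s-k}(\rr d))$ of \eqref{eq:SchrCPinhomo}. Since the uniqueness clause in Theorem \ref{thm:wptheorem3} at level $s=0$ (viewed in the larger space $C((-T,T),M_{\sigma,-k}(\rr d))$) forces all these $u^{(s)}$ to coincide, we obtain a single function $u$ with $u(t,\cdot)\in\bigcap_{s\in\ro}M_{\sigma,s}(\rr d)=\cS(\rr d)$ and $\partial_t u(t,\cdot)\in\bigcap_{s\in\ro}M_{\sigma,s-k}(\rr d)=\cS(\rr d)$ for every $t\in(-T,T)$.

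It remains to upgrade pointwise membership in $\cS(\rr d)$ to continuity and $C^1$-regularity with values in $\cS(\rr d)$. This follows from the Fr\'echet structure: $u\in C^1((-T,T),\cS(\rr d))$ is equivalent to $u,\partial_t u\in C((-T,T),M_{\sigma,s}(\rr d))$ for every $s\in\ro$, and both statements are guaranteed by applying Theorem \ref{thm:wptheorem3} once at level $s$ (for $u$) and once at level $s+k$ (for $\partial_t u$, which lives in $M_{\sigma,s}$). Uniqueness in $C^1((-T,T),\cS(\rr d))$ is inherited from uniqueness in any one of the $M_{\sigma,s}$-valued classes. I do not anticipate any serious obstacle here; the only minor point worth checking carefully is the consistency of the solutions at different levels $s$, which is settled by the uniqueness statement of Theorem \ref{thm:wptheorem3}.
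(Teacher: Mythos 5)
Your proposal is correct and follows the route the paper implicitly intends: part (ii) is an immediate application of Theorem \ref{thm:wptheorem3} together with \eqref{eq:temperedmodsp}, and part (i) applies Theorem \ref{thm:wptheorem3} at every level $s$, uses uniqueness to identify all the solutions, and invokes the Fr\'echet characterization \eqref{eq:schwartzmodsp} (and the accompanying seminorm description of the $\cS(\rr d)$ topology) to upgrade continuity and $C^1$ regularity at each level $s$ to continuity and $C^1$ regularity with values in $\cS(\rr d)$. The paper leaves this corollary unproved precisely because this is the expected argument, so no discrepancy to report.
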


To prove Theorem \ref{thm:wptheorem2}, that is well-posedness in Gelfand--Shilov spaces for the Cauchy problem \eqref{eq:anharmonicCP}, we need a characterization of the Gelfand--Shilov spaces analogous to the expression 
\eqref{eq:schwartzmodsp} for the Schwartz space
combined with Proposition \ref{prop:modspacenorm} with 
$s \geqs 0$. 
The following result is a consequence of \cite[Theorem~1.3]{CGPR}. 

\begin{proposition}\label{prop:GSseminorm}
Let $k,m \in 2\no \setminus 0$ and define the operator $A$ by \eqref{eq:diffoperator}.
Suppose $A$ is positive and elliptic in the sense of \eqref{eq:ellipticity}, 
let $(\fy_j)_{j \geqs 1} \subseteq \mathcal{S}_{\frac{m}{k+m}}^{\frac{k}{k+m}}(\rr d)$ be its orthonormal basis of eigenfunctions, 
and let $( \lambda_j )_{j \geqs 1} \subseteq \ro_+$ be its eigenvalues. 
Define $\nu$ and $\mu$ by \eqref{eq:muenu} with $r \geqs 1$, let $u \in \cS(\rr d)$ and $u_j = (u, \fy_j)$ for $j \geqs 1$ and set $\rho = \frac{k+m}{k m r}$. 
The following conditions are equivalent: 

\begin{itemize}

\item[(i)] $u \in \mathcal S_\nu^\mu(\rr d)$ ($u \in \Sigma_\nu^\mu(\rr d)$ and $r > 1$)

\item[(ii)] There exists $\alpha > 0$ such that (for every $\alpha > 0$ we have)
\begin{equation*}
\sup_{j \geqs 1} |u_j|^2 e^{ \alpha \lambda_j^\rho} < + \infty.  
\end{equation*}
\end{itemize}
\end{proposition}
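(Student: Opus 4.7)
The plan is to apply \cite[Theorem~1.3]{CGPR}, which provides precisely this type of eigenfunction characterization for Gelfand--Shilov spaces. The first step is to verify that our hypotheses match: the operator $A$ in \eqref{eq:diffoperator} is positive, its Weyl symbol belongs to $G^{k,\sigma}$ and satisfies the anisotropic ellipticity estimate \eqref{eq:ellipticity2}, its eigenfunctions $\fy_j$ lie in $\mathcal{S}_{m/(k+m)}^{k/(k+m)}(\rr d)$, and the eigenvalues obey the asymptotics \eqref{eq:asympeigenvalues}. These are exactly the ingredients used in CGPR.

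The argument underlying their theorem, for the Roumieu case (i) $\Leftrightarrow$ (ii), splits into two directions. For (ii) $\Rightarrow$ (i), the key is a quantitative Gelfand--Shilov estimate on the eigenfunctions of the form
\begin{equation*}
\| \fy_j \|_{\mathcal{S}_{m/(k+m),h}^{k/(k+m)}} \lesssim e^{C \lambda_j^{(k+m)/(km)}},
\end{equation*}
obtained by iterating the elliptic regularity for $A-\lambda_j I$ in the Gelfand--Shilov scale (cf. \cite{CGR2}). Combined with the assumed exponential decay of $u_j$ and a routine summation, this yields $u = \sum_j u_j \fy_j \in \mathcal{S}_\nu^\mu$ for $\mu,\nu$ as in \eqref{eq:muenu}. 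For the converse (i) $\Rightarrow$ (ii), I would exploit that the polynomial-coefficient operator $A$ acts on $\mathcal{S}_\nu^\mu$ with controlled seminorm growth, yielding a bound $\| A^N u \|_{L^2} \lesssim C^{N+1} (N!)^r$ for all $N \in \no$. Since the $\fy_j$ are orthonormal, $|u_j| = \lambda_j^{-N} |(A^N u, \fy_j)| \leqs \lambda_j^{-N} \|A^N u\|_{L^2}$, and optimizing $N$ against $\lambda_j$ via Stirling delivers the decay $|u_j|^2 e^{\alpha \lambda_j^\rho} < + \infty$ with the stated $\rho = (k+m)/(kmr)$. The Beurling case $\Sigma_\nu^\mu$ with $r>1$ is handled analogously, tracking that a small parameter in the Gelfand--Shilov seminorms permits $\alpha$ to be taken arbitrarily large.

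The main obstacle is securing the precise exponent $\rho = (k+m)/(kmr)$, which requires the full anisotropic Shubin calculus from \cite{CGR2} rather than the isotropic reduction in Remark \ref{rem:Shubin}; the latter would only yield a strictly weaker exponent. A consistency check is that combining $\rho$ with \eqref{eq:asympeigenvalues} gives $\lambda_j^\rho \asymp j^{1/(dr)}$, so (ii) is equivalent to $|u_j| \lesssim e^{-\alpha' j^{1/(dr)}}$, which is the classical coefficient decay expected for Gelfand--Shilov spaces whose indices sum to $r$. Finally I would note that on $\cS(\rr d)$ both conditions reduce to polynomial decay of $(u_j)$ as in Corollary \ref{cor:wellposedschwartz}, ensuring compatibility across the scale of spaces.
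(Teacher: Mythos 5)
Your proposal reduces to citing \cite[Theorem~1.3]{CGPR}, which is exactly what the paper does; the paper's entire proof is that one-line reference, so the core approach is identical.

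Your supplementary sketch of the argument inside CGPR, however, contains a quantitative slip in the direction (i) $\Rightarrow$ (ii). You claim $\|A^N u\|_{L^2} \lesssim C^{N+1}(N!)^r$, and that optimizing this against $\lambda_j^{-N}$ gives the decay rate $\rho = (k+m)/(kmr)$. Those two statements are inconsistent: if $\|A^N u\| \lesssim C^{N+1}(N!)^s$, then optimizing $\lambda_j^{-N}C^{N+1}(N!)^s$ over $N$ (via $N \approx (\lambda_j/C)^{1/s}$) yields $|u_j| \lesssim e^{-c\,\lambda_j^{1/s}}$, so to land on $\rho = (k+m)/(kmr)$ you need $s = kmr/(k+m)$, not $s = r$. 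The correct estimate is in fact $\|A^N u\|_{L^2} \lesssim C^{N+1}(N!)^{kmr/(k+m)}$; one sees this by noting that $A^N$ produces terms $x^\beta D^\alpha$ constrained by $k|\alpha| + m|\beta| \leqs Nkm$, and the Gelfand--Shilov bound $\beta!^\nu\alpha!^\mu$ on such a term is maximized at $|\alpha| = |\beta| = Nkm/(k+m)$ (using $k\nu = m\mu = kmr/(k+m)$), giving $(N!)^{\mu+\nu}\cdot\big(\tfrac{km}{k+m}\big)^{\mu+\nu}$ type growth per step, i.e.\ exponent $kmr/(k+m)$. Your value $(N!)^r$ would only be right when $km = k+m$, e.g.\ $k=m=2$. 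This does not undermine the proposal as a whole, since the paper itself does not expand CGPR's internal argument, but the explicit estimate you wrote would not yield the stated $\rho$.

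Your closing consistency check is correct: with $\lambda_j \asymp j^{km/(d(k+m))}$ one does get $\lambda_j^\rho \asymp j^{1/(dr)}$, matching the expected coefficient decay for Gelfand--Shilov indices summing to $r$, and the Beurling variant for $r>1$ proceeds by tracking the $h$-dependence as you indicate.
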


\vskip0.2cm

\textit{Proof of Theorem \ref{thm:wptheorem2}.}
We prove the result for initial datum $u_0 \in \mathcal{S}_\nu^\mu (\rr d)$ where $\mu, \nu$ are defined by \eqref{eq:muenu} and $r \geqs 1$. 
The argument for $\Sigma_\nu^\mu (\rr d)$ and $r > 1$ is similar.
The idea of the proof is the same as for Theorem \ref{thm:wptheorem1}. 

Let $u_0 \in \mathcal{S}_\nu^\mu(\rr d)$. 
If $c_j = (u_0, \fy_j)$ then $u_0 = \sum_{j=1}^{+\infty} c_j \fy_j$ where the coefficients satisfy
\begin{equation}\label{eq:GelfandShilovcoeff}
\sup_{j \geqs 1} |c_j|^2 e^{ \alpha \lambda_j^\rho} 
< + \infty
\end{equation}
for some $\alpha > 0$ by Proposition \ref{prop:GSseminorm}. 
Define for $t \in \ro$ the series 
\begin{equation*}
u(t,x) = \sum_{j=1}^\infty c_j e^{ - i\lambda_j t } \fy_j(x). 
\end{equation*}
Then $u(0,\cdot) = u_0$ and $u(t,\cdot)$ solves the Cauchy problem \eqref{eq:SchrCP} as shown in the proof of Theorem \ref{thm:wptheorem1}. 

By Proposition \ref{prop:GSseminorm} it follows as in the proof of Theorem \ref{thm:wptheorem1} that 
$u(t, \cdot) \in \mathcal{B}(\ro, \mathcal{S}_{\nu}^\mu(\rr d))$. 
From $(\partial_t u(t,\cdot), \fy_j) = - i  c_j \lambda_j e^{-i\lambda_j t }$
we obtain for any $\ep > 0$
\begin{equation*}
\sup_{ j \geqs 1 } | (\partial_t u(t,\cdot), \fy_j) |^2 e^{  (\alpha-\ep) \lambda_j^\rho} 
= \sup_{ j \geqs 1 }  \lambda_j^2 | c_j  |^2 e^{ (\alpha-\ep) \lambda_j^\rho} 
< + \infty
\end{equation*}
by \eqref{eq:GelfandShilovcoeff}. 
This implies $\partial_t u(t, \cdot)\in \mathcal{B}(\ro, \mathcal{S}_{\nu}^\mu(\rr d))$. 
\qed

\section{The Hamilton flow for a class of generalized anharmonic oscillators}\label{sec:Hamiltonflow}

In this section we compute the Hamilton flow for 
a class of generalized anharmonic oscillator Hamiltonians in dimension $d=1$, 
and we discuss some consequences. 
Consider the Hamiltonian 
\begin{equation}\label{eq:hamiltonian}
a(x,\xi) = ( x^{2k} + \xi^{2m} )^p, \quad x,\xi \in \ro, 
\end{equation}
where $k,m \in \no \setminus 0$ and $p \in \ro \setminus 0$. 
The Hamilton system is 
\begin{equation}\label{eq:hamilton1}
\left\{
\begin{array}{l}
x' (t) = 2 m p \left( x(t)^{2k} + \xi(t)^{2m} \right)^{p-1} \xi(t)^{2m-1} \\
\xi' (t) = -2 k p \left( x(t)^{2k} + \xi(t)^{2m} \right)^{p-1} x(t)^{2k-1} \\
x(0) = y \\
\xi(0) = \eta 
\end{array}
\right. .
\end{equation}
If $y = \eta = 0$ then the unique solution is $x(t) \equiv 0 \equiv \xi(t)$,
so we may assume that 
\begin{equation}\label{eq:initialdatum}
c := y^{2k} + \eta^{2m} > 0.
\end{equation}
The Hamiltonian is preserved along solution curves \cite{Arnold1} so 
\eqref{eq:hamilton1} simplifies into
\begin{equation}\label{eq:hamilton2}
\left\{
\begin{array}{l}
x' (t) = 2 m p c^{p-1} \xi(t)^{2m-1} \\
\xi' (t) = -2 k p c^{p-1} x(t)^{2k-1} \\
x(0) = y \\
\xi(0) = \eta
\end{array}
\right. .
\end{equation}

The well known Picard--Lindel\"of theorem implies that there exists a unique solution to \eqref{eq:hamilton2} for any $(y,\eta) \in \rr 2 \setminus 0$, 
in an open interval $I = I_{y,\eta} \subseteq \ro$ containing $t = 0$, which is smooth as a function of $t$ \cite{Hartman1}.  
In the sequel we show that the solution is periodic and thus well defined for all $t \in \ro$, 
for all initial data $(y,\eta) \in \rr 2 \setminus 0$. 

We use the function indexed by $k,m \in \no \setminus 0$
\begin{equation}\label{eq:gdef}
g_{k,m}(x) = \int_0^x \left( 1 - t^{2k} \right)^{\frac1{2m} - 1} \dd t. 
\end{equation}

Writing $1 - t^{2k} = (1 - t ^2) ( 1 + t^2 + \cdots + t^{2(k-1)} )$
we see that the singularities of the integrand in \eqref{eq:gdef} 
$t = 1^-$ and at $t = -1^+$ are integrable, since $\frac12 \leqs 1-\frac1{2m} < 1$. 
Thus $g_{k,m}: [-1,1] \to \ro$ is a continuous, odd and strictly increasing function with 
\begin{equation}\label{eq:glimit}
\lim_{x \to 1^-} g_{k,m}(x) = - \lim_{x \to -1^+} g_{k,m}(x) := \tau_{k,m} > 0.  
\end{equation}
It follows that $g_{k,m}^{-1}: [-\tau_{k,m},\tau_{k,m}] \to [-1,1]$ is continuous, odd and strictly increasing. 

In the following lemma we identify the function $g_{k,m}$, find an expression for $\tau_{k,m}$, and we figure out how
$\tau_{k,m}$ and $g_{k,m}$ transform with respect to an interchange of indices. 

\begin{lemma}\label{lem:ellipticfunction}
Let $k,m \in \no \setminus 0$, define $g_{k,m}$ by \eqref{eq:gdef}, 
and let $\tau_{k,m} > 0$ 
be defined by \eqref{eq:glimit}. 
Then 
\begin{equation}\label{eq:incompletebeta}
g_{k,m}(x) = \frac1{2k} B\left( x^{2k}, \frac{1}{2k}, \frac{1}{2m}\right)
\end{equation}
where $B(x,p,q)$ is the incomplete beta function, 
\begin{equation}\label{eq:tau}
\tau_{k,m} = \frac{\Gamma \left( \frac1{2k}\right) \Gamma \left( \frac1{2m}\right)}{2 k \Gamma \left( \frac1{2k} + \frac1{2m} \right)}, 
\end{equation}
$\tau_{m,k} = \frac{k}{m} \tau_{k,m}$, and 
\begin{equation}\label{eq:hexprg}
g_{m,k}(x) = \sgn(x) \frac{k}{m} \left( \tau_{k,m} -  g_{k,m} \left( (1-x^{2m} )^{\frac1{2k}} \right) \right), \quad -1 \leqs x \leqs 1. 
\end{equation}
\end{lemma}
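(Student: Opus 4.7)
The plan is to reduce everything to the (incomplete) beta function via an explicit substitution, then extract the remaining identities as corollaries.

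First I would prove \eqref{eq:incompletebeta} by the substitution $u = t^{2k}$ in \eqref{eq:gdef}, which gives $dt = \frac{1}{2k} u^{\frac{1}{2k} - 1}\, du$ and transforms the integrand into $u^{\frac{1}{2k}-1} (1-u)^{\frac{1}{2m}-1}$; recognizing this as the integrand of the incomplete beta function $B(x,p,q) = \int_0^x u^{p-1}(1-u)^{q-1}\, du$ yields $g_{k,m}(x) = \frac{1}{2k} B(x^{2k}, \frac{1}{2k}, \frac{1}{2m})$ immediately. Then \eqref{eq:tau} follows by letting $x \to 1^-$ and using the classical identity $B(p,q) = \Gamma(p)\Gamma(q)/\Gamma(p+q)$. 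The symmetry $B(p,q) = B(q,p)$ of the complete beta function then gives $\tau_{m,k}/\tau_{k,m} = (1/(2m))/(1/(2k)) = k/m$, which is the claim $\tau_{m,k} = \frac{k}{m}\tau_{k,m}$.

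The main (and only slightly delicate) step is \eqref{eq:hexprg}. Since $g_{m,k}$ is odd it suffices to treat $x \in [0,1]$. In the integral $g_{m,k}(x) = \int_0^x (1-t^{2m})^{\frac{1}{2k}-1}\, dt$ I would perform the substitution $s = (1-t^{2m})^{\frac{1}{2k}}$, so that $s^{2k} = 1 - t^{2m}$ and $t = (1-s^{2k})^{\frac{1}{2m}}$. Differentiating yields $2m t^{2m-1}\, dt = -2k s^{2k-1}\, ds$, and combining this with $(1-t^{2m})^{\frac{1}{2k}-1} = s^{1-2k}$ gives after simplification
\begin{equation*}
(1-t^{2m})^{\frac{1}{2k}-1}\, dt = -\frac{k}{m} (1-s^{2k})^{\frac{1}{2m}-1}\, ds.
\end{equation*}
Since $t = 0$ corresponds to $s = 1$ and $t = x$ to $s = (1-x^{2m})^{\frac{1}{2k}}$, integrating and reversing the limits yields
\begin{equation*}
g_{m,k}(x) = \frac{k}{m} \int_{(1-x^{2m})^{\frac{1}{2k}}}^1 (1-s^{2k})^{\frac{1}{2m}-1}\, ds = \frac{k}{m}\bigl(\tau_{k,m} - g_{k,m}\bigl((1-x^{2m})^{\frac{1}{2k}}\bigr)\bigr),
\end{equation*}
which is \eqref{eq:hexprg} on $[0,1]$. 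Oddness of $g_{m,k}$ (and the fact that the right-hand side also flips sign under $x \mapsto -x$) extends the identity to $[-1,1]$ with the factor $\sgn(x)$.

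The only point that requires a small amount of care is verifying that the singularities of the integrand at the endpoints remain integrable throughout these manipulations, which is handled by the observation already made in the text that $\frac{1}{2}\leqs 1-\frac{1}{2m} < 1$, so the substitution is valid as an improper Riemann integral. Apart from this, the proof is a direct computation, and the hardest part is keeping track of exponents in the substitution for \eqref{eq:hexprg}.
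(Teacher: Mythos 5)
Your proof is correct and takes essentially the same approach as the paper: both reduce $g_{k,m}$ to the incomplete beta function by a power substitution and deduce \eqref{eq:tau} and $\tau_{m,k}=\frac{k}{m}\tau_{k,m}$ from the Gamma-function identity and symmetry of the complete beta function. For \eqref{eq:hexprg} you perform the single substitution $s=(1-t^{2m})^{1/(2k)}$ directly in the defining integral for $g_{m,k}$, whereas the paper first brings both $g_{k,m}$ and $g_{m,k}$ into beta-integrand form and then applies $t\mapsto 1-t$; these are the same composite change of variables carried out in a different order, so the arguments are equivalent in content.
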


\begin{proof}
After a change of variable we may write if $0 \leqs x \leqs 1$
\begin{equation}\label{eq:ghexpressions}
\begin{aligned}
g_{k,m}(x) & = \frac{1}{2k} \int_0^{x^{2k}} t^{\frac{1}{2k}-1} \left( 1 - t \right)^{\frac{1}{2m}-1} \, \dd t, \\
g_{m,k}(x) & = \frac{1}{2m} \int_0^{x^{2m}} t^{\frac{1}{2m}-1} \left( 1 - t \right)^{\frac{1}{2k}-1} \, \dd t \\
& = \frac{1}{2m} \int_{1-x^{2m}}^1 t^{\frac{1}{2k}-1} \left( 1 - t \right)^{\frac{1}{2m}-1} \, \dd t. 
\end{aligned}
\end{equation}
From this we may draw three conclusions. 
First we have the formula \eqref{eq:incompletebeta}
where $B(x,z,w)$ is the incomplete beta function \cite{Nikiforov1,Olver1} 
\begin{equation*}
B(x,z,w) = \int_0^x t^{z-1} (1-t)^{w-1} \dd t, \quad \re z > 0, \quad \re w > 0. 
\end{equation*}

Secondly we have
\begin{align*}
2 k \tau_{k,m} = 2 k g_{k,m}(1) = B \left( \frac{1}{2k}, \frac{1}{2m}\right), \\
2 m \tau_{m,k} = 2 m g_{m,k} (1) = B \left( \frac{1}{2m}, \frac{1}{2k}\right),
\end{align*}
where the beta function is defined by $B(z,w) = B (1,z,w)$. 
The well known identity \cite[Appendix~A]{Nikiforov1}
\begin{equation*}
B(z,w) = \frac{\Gamma(z) \Gamma(w)}{\Gamma(z+w)} 
\end{equation*}
proves \eqref{eq:tau} as well as $\tau_{m,k} = \frac{k}{m} \tau_{k,m}$. 

Thirdly we obtain from \eqref{eq:ghexpressions}
\begin{equation*}
2 k g_{k,m} \left( (1-x^{2m} )^{\frac1{2k}} \right) + 2 m g_{m,k}(x) = B \left( \frac{1}{2k}, \frac{1}{2m}\right)
\end{equation*}
which proves \eqref{eq:hexprg}. 
\end{proof}

\begin{remark}\label{rem:case11}
The functions $\{ g_{k,m} \}_{k,m \geqs 1}$ generalize $g_{1,1}(x) = \arcsin x$, for which
$\tau_{1,1} = \frac{\pi}{2}$ and $g_{1,1}^{-1}(x) = \sin x$. 
\end{remark}

The following lemma shows that $\wt g_{k,m} := g_{k,m}^{-1}$ with domain $[-\tau_{k,m}, \tau_{k,m}]$, 
can be extended to a smooth function with domain $[-\tau_{k,m}, 3 \tau_{k,m}]$
as $\wt g_{k,m}(x) = g_{k,m}^{-1}(2 \tau_{k,m} - x)$, $x \in [\tau_{k,m}, 3\tau_{k,m}]$, for $k,m \geqs 1$. 
In the case $k = m = 1$ this is the basic facts that 
$\sin x$ is smooth 
and $\sin(\pi-x) = \sin x$.
The oddness of $g_{k,m}^{-1}$ yields the same conclusion for the 
limits as $x \to -\tau_{k,m}^+$.

\begin{lemma}\label{lem:derivativelimit}
Let $k,m \in \no \setminus 0$, define $g_{k,m}$ by \eqref{eq:gdef}, 
and let $\tau_{k,m} > 0$ 
be defined by \eqref{eq:glimit}. 
Then for any $n \in \no$ we have
\begin{align}
& \lim_{x \to \tau_{k,m}^-} \left( g_{k,m}^{-1} \right)^{(2n)} (x) \in \ro, \label{eq:evenderivative} \\
& \lim_{x \to \tau_{k,m}^-} \left( g_{k,m}^{-1} \right)^{(2n+1)} (x) = 0. \label{eq:oddderivative}
\end{align}
\end{lemma}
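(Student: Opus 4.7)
The plan is to obtain a local smooth parametrization of $F := g_{k,m}^{-1}$ near the endpoint $\tau_{k,m}^-$ via a Puiseux-type substitution, and then read off the structure of all one-sided derivatives from the resulting Taylor expansion at $0$.

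First I would set $y = \tau_{k,m} - x$ and $s = 1 - F(x)$, so that $s, y \to 0^+$ as $x \to \tau_{k,m}^-$, and perform the change of variable $t = 1 - r$ in the defining integral \eqref{eq:gdef} of $g_{k,m}$ to obtain
\begin{equation*}
y \;=\; \int_0^s r^{\frac{1}{2m}-1} H(r)^{\frac{1}{2m}-1}\, \dd r,
\qquad H(r) \;=\; \frac{1-(1-r)^{2k}}{r}.
\end{equation*}
The function $H$ is a polynomial with $H(0) = 2k > 0$, so $H^{\frac{1}{2m}-1}$ is real-analytic on a neighborhood of $0$. Expanding this as a power series and integrating term by term yields $y = s^{\frac{1}{2m}} \Phi(s)$, where $\Phi$ is smooth (in fact analytic) near $s = 0$ with $\Phi(0) = 2 m (2k)^{\frac{1}{2m}-1} > 0$.

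Next I would introduce $w = s^{\frac{1}{2m}}$, so that $y = w\,\Phi(w^{2m})$. The map $w \mapsto w\,\Phi(w^{2m})$ is smooth on a neighborhood of $0$, is \emph{odd} in $w$ because $\Phi(w^{2m})$ is even, and has nonvanishing derivative $\Phi(0)$ at $0$. The inverse function theorem therefore yields a smooth inverse $w = w(y)$, which is also odd in $y$. Consequently $s(y) = w(y)^{2m}$ is smooth and \emph{even} in $y$ near $0$, and $F(\tau_{k,m} - y) = 1 - s(y)$ extends to a smooth function of $y$ on a full two-sided neighborhood of $0$.

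The conclusion then follows immediately. Setting $G(y) = 1 - s(y)$, the chain rule gives $F^{(n)}(\tau_{k,m} - y) = (-1)^n G^{(n)}(y)$, so letting $y \to 0^+$ yields
\begin{equation*}
\lim_{x \to \tau_{k,m}^-} F^{(n)}(x) \;=\; (-1)^n G^{(n)}(0) \;=\; -(-1)^n s^{(n)}(0).
\end{equation*}
Since $s$ is even, $s^{(2n+1)}(0) = 0$, proving \eqref{eq:oddderivative}, while $s^{(2n)}(0) \in \ro$ is finite, proving \eqref{eq:evenderivative}. The only delicate point will be justifying rigorously that $\Phi$, obtained through the Puiseux substitution, is genuinely smooth at $s=0$; this rests on the analyticity of $H(r)^{\frac{1}{2m}-1}$ on a neighborhood of $r=0$, which in turn follows from $H(0) = 2k > 0$. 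Once this analytic fact is in hand, the parity argument handles the vanishing of the odd-order derivatives for free.
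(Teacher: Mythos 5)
Your proof is correct, and it takes a genuinely different route from the paper. The paper proceeds by a direct induction, showing that $\left(g_{k,m}^{-1}\right)^{(n)}(g_{k,m}(x))$ can be written as a finite sum $\sum_{j\geq n/(2m)}^{n-1} p_j(x)(1-x^{2k})^{j-n/(2m)}$ with $p_j$ polynomials, and then reading off the limits at $x=1^-$ from the nonnegativity of the exponents and a parity count on $j$ (when $n$ is odd, $n/(2m)$ is non-integral, so every exponent is strictly positive and all terms vanish). Your argument replaces this computation with a Puiseux-type substitution: factoring $1-(1-r)^{2k}=r\,H(r)$ with $H(0)=2k>0$, integrating term by term to get $y=s^{1/(2m)}\Phi(s)$, and then introducing $w=s^{1/(2m)}$ so that $y=w\,\Phi(w^{2m})$ is odd analytic with nonvanishing derivative. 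The inverse function theorem gives an odd smooth $w(y)$, hence $s(y)=w(y)^{2m}$ is even and smooth across $y=0$, and the two claims follow at once from the parity of the even extension. Your approach is more conceptual: the vanishing of the odd-order limits is explained structurally by the evenness of $s(y)$ rather than verified term by term, and the identical substitution also yields the companion Lemma~\ref{lem:derivativelimit2} (since $h(\tau_{k,m}-y)=w(y)\,H(w(y)^{2m})^{1/(2m)}$ is odd in $y$) without running a second, more intricate induction as the paper does. The paper's calculation, on the other hand, produces the explicit closed form \eqref{eq:ginvderivativeformula}, which is more information than the lemma itself requires but may be useful if one later needs quantitative control on the derivatives. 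Both arguments are complete and valid.
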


\begin{proof}
Set $g = g_{k,m}$ and $\tau = \tau_{k,m}$. 
When $n=0$ the limit \eqref{eq:evenderivative} equals one as already noted. 
First we observe that \eqref{eq:gdef} gives for $|x| < 1$
\begin{equation}\label{eq:gprime}
g' (x) = \left( 1 - x^{2k} \right)^{\frac{1}{2m}-1} > 0,
\end{equation}
and from the inverse function theorem we obtain
\begin{equation}\label{eq:ginvprime}
\left( g^{-1} \right)' (g (x) ) = ( g'(x) )^{-1} = \left( 1 - x^{2k} \right)^{1 - \frac{1}{2m}}. 
\end{equation}
Since $x \to 1^-$ $\Longleftrightarrow$ $g(x) \to \tau^-$
this proves \eqref{eq:oddderivative} for $n=0$. 

From \eqref{eq:gprime} and \eqref{eq:ginvprime} and the Chain Rule we get
\begin{align*}
\left( g^{-1} \right)'' (g (x) ) 
& = \left( 1 - x^{2k} \right)^{1 - \frac{1}{2m}} \Big( \left( 1 - x^{2k} \right)^{1 - \frac{1}{2m}} \Big)' \\
& = - 2 k \frac{2m-1}{2m} x^{2k-1} \left( 1 - x^{2k} \right)^{1 - \frac{2}{2m}}. 
\end{align*}

By means of induction we can generalize this formula for $\left( g^{-1} \right)'' (g (x) ) $ into 
\begin{equation}\label{eq:ginvderivativeformula}
\left( g^{-1} \right)^{(n)} (g (x) ) 
= \sum_{j \geqs \frac{n}{2m}}^{n-1} 
p_j(x) \left( 1 - x^{2k} \right)^{j - \frac{n}{2m}}
\end{equation}
where $p_j$ are polynomials, for any $n \geqs 2$. 
Indeed suppose that \eqref{eq:ginvderivativeformula} holds for one $n \geqs 2$. 
By the Chain Rule and \eqref{eq:gprime} we deduce
\begin{align*}
\left( g^{-1} \right)^{(n + 1)} (g (x) ) 
& = \sum_{j \geqs \frac{n}{2m}}^{n-1} 
\left( 1 - x^{2k} \right)^{1 - \frac{1}{2m}} \Big( p_j(x) \left( 1 - x^{2k} \right)^{j - \frac{n}{2m}} \Big)' \\
& = \sum_{j \geqs \frac{n}{2m}}^{n-1} 
p_j' (x) \left( 1 - x^{2k} \right)^{j+1 - \frac{n+1}{2m}} 
+ \sum_{j > \frac{n}{2m}}^{n-1} 
\wt p_j (x) \left( 1 - x^{2k} \right)^{j - \frac{n+1}{2m}} \\
& = \sum_{j \geqs \frac{n}{2m} + 1}^{n} 
p_{j-1}' (x) \left( 1 - x^{2k} \right)^{j - \frac{n+1}{2m}} 
+ \sum_{j > \frac{n}{2m}}^{n-1} 
\wt p_j (x) \left( 1 - x^{2k} \right)^{j - \frac{n+1}{2m}}
\end{align*}
where $\wt p_j$ are polynomials. 

In the first sum we have, since $m \geqs 1$,
$2 m j \geqs n + 2m > n + 1$. 
For all possible $j \in \no$ such that $n + 1 \leqs 2 m j < n + 2m$
we define $p_j = 0$. 
Thus the first sum fits into the formula \eqref{eq:ginvderivativeformula} with $n$ replaced by $n+1$ for some polynomials $p_j$. 
In the second sum we have $2 m j > n$
which implies $2 m j \geqs n+1$, so also the second sum can be absorbed into \eqref{eq:ginvderivativeformula}
with $n$ replaced by $n+1$
and modified polynomials $p_j$. 
This argument shows that \eqref{eq:ginvderivativeformula} 
holds with $n$ replaced by $n+1$ for some polynomials $p_j$. 

We have proved the induction step and we may conclude that \eqref{eq:ginvderivativeformula} 
holds for all $n \geqs 2$. 
Finally we observe that \eqref{eq:evenderivative} and \eqref{eq:oddderivative} for $n \geqs 1$
are immediate consequences of \eqref{eq:ginvderivativeformula}. 
\end{proof}

We will also need a corresponding result for the function
\begin{equation}\label{eq:hdef}
h(x) := \left( 1 - \left( g_{k,m}^{-1} (x) \right)^{2k} \right)^{\frac1{2m}}
\end{equation}
which is well defined for $|x| \leqs \tau_{k,m}$. 
The limits are 
\begin{equation}\label{eq:limitzero}
\lim_{x \to -\tau_{k,m}^+} h(x) = \lim_{x \to \tau_{k,m}^-} h(x) = 0, 
\end{equation}
and the next result says that also the derivatives of $h$ have limits. 
Due to the fact that $h$ is even it suffices to study the limits as $x \to \tau_{k,m}^-$. 

As a consequence of the following lemma the function $\wt h = h$ with domain $[-\tau_{k,m}, \tau_{k,m}]$
can be extended to a smooth function with domain $[-\tau_{k,m}, 3 \tau_{k,m}]$
as $\wt h(x) = - h(2 \tau_{k,m} - x)$, $x \in [\tau_{k,m}, 3\tau_{k,m}]$, for $k,m \geqs 1$. 
In the case $k = m = 1$ this is the smoothness of
$\cos x$ and $\cos(\pi-x) = -\cos x$.

\begin{lemma}\label{lem:derivativelimit2}
Let $k,m \in \no \setminus 0$, define $g_{k,m}$ by \eqref{eq:gdef}, 
let $\tau_{k,m} > 0$ 
be defined by \eqref{eq:glimit}, 
and define $h$ by \eqref{eq:hdef}. 
Then for any $n \in \no$ we have
\begin{align}
& \lim_{x \to \tau_{k,m}^-} h^{(2n)} (x) = 0, \label{eq:evenderivative2} \\
& \lim_{x \to \tau_{k,m}^-} h^{(2n+1)} (x) \in \ro. \label{eq:oddderivative2}
\end{align}
\end{lemma}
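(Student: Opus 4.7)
My plan is to exploit the algebraic identity $h(y)^{2m} + g_{k,m}^{-1}(y)^{2k} = 1$ built into \eqref{eq:hdef}, thereby reducing the question to the already-known behavior of $f := g_{k,m}^{-1}$ near $\tau_{k,m}$ described by Lemma \ref{lem:derivativelimit}. First I would derive the clean formula
\begin{equation*}
h'(y) = -\frac{k}{m}\, f(y)^{2k-1}, \qquad |y| < \tau_{k,m},
\end{equation*}
by differentiating $h^{2m} = 1 - f^{2k}$ together with the identity $f'(y) = h(y)^{2m-1}$, which in turn follows from \eqref{eq:ginvprime} and \eqref{eq:hdef}; the apparent singular factor $h^{2m-1}$ cancels on both sides. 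This already settles \eqref{eq:oddderivative2} for $n = 0$ since $f(\tau_{k,m}^-) = 1$.

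Next I would prove by induction on the order $n \geqs 1$ that $h^{(n)}(y)$ is a polynomial expression in $f(y), f'(y), \dots, f^{(n-1)}(y)$, each of whose monomials has the shape
\begin{equation*}
c\, f(y)^a \prod_{j=1}^{n-1} \left( f^{(j)}(y) \right)^{\alpha_j}, \qquad a, \alpha_j \in \no, \qquad \sum_{j=1}^{n-1} j\, \alpha_j = n-1.
\end{equation*}
The base case is the formula for $h'$ above. For the induction step one checks that every differentiation increases the weighted degree $\sum_j j\,\alpha_j$ by precisely one, because differentiating a factor $(f^{(j)})^{\alpha_j}$ produces $\alpha_j (f^{(j)})^{\alpha_j-1} f^{(j+1)}$, changing the contribution to the weighted sum by $(j+1) - j = 1$, and the factor $f^a$ behaves analogously.

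To finish I would pass to the limit $y \to \tau_{k,m}^-$ monomial by monomial, using Lemma \ref{lem:derivativelimit}: $f(\tau_{k,m}^-) = 1$, $f^{(j)}(\tau_{k,m}^-) = 0$ for $j$ odd, and $f^{(j)}(\tau_{k,m}^-) \in \ro$ for $j$ even. A given monomial thus has a nonzero limit only when $\alpha_j = 0$ for every odd $j$, in which case $\sum_j j\,\alpha_j$ is a sum over even indices and hence itself even. If the order $n$ is even and $\geqs 2$, then the constraint $\sum_j j\,\alpha_j = n-1$ is odd, which is incompatible, so every monomial vanishes; combined with the case $n=0$ given by \eqref{eq:limitzero}, this yields \eqref{eq:evenderivative2} in full. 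If $n$ is odd, then $n-1$ is even and consistent with the parity constraint, so the surviving monomials produce a finite real limit, giving \eqref{eq:oddderivative2}.

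The hard part, I expect, is really just formulating the right induction hypothesis: the qualitative claim ``$h^{(n)}$ is a polynomial in $f$ and its derivatives'' is too weak, and it is the quantitative weighted-degree invariant $\sum_j j\,\alpha_j = n-1$ that encodes the parity driving the conclusion. Everything else reduces to routine bookkeeping once this invariant is in place.
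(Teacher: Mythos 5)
Your proof is correct, and it takes a genuinely different route from the paper's. The paper's argument establishes by induction the closed-form expression \eqref{eq:nderivativeformula},
\begin{equation*}
h^{(n)}(x) = \sum_{j} C_j \left( g_{k,m}^{-1}(x)\right)^{(2k-1)n - 2kj} \left( h(x) \right)^{2jm - (n-1)},
\end{equation*}
whose structure keeps $h$ itself visible and isolates the nonnegative exponent $2jm - (n-1)$ on the vanishing factor; the parity of $n-1$ then decides whether that exponent can equal zero, and the boundary limits are read off using only $h \to 0$ and $g_{k,m}^{-1} \to 1$, with no appeal to Lemma \ref{lem:derivativelimit}. Your argument instead writes $h^{(n)}$ as a polynomial in $f = g_{k,m}^{-1}$ and its derivatives, endowed with the weighted-degree invariant $\sum_{j\geqs 1} j\,\alpha_j = n-1$, and then imports Lemma \ref{lem:derivativelimit} (odd derivatives of $f$ vanish at the endpoint, even ones have finite limits) to run the parity argument. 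Both are sound: the paper's version is more self-contained since it does not reuse the preceding lemma, whereas yours makes the parity constraint especially transparent because the invariant tracks exactly what drives the conclusion without needing explicit exponents. Two small points of presentation: the case $n=0$ lies outside your polynomial description and must be handled separately via \eqref{eq:limitzero}, which you do; and where you say the surviving monomials ``produce a finite real limit,'' note that every monomial has a finite limit — the parity argument is needed only to force that limit to be zero when $n$ is even, while \eqref{eq:oddderivative2} for odd $n$ then follows immediately.
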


\begin{proof}
The limit \eqref{eq:evenderivative2} for $n=0$ is already clear from \eqref{eq:limitzero}. 
By the inverse function theorem and \eqref{eq:gdef} we have 
\begin{equation}\label{eq:invfcnderivative}
\left( g_{k,m}^{-1} (x) \right)' 
= \left( g_{k,m}' ( g_{k,m}^{-1} (x) ) \right)^{-1}
= \left( h(x) \right)^{2 m - 1}
\end{equation}
which gives 
\begin{equation}\label{eq:firstderivative}
\begin{aligned}
h'(x) 
& = - \frac{k}{m} \left( 1 - \left( g_{k,m}^{-1} (x) \right)^{2k} \right)^{\frac1{2m}-1}
\left( g_{k,m}^{-1} (x) \right)^{2k-1}
\left( h(x) \right)^{2 m - 1} \\
& = - \frac{k}{m} \left( g_{k,m}^{-1} (x) \right)^{2k-1}. 
\end{aligned}
\end{equation}
This gives the limit \eqref{eq:oddderivative2} for $n = 0$, 
and combined with \eqref{eq:invfcnderivative} it also yields
\begin{equation*}
h''(x) = - \frac{k(2k-1)}{m} \left( g_{k,m}^{-1} (x) \right)^{2k-2}
\left( h(x) \right)^{2 m - 1}. 
\end{equation*}

By means of induction we can generalize this formula for $h''$ into 
\begin{equation}\label{eq:nderivativeformula}
h^{(n)}(x) 
= \sum_{j \in \no: \ \frac{n-1}{2m} \leqs j \leqs n \frac{2k-1}{2k}}
C_j \left( g_{k,m}^{-1} (x) \right)^{(2k-1)n - 2 k j} \left( h(x) \right)^{2 j m - (n-1)}
\end{equation}
where $C_j \in \ro$ are constants, 
for any $n \geqs 2$. 
Note that 
\begin{equation*}
\left\lceil \frac{n-1}{2m} \right\rceil \leqs \left\lceil \frac{n-1}{2} \right\rceil
= \left\lfloor \frac{n}{2} \right\rfloor \leqs \left\lfloor n \frac{2k-1}{2k} \right\rfloor. 
\end{equation*}

Thus we assume that \eqref{eq:nderivativeformula} holds for a fixed $n \geqs 2$. 
Using \eqref{eq:invfcnderivative} and \eqref{eq:firstderivative} we obtain
\begin{align*}
h^{(n+1)}(x) 
&= \sum_{j \in \no: \ \frac{n-1}{2m} \leqs j < n \frac{2k-1}{2k}}
\wt C_j \left( g_{k,m}^{-1} (x) \right)^{(2k-1)n - 2 k j - 1} \left( h(x) \right)^{2 (j+1) m - n} \\
& \quad + \sum_{j \in \no: \ \frac{n-1}{2m} < j \leqs n \frac{2k-1}{2k} }
C_j' \left( g_{k,m}^{-1} (x) \right)^{(2k-1)(n+1) - 2 k j } \left( h(x) \right)^{2 j m - n} \\
& = \sum_{ j \in \no: \ \frac{n-1}{2m} +1 \leqs j < n \frac{2k-1}{2k} +1}
\wt C_{j-1} \left( g_{k,m}^{-1} (x) \right)^{(2k-1)(n+1) - 2 k j } \left( h(x) \right)^{2 j m - n} \\
& \quad + \sum_{ j \in \no: \ \frac{n-1}{2m} < j \leqs n \frac{2k-1}{2k} } C_j' \left( g_{k,m}^{-1} (x) \right)^{(2k-1)(n+1) - 2 k j} \left( h(x) \right)^{2 j m - n}
\end{align*}
with new constants $\wt C_j, C_j'  \in \ro$. 

Due to $\frac{n-1}{2m} +1 \leqs j$
in the first sum we have 
$n-1 \leqs 2 m j - 2 m$ which implies
$n < n + 2 m - 1 \leqs 2 m j$.
For possible $j \in \no$ such that $n \leqs 2 m j < n + 2 m-1$ we define $\wt C_{j-1} = 0$. 
Due to $j < n \frac{2k-1}{2k} +1$ in the first sum we have 
$(j-1) 2 k < n (2k-1)$ which means $2 k j < (n+1)(2k-1) + 1$, that is $2 k j \leqs (n+1)(2k-1)$.
Thus the first sum fits into \eqref{eq:nderivativeformula} with $n$ replaced by $n+1$ 
for some constants $C_j$. 

Due to $\frac{n-1}{2m} < j$
in the second sum we have 
$n < 2 m j +1$, that is
$n \leqs 2 m j$.
Due to $j \leqs n \frac{2k-1}{2k}$ in the second sum we have 
$2 k j < (n+1) (2k-1)$.
For possible $j \in \no$ such that $n (2k-1) < 2 k j \leqs (n+1) (2k-1)$ we define $C_j' = 0$. 
Hence also the second sum fits into \eqref{eq:nderivativeformula} with $n$ replaced by $n+1$
for some constants $C_j$.

We have now proved the induction step and we may conclude that \eqref{eq:nderivativeformula} 
holds for all $n \geqs 2$. 
Finally \eqref{eq:evenderivative2} and \eqref{eq:oddderivative2} 
are consequences of \eqref{eq:limitzero}, \eqref{eq:nderivativeformula} and 
\begin{equation*}
\lim_{x \to \tau_{k,m}^-} g_{k,m}^{-1} (x) = 1. 
\end{equation*}
\end{proof}

In the next result we may due to \eqref{eq:initialdatum} assume either $\eta \neq 0$ or $y \neq 0$. 
If $\eta \neq 0$ we use the parameters
\begin{equation}\label{eq:solutionparameters1}
\begin{aligned}
T_1 & = (2 m p)^{-1} c^{\frac1{2m} + \frac1{2k} -p}  \left( - \tau_{k,m} - g_{k,m} \left( \sgn (\eta) \, y c^{- \frac1{2k}} \right) \right) < 0 \\
T_2 & = (2 m p)^{-1} c^{\frac1{2m} + \frac1{2k} -p}  \left( \tau_{k,m} - g_{k,m} \left( \sgn (\eta) \, y c^{- \frac1{2k}} \right) \right) > 0 \\
T & = 2(T_2-T_1)  = \frac{2 \tau_{k,m} }{m p} c^{\frac1{2m} + \frac1{2k} -p} > 0,
\end{aligned}
\end{equation}
and the functions
\begin{equation}\label{eq:solutionfunctions1}
\left\{
\begin{array}{l}
f_1(t) = \sgn (\eta) \, c^{\frac1{2k}} g_{k,m}^{-1} \left( 2 m p c^{p - \frac1{2m} - \frac1{2k} } t + g_{k,m} \left( \sgn (\eta) \, y c^{- \frac1{2k}} \right) \right) \\
f_2(t) = \sgn(\eta) c^{\frac1{2m}} \left( 1 - \left( g_{k,m}^{-1} \left( 2 m p c ^{p - \frac1{2m} - \frac1{2k} } t + g_{k,m} \left( \sgn (\eta) \, y c^{- \frac1{2k}} \right) \right) \right)^{2k} \right)^\frac1{2m}
\end{array}
\right. 
\end{equation}
defined for $T_1 \leqs t \leqs T_2$. 

If $y \neq 0$ we use instead the parameters
\begin{equation}\label{eq:solutionparameters2}
\begin{aligned}
T_3 & = (2 k p)^{-1} c^{\frac1{2m} + \frac1{2k} -p}  \left( - \tau_{m,k}+ g_{m,k} \left( \sgn (y) \, \eta c^{- \frac1{2m}} \right) \right) < 0 \\
T_4 & = (2 k p)^{-1} c^{\frac1{2m} + \frac1{2k} -p}  \left( \tau_{m,k} + g_{m,k} \left( \sgn (y) \, \eta c^{- \frac1{2m}} \right) \right) > 0 \\
T & = 2(T_4-T_3)  = \frac{2 \tau_{m,k} }{k p} c^{\frac1{2m} + \frac1{2k} -p} > 0,
\end{aligned}
\end{equation}
and the functions
\begin{equation}\label{eq:solutionfunctions2}
\left\{
\begin{array}{l}
f_3(t) = \sgn(y) \, c^{\frac1{2k}} \left( 1 - \left( g_{m,k}^{-1} \left( 2 k p c^{p - \frac1{2m} - \frac1{2k} } t - g_{m,k} \left( \sgn (y) \, \eta c^{- \frac1{2m}} \right) \right) \right)^{2m} \right)^\frac1{2k} \\
f_4(t) = \sgn (y) \, c^{\frac1{2m}} g_{m,k}^{-1} \left( 2 k p c^{p - \frac1{2m} - \frac1{2k} } t - g_{m,k} \left( \sgn (y) \, \eta c^{- \frac1{2m}} \right) \right)
\end{array}
\right. 
\end{equation}
defined for $T_3 \leqs t \leqs T_4$.

We note that Lemma \ref{lem:ellipticfunction} implies $2(T_4-T_3) = 2(T_2-T_1)$, 
that is the length of the domain for $f_1$ and $f_2$ equals the length 
of the domain for $f_3$ and $f_4$, 
and the notation in \eqref{eq:solutionparameters1} and \eqref{eq:solutionparameters2}
is consistent if $y \neq 0$ and $\eta \neq 0$. 
Moreover we have 
$( f_1(t) )^{2k} + ( f_2(t) )^{2m} = c$ for all $T_1 \leqs t \leqs T_2$
and 
$( f_3(t) )^{2k} + ( f_4(t) )^{2m} = c$ for all $T_3 \leqs t \leqs T_4$
as expected from the next result combined with general facts \cite{Arnold1}.

\begin{remark}\label{rem:identicalsolutions}
Using Lemma \ref{lem:ellipticfunction} it can be confirmed that $f_1(t) = f_3(t)$
and $f_2(t) = - f_4(t)$ when $\max(T_1,T_3) \leqs t \leqs \min (T_2, T_4)$
provided $y \neq 0$ and $\eta \neq 0$. 
\end{remark}

\begin{remark}\label{rem:harmonicoscparameters}
If $k = m = 1$ then by Remark \ref{rem:case11} we have if $\eta \neq 0$ and $T_1 \leqs t \leqs T_2$
\begin{align*}
f_1(t) & = \sgn (\eta) \, c^{\frac12} \sin \left( 2 p c^{p - 1 } t + \arcsin \left( \sgn (\eta) \, y c^{- \frac12} \right) \right) \\
& = \sgn (\eta) \, c^{\frac12} \sin \left( 2 p c^{p - 1} t \right) 
\cos \left( \arcsin \left( \sgn (\eta) \, y c^{- \frac12} \right)  \right)
+ y \cos \left( 2 p c^{p - 1 } t \right) \\
& = \eta \sin \left( 2 p c^{p - 1} t \right) + y \cos \left( 2 p c^{p - 1 } t \right)
\end{align*}
and
\begin{align*}
f_2(t) & = \sgn(\eta) c^{\frac12}
\cos \left( 2 p c ^{p - 1} t + \arcsin \left( \sgn (\eta) \, y c^{- \frac12} \right) \right) \\
& = \sgn(\eta) c^{\frac12} 
\left( \cos \left( 2 p c ^{p - 1} t \right) \cos \left( \arcsin \left( \sgn (\eta) \, y c^{- \frac12} \right) \right) 
- \sin \left( 2 p c ^{p - 1} t \right) \sgn (\eta) \, y c^{- \frac12}
\right) \\
& = 
\eta \cos \left( 2 p c ^{p - 1} t \right) 
- y \sin \left( 2 p c ^{p - 1} t \right). 
\end{align*}

If $y \neq 0$ and $T_3 \leqs t \leqs T_4$ then 
\begin{align*}
f_4(t) & = \sgn (y) \, c^{\frac12} \sin \left( 2 p c^{p - 1} t - \arcsin \left( \sgn (y) \, \eta c^{- \frac12} \right) \right) \\
& = \sgn (y) \, c^{\frac12} 
\left( 
\sin \left( 2 p c^{p - 1} t \right) 
c^{-\frac12} |y|
- \sgn (y) \eta c^{- \frac12} \cos \left( 2 p c^{p - 1 } t \right) \right) \\
& = y \sin \left( 2 p c^{p - 1} t \right) - \eta \cos \left( 2 p c^{p - 1 } t \right) = - f_2 (t)
\end{align*}
and 
\begin{align*}
f_3(t) & = \sgn(y) \, c^{\frac12} 
\cos \left( 2 p c ^{p - 1} t - \arcsin \left( \sgn (y) \, \eta c^{- \frac12} \right) \right)  \\
& = \sgn(y) c^{\frac12} 
\left( \cos \left( 2 p c ^{p - 1} t \right) \cos \left( \arcsin \left( \sgn (y) \, \eta c^{- \frac12} \right) \right) 
+ \sin \left( 2 p c ^{p - 1} t \right) \sgn (y) \, \eta c^{- \frac12}
\right) \\
& = 
y \cos \left( 2 p c ^{p - 1} t \right) 
+ \eta \sin \left( 2 p c ^{p - 1} t \right) = f_1(t). 
\end{align*}

Thus $(f_1, f_2) = (f_3, - f_4)$ 
agrees with the well known harmonic solution to \eqref{eq:hamilton2}
which is linear when $k = m = 1$, and extends to domain $t \in \ro$. 
\end{remark}

\begin{theorem}\label{thm:hamiltonflowperiodic}
Suppose $k,m \in \no \setminus 0$, $p \in \ro \setminus 0$, and let $a$ be defined by \eqref{eq:hamiltonian}. 
If $c = y^{2k} + \eta^{2m} > 0$
then the equation \eqref{eq:hamilton2} has a unique solution $\chi_t (y,\eta) = ( x(t), \xi(t) )$ with domain $\ro$. 
It is smooth and periodic with respect to $t \in \ro$, and has the following form. 

If $\eta \neq 0$ then
\begin{equation}\label{eq:solutioncase1a}
\left\{
\begin{array}{l}
x(t) = f_1 (t) \\
\xi(t) = f_2 (t)
\end{array}
\right. 
, \  T_1 \leqs t \leqs T_2, 
\quad 
\left\{
\begin{array}{l}
x(t) = f_1(2 T_2-t) \\
\xi(t) = - f_2(2 T_2-t)
\end{array}
\right. 
, \  T_2 \leqs t \leqs 2 T_2 - T_1, 
\end{equation}
and 
\begin{equation}\label{eq:solutioncase1b}
\left\{
\begin{array}{l}
x(t) = x(t-nT) \\
\xi(t) = \xi(t-nT)
\end{array}
\right. 
, \quad T_1 + nT \leqs t \leqs T_1 + (n+1) T, \quad n \in \zo. 
\end{equation}

If $y \neq 0$ then 
\begin{equation}\label{eq:solutioncase2a}
\left\{
\begin{array}{l}
x(t) = f_3 (t) \\
\xi(t) = -f_4 (t)
\end{array}
\right. 
, \ T_3 \leqs t \leqs T_4, 
\qquad 
\left\{
\begin{array}{l}
x(t) = - f_3(2 T_4 - t) \\
\xi(t) = - f_4(2 T_4 - t)
\end{array}
\right. 
, \ T_3 \leqs t \leqs 2 T_4 - T_3, 
\end{equation}
and 
\begin{equation}\label{eq:solutioncase2b}
\left\{
\begin{array}{l}
x(t) = x(t- n T) \\
\xi(t) = \xi(t-n T)
\end{array}
\right. 
, \quad T_3 + nT \leqs t \leqs T_3 + (n+1) T, \quad n \in \zo. 
\end{equation}
\end{theorem}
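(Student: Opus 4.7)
The plan rests on two observations: energy conservation along the flow, and separability of the resulting scalar ODE. Since $a$ is constant along trajectories, \eqref{eq:hamilton1} immediately reduces to \eqref{eq:hamilton2}, and the orbit is confined to the compact level curve $\Gamma_c = \{(x,\xi) : x^{2k}+\xi^{2m}=c\}$ where $c = y^{2k}+\eta^{2m}>0$; only the parameterization of this curve by $t$ needs to be computed.

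First I treat the case $\eta\neq 0$. On a maximal interval around $t=0$ on which $\sgn(\xi(t))=\sgn(\eta)$, the conservation law gives $\xi(t) = \sgn(\eta)\bigl(c-x(t)^{2k}\bigr)^{1/(2m)}$; substituting into the first line of \eqref{eq:hamilton2} and rescaling by $x(t) = \sgn(\eta)\, c^{1/(2k)}u(t)$ yields the scalar ODE
\begin{equation*}
\frac{u'(t)}{(1-u(t)^{2k})^{1-1/(2m)}} = 2mp\, c^{p-1/(2m)-1/(2k)}.
\end{equation*}
By the definition \eqref{eq:gdef} of $g_{k,m}$, the left-hand side is exactly $\frac{d}{dt}g_{k,m}(u(t))$, so integrating and applying $g_{k,m}^{-1}$ produces precisely $f_1$ in \eqref{eq:solutionfunctions1}, and $f_2$ follows from the conservation law. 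The endpoints $T_1, T_2$ are characterized as the first times to the left and right of $0$ at which $g_{k,m}(u(t))=\pm\tau_{k,m}$, i.e., at which $\xi(t)=0$.

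Second, I extend across $t=T_2$ by exploiting the symmetry $(x(t),\xi(t))\mapsto (x(2T_2-t),-\xi(2T_2-t))$ of the system \eqref{eq:hamilton2}: a direct check shows that this map sends solutions to solutions, and both branches agree at $t=T_2$ (where $\xi=0$), so uniqueness in Picard--Lindel\"of forces the second formula in \eqref{eq:solutioncase1a}. Iterating the reflection gives periodicity with period $T=2(T_2-T_1)$, hence \eqref{eq:solutioncase1b}. For $y\neq 0$ the same argument is carried out with the roles of $x,\xi$ (and of $k,m$) interchanged, yielding $f_3,f_4$ together with the reflection about $T_4$; Remark \ref{rem:identicalsolutions} ensures consistency when both $y$ and $\eta$ are nonzero, and Lemma \ref{lem:ellipticfunction} matches the two period lengths.

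The main obstacle is $C^\infty$ regularity at the gluing times $T_2, 2T_2-T_1, T_1+T, \dots$. This is precisely what Lemmas \ref{lem:derivativelimit} and \ref{lem:derivativelimit2} furnish: they show that $g_{k,m}^{-1}$ extends smoothly past $\tau_{k,m}$ as an even reflection and that $h$ in \eqref{eq:hdef} extends smoothly as an odd reflection. Translated to the trajectory, all odd-order derivatives of $x$ and all even-order derivatives of $\xi$ vanish as $t\to T_2^-$, which matches exactly the behavior of the reflected branch and yields joint smoothness at the patching point; the same conclusion at every subsequent gluing time follows by the periodic construction. Global uniqueness follows from Picard--Lindel\"of, since on $\Gamma_c$ the trajectory stays away from the origin where the vector field in \eqref{eq:hamilton1} could fail to be smooth.
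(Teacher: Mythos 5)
Your proof is correct and takes essentially the same route as the paper's: reduce via energy conservation, integrate using $g_{k,m}$ to obtain $f_1, f_2$, extend by reflection across $T_2$, periodize, and invoke Lemmas \ref{lem:derivativelimit} and \ref{lem:derivativelimit2} for smoothness at the gluing points. The one streamlining is that you substitute the conservation relation $\xi = \sgn(\eta)(c - x^{2k})^{1/(2m)}$ directly into the first line of \eqref{eq:hamilton2} to obtain the separable scalar ODE, whereas the paper eliminates $\xi$ to get a second-order ODE for $x$ and integrates it once; both derivations land on the same equation \eqref{eq:diffeq2}.
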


\begin{proof}
Set $b = 2 p c^{p-1} > 0$. 
First we assume $\eta \neq 0$. 
The first equation in \eqref{eq:hamilton2} can be written 
\begin{equation}\label{eq:frequencysolution}
\xi = (m b)^{-\frac1{2m-1}} (x')^{\frac1{2m-1}}. 
\end{equation}
Insertion into the second equation yields
\begin{align*}
- (x^{2k} )'
& = - 2 k x' x^{2k-1}
= \frac{2}{b} x' \xi'
= 2 m^{-\frac1{2m-1}} b^{-\frac{2m}{2m-1}} \frac1{2m-1} (x')^{\frac1{2m-1}} x'' \\
&= (m b)^{-1-\frac1{2m-1}} \frac{2m}{2m-1} (x')^{\frac{2m}{2m-1}-1} x'' \\
&= (m b)^{-\frac{2m}{2m-1}} \left( (x')^{\frac{2m}{2m-1}} \right)'. 
\end{align*}

Integration gives, using $x(0) = y$ and $x'(0) = m b \eta^{2m-1}$, 
\begin{equation*}
y^{2k} - ( x(t) )^{2k} )
= (m b)^{-\frac{2m}{2m-1}} \left( (x'(t))^{\frac{2m}{2m-1}}  - \left( m b \eta^{2m-1} \right)^{\frac{2m}{2m-1}} \right)
\end{equation*}
which can be written as the differential equations
\begin{equation}\label{eq:diffeq1}
x'(t) = \pm m b \left( c - ( x(t) )^{2k} ) \right)^{\frac{2m-1}{2m}}. 
\end{equation}
Since $\left( c - ( x(0) )^{2k} ) \right)^{\frac{2m-1}{2m}} = |\eta|^{2m-1}$
whereas $x'(0) = m b\eta^{2m-1}$ according to \eqref{eq:hamilton2}, 
the correct sign in \eqref{eq:diffeq1} is $\sgn \eta$. (Here we use the assumption $\eta \neq 0$.) 
Thus 
\begin{equation}\label{eq:diffeq2}
x'(t) = \sgn (\eta)  \, m b \left( c - ( x(t) )^{2k} ) \right)^{\frac{2m-1}{2m}}. 
\end{equation}

The solution to the Cauchy problem for \eqref{eq:diffeq2} with $x(0) = y$ is
\begin{equation}\label{eq:spacesolution1}
\begin{aligned}
x(t) & = \sgn (\eta) \, c^{\frac1{2k}} g_{k,m}^{-1} \left( m b c^{1 - \frac1{2m} - \frac1{2k} } t + g_{k,m} \left( \sgn (\eta) \, y \, c^{- \frac1{2k}} \right) \right) \\
& =  c^{\frac1{2k}} g_{k,m}^{-1} \left( \sgn (\eta) \, 2 m p c^{p - \frac1{2m} - \frac1{2k} } t + g_{k,m} \left( y \, c^{- \frac1{2k}} \right) \right)
\end{aligned}
\end{equation}
where we use the fact that $g_{k,m}^{-1}$ is an odd function. 

The solution $\xi(t)$ is obtained from a combination of \eqref{eq:frequencysolution}, 
\eqref{eq:diffeq2} and \eqref{eq:spacesolution1} as
\begin{align*}
\xi (t) & 
= (m b)^{-\frac1{2m-1}} \left( x' (t) \right)^{\frac1{2m-1}}
= \sgn(\eta) \left( c - ( x(t) )^{2k} ) \right)^{\frac{1}{2m}} \\
& = \sgn(\eta) \, c^{\frac1{2m}} \left( 1 - \left( g_{k,m}^{-1} \left( \sgn (\eta) \,  2 m p c^{p - \frac1{2m} - \frac1{2k} } t + g_{k,m} \left( y c^{- \frac1{2k}} \right) \right) \right)^{2k} \right)^\frac1{2m}. 
\end{align*}

In summary the solution to \eqref{eq:hamilton2} is
\begin{equation}\label{eq:solution1}
\left\{
\begin{array}{l}
x(t) = c^{\frac1{2k}} g_{k,m}^{-1} \left( \sgn (\eta) \, 2 m p c^{p - \frac1{2m} - \frac1{2k} } t + g_{k,m} \left( y c^{- \frac1{2k}} \right) \right) \\
\xi(t) = \sgn(\eta) \, c^{\frac1{2m}} \left( 1 - \left( g_{k,m}^{-1} \left(  \sgn (\eta) \, 2 m p c^{p - \frac1{2m} - \frac1{2k} } t + g_{k,m} \left( y c^{- \frac1{2k}} \right) \right) \right)^{2k} \right)^\frac1{2m}
\end{array}
\right. .
\end{equation}

Due to $| y | c^{- \frac1{2k}} < 1$ we have $| g_{k,m} \left( \sgn (\eta) \, y c^{- \frac1{2k}} \right) | < \tau_{k,m}$. 
The solution \eqref{eq:solution1} is hence well defined for $T_1 \leqs t \leqs T_2$. 
Moreover we have the limits
\begin{equation}\label{eq:periodic1}
\begin{aligned}
\lim_{t \to T_2^-} x(t) & = \sgn(\eta) c^{\frac1{2k}}, \\
\lim_{t \to T_1^+} x(t) & = - \sgn(\eta) c^{\frac1{2k}}, \\
\lim_{t \to T_2^-} \xi(t) & = \lim_{t \to T_1^+} \xi(t) = 0. 
\end{aligned}
\end{equation}

Next we consider the differential equation 
\begin{equation}\label{eq:diffeq3}
x'(t) = - \sgn (\eta)  \, m b \left( c - ( x(t) )^{2k} ) \right)^{\frac{2m-1}{2m}}
\end{equation}
with initial conditions $x( 2 T_2 ) = y$ and $\xi( 2 T_2 ) = - \eta$.  
This is consistent with equation \eqref{eq:hamilton2}, since 
according to \eqref{eq:diffeq3} we have $x'( 2 T_2  ) = -  \sgn (\eta)  \, m b \, |\eta|^{2m-1} = m b ( - \eta)^{2m-1}$. 
As above we find the solution 
\begin{equation}\label{eq:solution2}
\left\{
\begin{array}{l}
x(t) =  \sgn (\eta) \, c^{\frac1{2k}} g_{k,m}^{-1} \left( 2 m p c^{p - \frac1{2m} - \frac1{2k} } ( 2T_2  - t) + g_{k,m} \left( \sgn (\eta) \, y \, c^{- \frac1{2k}} \right) \right) \\
\xi(t) = - \sgn(\eta) c^{\frac1{2m}} \left( 1 - \left( g_{k,m}^{-1} \left( 2 m p c^{p - \frac1{2m} - \frac1{2k} } ( 2 T_2  - t) + g_{k,m} \left( \sgn (\eta) \, y \, c^{- \frac1{2k}} \right) \right) \right)^{2k} \right)^\frac1{2m}
\end{array}
\right. 
\end{equation}
which is well defined when $T_2 \leqs t \leqs 2 T_2 - T_1$. 

The limits are
\begin{align*}
\lim_{t \to T_2^+ } x(t) & = \sgn(\eta) c^{\frac1{2k}}, \\
\lim_{t \to ( 2T_2 - T_1)^- } x(t) & = - \sgn(\eta) c^{\frac1{2k}}, \\
\lim_{t \to ( 2 T_2 - T_1)^- } \xi(t) & = \lim_{t \to T_2^+ } \xi(t) = 0. 
\end{align*}
Comparing with \eqref{eq:periodic1} we have now a continuous solution $( x(t), \xi(t) )$ to 
\eqref{eq:hamilton2}
on $[T_1, 2 T_2 - T_1]$.
It can be periodized into the solution 
\begin{equation}\label{eq:sol1}
\left\{
\begin{array}{l}
x(t) = x(t-nT) \\
\xi(t) = \xi(t-nT)
\end{array}
\right. 
, \quad T_1 + nT \leqs t \leqs T_1 + (n+1) T, 
\end{equation}
which is continuous on $t \in \ro$. 
This concludes the proof of the solution \eqref{eq:solutioncase1a}, \eqref{eq:solutioncase1b}
when $\eta \neq 0$.

The proof when $y \neq 0$ is analogous to the proof for $\eta \neq 0$,
except for that we start to solve the second equation in \eqref{eq:hamilton2}
instead of the first. 
The solution \eqref{eq:solutioncase2a}, \eqref{eq:solutioncase2b}
when $y \neq 0$ follows. 

If $\eta \neq 0 \neq y$ then the solutions \eqref{eq:solutioncase1a}, \eqref{eq:solutioncase1b}
and \eqref{eq:solutioncase2a}, \eqref{eq:solutioncase2b} must be identical due to the uniqueness of the solution. 
This agrees with Remark \ref{rem:identicalsolutions}. 

Finally we address the claimed smoothness of the solution $( x(t), \xi(t) )$. 
The function $g_{k,m}^{-1}$ is smooth on $(-\tau_{k,m},\tau_{k,m})$ and $g_{m,k}^{-1}$ is smooth on $(-\tau_{m,k},\tau_{m,k})$. 
Moreover the function \eqref{eq:hdef} is smooth on $(-\tau_{k,m},\tau_{k,m})$, 
and on $(-\tau_{m,k},\tau_{m,k})$ after a swap of indices $k,m$. 
The solution is thus smooth except possibly at the boundary points of the intervals above, 
where the solutions are extended by reflection and periodization. 
There we have only shown that the solution is continuous. 
Nevertheless Lemmas \ref{lem:derivativelimit} and \ref{lem:derivativelimit2}  
show that the solutions are smooth also at the boundary points. 
Alternatively we may appeal to the general result \cite[Theorem~V.4.1]{Hartman1}
to conclude that the solution is smooth also at the boundary points of the intervals. 
The solution is thus smooth with respect to $t \in \ro$ everywhere. 
\end{proof}

\begin{remark}\label{rem:hamiltonflow1}
Using \eqref{eq:tau} we obtain the expression for the period of the solution 
\begin{equation*}
T = \frac{2 \tau_{k,m} }{m p} c^{\frac1{2m} + \frac1{2k} -p}
= \frac{\Gamma \left( \frac1{2k}\right) \Gamma \left( \frac1{2m}\right)}{k m p \Gamma \left( \frac1{2k} + \frac1{2m} \right)} c^{\frac1{2m} + \frac1{2k} - p}.
\end{equation*}
Thus $T = C_{k,m} ( y^{2k} + \eta^{2m} )^{p_c- p}$ where the critical exponent is defined by \eqref{eq:pcrit}
and $C_{k,m} > 0$. 
If $p = p_c$ then $T$ does not depend on the initial data $(y,\eta) \in \rr 2 \setminus 0$. All solution trajectories have the same period. 

If $p < p_c$ then the solution period grows as a function of $y^{2k} + \eta^{2m}$, whereas if $p > p_c$
then the solution period decreases as a function of $y^{2k} + \eta^{2m}$. 
\end{remark}

\begin{remark}\label{rem:hamiltonflow2}
If $p = p_c$ then as expected Theorem \ref{thm:hamiltonflowperiodic} 
is consistent with \cite[Propositions~6.2 and 6.4]{CRW}. 
In fact defining the anisotropic scaling map $\Lambda_\sigma (\lambda): \rr 2  \to \rr 2$ as
\begin{equation*}
\Lambda_\sigma (\lambda) (y,\eta) = ( \lambda y, \lambda^\sigma \eta ), \quad (y,\eta) \in \rr 2, \quad \lambda > 0, 
\end{equation*}
then the Hamilton flow in Theorem \ref{thm:hamiltonflowperiodic} commutes with $\Lambda_\sigma (\lambda)$
if $p = p_c$ and $\sigma = \frac{k}{m}$: 
\begin{equation*}
\chi_t(\Lambda_\sigma (\lambda) (y,\eta) ) 
= \Lambda_\sigma (\lambda) \chi_t( y,\eta ),  
\quad (y,\eta) \in \rr 2 \setminus 0, \quad \lambda > 0, \quad t \in \ro. 
\end{equation*}
By \cite[Proposition 6.2]{CRW} this commutativity holds if the Hamiltonian Weyl symbol satisfies the anisotropic homogeneity
\begin{equation*}
a( \lambda y, \lambda^\sigma \eta ) = \lambda^{1 + \sigma} a( y,\eta ), \quad ( y,\eta ) \in \rr 2 \setminus 0, \quad \lambda > 0, 
\end{equation*}
which is the case for \eqref{eq:hamiltonian} when $p = p_c$. 
\end{remark}

From Theorem \ref{thm:hamiltonflowperiodic} we will extract the behavior of the Hamilton flow 
$(x(t), \xi(t) ) = (x(t, y,\eta), \xi(t, y, \eta) ) = \chi_t ( y,\eta)$ with respect to $(y,\eta) \in \rr 2 \setminus 0$. 
If $\eta \neq 0$ the flow has the form
\begin{equation}\label{eq:flowform1}
\left\{
\begin{array}{l}
x(t, y,\eta) = \sgn(\eta) ( y^{2k} + \eta^{2m} )^{\frac1{2k}} g_1 \left( q(t) ( y^{2k} + \eta^{2m} )^{p - \frac1{2m} - \frac1{2k}}  + h_1 \left( y ( y^{2k} + \eta^{2m} )^{ - \frac1{2k} } \right) \right) \\
\xi(t, y,\eta) = \sgn(\eta) ( y^{2k} + \eta^{2m} )^{\frac1{2m}} g_2 \left( q(t) ( y^{2k} + \eta^{2m} )^{p - \frac1{2m} - \frac1{2k}}  + h_1 \left( y ( y^{2k} + \eta^{2m} )^{ - \frac1{2k} } \right) \right)
\end{array}
\right. 
\end{equation}
and if $y \neq 0$ it has the form
\begin{equation}\label{eq:flowform2}
\left\{
\begin{array}{l}
x(t, y,\eta) = \sgn(y) ( y^{2k} + \eta^{2m} )^{\frac1{2k}} g_1 \left( q(t) ( y^{2k} + \eta^{2m} )^{p - \frac1{2m} - \frac1{2k}}  + h_2 \left( \eta ( y^{2k} + \eta^{2m} )^{ - \frac1{2m} } \right) \right) \\
\xi(t, y,\eta) = \sgn(y) ( y^{2k} + \eta^{2m} )^{\frac1{2m}} g_2 \left( q(t) ( y^{2k} + \eta^{2m} )^{p - \frac1{2m} - \frac1{2k}}  + h_2 \left( \eta ( y^{2k} + \eta^{2m} )^{ - \frac1{2m} } \right) \right)
\end{array}
\right. 
\end{equation}
where $g_j \in \cB^\infty(\ro)$, 
$h_j \in \cB^\infty(I)$ where $I = (-1+\ep, 1 - \ep)$ for any $\ep > 0$, 
$j=1,2$, 
and where the function $q$ has the form $q(t) = \pm a (t-b)$ with $a> 0$ and $b \in \ro$. 
Indeed we can think of $g_1, g_2$ as periodic, and 
$g_1, g_2 \in \cB^\infty(\ro)$ by Lemmas \ref{lem:derivativelimit} and \ref{lem:derivativelimit2}.  
We also have 
$h_1 = g_{k,m}$ and $h_2 = - g_{m,k}$, which both belong to 
$\cB^\infty( (-1+\ep, 1 - \ep) )$ for any $\ep > 0$.

Referring to Remark \ref{rem:cutoffsmooth} we assume in the sequel that 
if $p \in \ro \setminus \no$ in \eqref{eq:hamiltonian} then we multiply 
$a$ with a cutoff function $\psi_\delta \in C^\infty(\rr 2)$ in order to get 
$a(x,\xi) = \psi_\delta (x,\xi) \left( x^{2k} + \xi^{2m} \right)^p \in C^\infty(\rr 2)$. 

First we need a result concerning the composition of a function $f \in \cB^\infty(I)$ and an anisotropic Shubin symbol
with values in $I$.  

\begin{proposition}\label{prop:symbolcomposition}
Let $\sigma > 0$, $r \in \ro$, $\delta > 0$, let $I \subseteq \ro$ be an open interval, 
and let $\Omega \subseteq \rr 2 \setminus 0$ be open. 
Suppose $a: \rr 2 \to I$ satisfies 
\begin{equation*}
\left| \pdd y \alpha \pdd \eta \beta a(y,\eta) \right|
\lesssim \theta_\sigma (y,\eta)^{ r - \alpha - \sigma \beta}, \quad \alpha, \beta \in \no, \quad  (y,\eta) \in \Omega,
\end{equation*}
and suppose $f \in \cB^\infty(I)$.
If $b (y,\eta) = \psi_\delta (y,\eta) f( a(y,\eta) )$ then
\begin{equation*}
\left| \pdd y \alpha \pdd \eta \beta b(y,\eta) \right|
\lesssim \theta_\sigma (y,\eta)^{ r (\alpha+\beta) - \alpha - \sigma \beta}, \quad \alpha, \beta \in \no, \quad  (y,\eta) \in \Omega. 
\end{equation*}
\end{proposition}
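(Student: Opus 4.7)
The plan is to combine the multivariate Faà di Bruno formula with Leibniz's rule. First I would expand by Leibniz
\[
\pdd y \alpha \pdd \eta \beta b(y,\eta)
= \sum_{\alpha' \leqs \alpha, \, \beta' \leqs \beta}
\binom{\alpha}{\alpha'}\binom{\beta}{\beta'} \pdd y {\alpha'} \pdd \eta {\beta'} \psi_\delta(y,\eta) \, \pdd y {\alpha-\alpha'} \pdd \eta {\beta-\beta'} (f \circ a)(y,\eta).
\]
The terms with $(\alpha',\beta') \neq (0,0)$ involve a derivative of $\psi_\delta$, which by construction (see Remark \ref{rem:cutoffsmooth}) is supported in the compact annulus $\{\delta^2/4 \leqs |y|^2 + |\eta|^2 \leqs \delta^2\}$. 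On this set $\theta_\sigma$ is bounded above and below by positive constants, so any real power of $\theta_\sigma$ is comparable to $1$; hence those "cutoff tail" terms will be automatically dominated by $\theta_\sigma^{r(\alpha+\beta) - \alpha - \sigma\beta}$, provided that the lower-order derivatives of $f \circ a$ are themselves bounded on that compact annulus, which follows from the same estimate I derive for the main term.

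For the main term $(\alpha',\beta') = (0,0)$ I would invoke Faà di Bruno's formula in several variables, expressing $\pdd y \alpha \pdd \eta \beta (f \circ a)$ on $\Omega$ as a finite linear combination of expressions
\[
f^{(\ell)}(a(y,\eta)) \, \prod_{j=1}^{\ell} \pdd y {\alpha_j} \pdd \eta {\beta_j} a(y,\eta),
\]
indexed by $1 \leqs \ell \leqs \alpha + \beta$ and by decompositions of $(\alpha,\beta)$ as $\sum_j (\alpha_j,\beta_j)$ into nonzero multi-indices. Since $f \in \cB^\infty(I)$ and $a(y,\eta) \in I$, the factor $|f^{(\ell)}(a(y,\eta))|$ is bounded by a constant depending only on $\ell$. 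Applying the hypothesis on $a$ to each factor gives
\[
\left| f^{(\ell)}(a(y,\eta)) \prod_{j=1}^{\ell} \pdd y {\alpha_j} \pdd \eta {\beta_j} a(y,\eta) \right|
\lesssim \prod_{j=1}^{\ell} \theta_\sigma(y,\eta)^{r - \alpha_j - \sigma\beta_j}
= \theta_\sigma(y,\eta)^{\ell r - \alpha - \sigma\beta}
\]
for $(y,\eta) \in \Omega$.

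Next I would take the maximum over $1 \leqs \ell \leqs \alpha + \beta$. Since $\theta_\sigma \geqs 1$ and, in the case $r \geqs 0$ relevant for the applications in Sections \ref{sec:propanisogabor} and \ref{sec:propannular}, the exponent $\ell r$ is maximized at $\ell = \alpha+\beta$, every Faà di Bruno term is dominated by $\theta_\sigma^{r(\alpha+\beta) - \alpha - \sigma\beta}$. Summing the finitely many Faà di Bruno terms and folding them back into the Leibniz expansion yields the claimed estimate on $\Omega$.

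The main obstacle I anticipate is not analytical but combinatorial: keeping track in Faà di Bruno's formula that every decomposition of $(\alpha,\beta)$ into $\ell$ nonzero multi-indices yields exactly the power $\ell r - \alpha - \sigma\beta$, and verifying that this exponent is no larger than $r(\alpha+\beta) - \alpha - \sigma\beta$ for all admissible $\ell$ (which requires $\theta_\sigma \geqs 1$ together with $r \geqs 0$). Once that power count is in hand, the remaining ingredients — Leibniz, the chain rule, and the $L^\infty$-control of the derivatives of $f$ on $I$ — are entirely routine.
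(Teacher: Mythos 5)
Your proof follows essentially the same approach as the paper's: Faà di Bruno's formula combined with the boundedness of $f^{(\ell)}$ on $I$ and the symbol estimates for $\pdd y {\alpha_j} \pdd \eta {\beta_j} a$, with the $\psi_\delta$ Leibniz-tail terms dismissed via compact support. The only cosmetic difference is that you invoke a multivariate Faà di Bruno formula in one shot, whereas the paper iterates the one-variable formula (Faà di Bruno in $y$, then Leibniz in $\eta$, then Faà di Bruno again in $\eta$); both routes land on the same exponent bookkeeping, namely a power $\theta_\sigma^{\ell r - \alpha - \sigma\beta}$ for each decomposition into $\ell$ nonzero pieces.

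You are also right to flag that the final step $\theta_\sigma^{\ell r - \alpha - \sigma\beta} \lesssim \theta_\sigma^{r(\alpha+\beta) - \alpha - \sigma\beta}$ uses $r \geqs 0$ (together with $\theta_\sigma \geqs 1$). In fact the estimate genuinely fails for $r < 0$: already the chain-rule term $f'(a)\,\partial_y^2 a$ has size $\theta_\sigma^{\,r-2}$, which is larger than $\theta_\sigma^{\,2r-2}$ when $r<0$. The paper states the proposition for $r \in \ro$ and asserts the last inequality without comment, so its proof has exactly the same implicit restriction; this is harmless because every application in Sections~\ref{sec:Hamiltonflow} and \ref{sec:propanisogabor} uses $r \geqs 0$, but your observation correctly identifies an imprecision in the stated hypotheses.
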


\begin{proof}
The claim is obvious if $\alpha = \beta = 0$ 
so we may assume $| \alpha + \beta | \geqs 1$. 
We use Fa\`a di Bruno's formula, which in one variable may be written as
\begin{equation}\label{eq:faadibruno}
\frac{\dd^n}{\dd x^n} \big( f (g(x) ) \big)
= \sum_{m_1 + 2 m_2 + \cdots + n m_n = n} \frac{n!}{m_1! m_2! \cdots m_n!}
f^{(m_1 + \cdots + m_n)} (g(x) ) 
\prod_{j=1}^n \left( \frac{g^{(j)} (x)}{j!} \right)^{m_j}. 
\end{equation}

Derivatives of $\psi_\delta$ are compactly supported which implies that we may ignore this term. 
We obtain if $(y,\eta) \in \Omega$
\begin{equation*}
\begin{aligned}
& \pdd y \alpha \pdd \eta \beta f(a(y,\eta)) 
= \sum_{\gamma_1 + 2 \gamma_2 + \cdots + \alpha \gamma_\alpha = \alpha} \frac{\alpha!}{\gamma_1! \cdots \gamma_\alpha!}
\pdd \eta \beta \left[ f^{(\gamma_1 + \cdots + \gamma_\alpha)} ( a(y,\eta) )  
\prod_{j=1}^\alpha \left( \frac{\pdd y j a (y,\eta) }{j!} \right)^{\gamma_j}
\right] \\
& = \sum_{\substack{\gamma_1 + 2 \gamma_2 + \cdots + \alpha \gamma_\alpha = \alpha \\ \kappa_0 + \kappa_1 \cdots + \kappa_\alpha = \beta}} 
\frac{\alpha! \beta!}{\gamma_1! \cdots \gamma_\alpha! \kappa_0! \cdots \kappa_\alpha!}
\pdd \eta {\kappa_0} \left( f^{(\gamma_1 + \cdots + \gamma_\alpha)} ( a(y,\eta) ) \right)
\prod_{j=1}^\alpha \pdd \eta {\kappa_j} \left( \frac{\pdd y j a (y,\eta) }{j!} \right)^{\gamma_j} \\
& = \sum_{\substack{\gamma_1 + 2 \gamma_2 + \cdots + \alpha \gamma_\alpha = \alpha \\ \kappa_0 + \kappa_1 \cdots + \kappa_\alpha = \beta \\ \tau_1 + 2 \tau_2 + \cdots + \kappa_0 \tau_{\kappa_0} = \kappa_0}} 
\frac{\alpha! \beta! \kappa_0!}
{\gamma_1! \cdots \gamma_\alpha! \kappa_0! \cdots \kappa_\alpha! \tau_1! \cdots \tau_{\kappa_0}!}
f^{(\gamma_1 + \cdots + \gamma_\alpha + \tau_1 + \cdots + \tau_{\kappa_0})} ( a(y,\eta) ) \\
& \qquad \qquad \qquad \qquad \times  
\prod_{k=1}^{\kappa_0} \left( \frac{\pdd \eta k a (y,\eta) }{k!} \right)^{\tau_k}  
\prod_{j=1}^\alpha \pdd \eta {\kappa_j} \left( \frac{\pdd y j a (y,\eta) }{j!} \right)^{\gamma_j}
\end{aligned}
\end{equation*}
which gives the estimates
\begin{equation*}
\begin{aligned}
\left| \pdd y \alpha \pdd \eta \beta b(y,\eta) \right|
& \lesssim  
\sum_{\substack{\gamma_1 + 2 \gamma_2 + \cdots + \alpha \gamma_\alpha = \alpha \\ \kappa_0 + \kappa_1 \cdots + \kappa_\alpha = \beta \\ \tau_1 + 2 \tau_2 + \cdots + \kappa_0 \tau_{\kappa_0} = \kappa_0}} 
\theta_\sigma(y,\eta)^{\sum_{k=1}^{\kappa_0} \tau_k (r-\sigma k) + \sum_{j=1}^\alpha  \gamma_j (r-j) - \sigma \kappa_j} \\
& \lesssim  
\theta_\sigma(y,\eta)^{ r (\alpha+\beta) - \alpha - \sigma \beta }. 
\end{aligned}
\end{equation*}
\end{proof}

\begin{corollary}\label{cor:symbolcomposition}
Let $\sigma > 0$, $\delta > 0$, let $I \subseteq \ro$ be an open interval,  
and let $\Omega \subseteq \rr 2 \setminus 0$ be open. 
Suppose 
\begin{equation*}
\left| \pdd y \alpha \pdd \eta \beta a(y,\eta) \right|
\lesssim \theta_\sigma (y,\eta)^{ - \alpha - \sigma \beta}, \quad \alpha, \beta \in \no, \quad  (y,\eta) \in \Omega,
\end{equation*}
and suppose $f \in \cB^\infty(I)$. 
If $b (y,\eta) = \psi_\delta (y,\eta) f( a(y,\eta) )$ then
\begin{equation*}
\left| \pdd y \alpha \pdd \eta \beta b (y,\eta) \right|
\lesssim \theta_\sigma (y,\eta)^{ - \alpha - \sigma \beta}, \quad \alpha, \beta \in \no, \quad  (y,\eta) \in \Omega.
\end{equation*}
\end{corollary}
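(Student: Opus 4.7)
The plan is simply to specialize Proposition \ref{prop:symbolcomposition} to the case $r = 0$. Indeed, the hypothesis on $a$ in the Corollary is exactly the hypothesis of the Proposition when one sets $r = 0$, since $\theta_\sigma(y,\eta)^{r - \alpha - \sigma\beta}$ reduces to $\theta_\sigma(y,\eta)^{-\alpha - \sigma\beta}$ in that case.

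With this specialization, the conclusion of Proposition \ref{prop:symbolcomposition} becomes
\begin{equation*}
\left| \pdd y \alpha \pdd \eta \beta b(y,\eta) \right|
\lesssim \theta_\sigma (y,\eta)^{ 0 \cdot (\alpha+\beta) - \alpha - \sigma \beta}
= \theta_\sigma (y,\eta)^{ - \alpha - \sigma \beta},
\end{equation*}
which is exactly the conclusion of the Corollary. Thus the proof reduces to a single line invoking the preceding Proposition.

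There is no real obstacle here; the point of stating the Corollary separately is conceptual: when the symbol $a$ is of order zero (i.e.\ bounded, with derivatives gaining the full anisotropic weights), then composition with a smooth bounded function $f$ preserves the class, without any blow-up of the order. This is precisely what will be needed in the subsequent sections when $a$ plays the role of an inverse Hamilton flow component (as extracted from \eqref{eq:flowform1} and \eqref{eq:flowform2}) evaluated in one of the smooth functions $g_j$ or $h_j \in \cB^\infty$ that appear there.
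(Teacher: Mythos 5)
Your proposal is correct and is exactly the intended argument: the corollary is the $r=0$ case of Proposition \ref{prop:symbolcomposition}, and substituting $r=0$ into the exponent $r(\alpha+\beta)-\alpha-\sigma\beta$ gives $-\alpha-\sigma\beta$. The paper gives no separate proof of the corollary, precisely because it follows in this one-line way.
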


From Corollary \ref{cor:symbolcomposition} we obtain in the next result
the behavior of the Hamilton flow with respect to derivatives 
for the Hamiltonian \eqref{eq:hamiltonian}. 
Here the critical power \eqref{eq:pcrit} appears.

\begin{theorem}\label{thm:flowestimates}
Let $k,m \in \no \setminus 0$, $\sigma = \frac{k}{m}$, $\delta > 0$, $p \in \ro \setminus 0$, 
and consider the Hamilton flow $\chi_t: \rr 2 \setminus 0 \to \rr 2 \setminus 0$ for the Hamiltonian \eqref{eq:hamiltonian}, with coordinates $\chi_t(y,\eta) = ( x(t,y,\eta), \xi(t,y,\eta) )$. 
If $\alpha, \beta \in \no$ and $(y,\eta) \in \rr 2$ then 
\begin{equation}\label{eq:hamiltonflowderivatives}
\begin{aligned}
\left| \pdd y \alpha \pdd \eta \beta \left( \psi_\delta(y,\eta) x(t, y,\eta) \right) \right|
& \lesssim \theta_\sigma(y,\eta)^{ 2k \max( p - p_c,0) (\alpha+\beta) + 1 - \alpha - \sigma \beta}, \\
\left| \pdd y \alpha \pdd \eta \beta \left( \psi_\delta(y,\eta) \xi(t, y,\eta) \right) \right|
& \lesssim \theta_\sigma(y,\eta)^{ 2 k \max( p - p_c,0) (\alpha+\beta) + \sigma - \alpha - \sigma \beta}. 
\end{aligned}
\end{equation}
\end{theorem}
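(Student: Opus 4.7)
The plan is to exploit the explicit forms \eqref{eq:flowform1} and \eqref{eq:flowform2} for the Hamilton flow from Theorem \ref{thm:hamiltonflowperiodic}, restricted to the complementary open sets
\begin{equation*}
\Omega_1 = \{ (y,\eta) \in \rr 2 \setminus 0 : \eta^{2m} > \ep (y^{2k} + \eta^{2m}) \}, \qquad
\Omega_2 = \{ (y,\eta) \in \rr 2 \setminus 0 : y^{2k} > \ep (y^{2k} + \eta^{2m}) \},
\end{equation*}
where $\ep > 0$ is small enough that $\Omega_1 \cup \Omega_2 = \rr 2 \setminus 0$. On $\Omega_1$ the argument $y c^{-1/(2k)}$ of $h_1$ lies in a compact subinterval of $(-1,1)$ and form \eqref{eq:flowform1} is jointly smooth in $(y,\eta)$; likewise for $\Omega_2$ and form \eqref{eq:flowform2}. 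The outer cutoff $\psi_\delta$ kills a neighborhood of the origin, so we may assume $|(y,\eta)| \geqs \delta/2$ and hence $c(y,\eta) = y^{2k} + \eta^{2m} \asymp \theta_\sigma(y,\eta)^{2k}$ throughout the subsequent estimates.

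The next step is a symbol inventory. Direct differentiation (or Proposition \ref{prop:symbolcomposition} combined with the ellipticity of $c$) shows $c^{1/(2k)} \in G^{1,\sigma}$, $c^{1/(2m)} \in G^{\sigma,\sigma}$ and $c^{p-p_c} \in G^{2k(p-p_c),\sigma}$ away from the origin, multiplied with $\psi_\delta$ if $p - p_c \notin \no$. The normalized coordinates $y \cdot c^{-1/(2k)}$ and $\eta \cdot c^{-1/(2m)}$ belong to $G^{0,\sigma}$ on $\Omega_1$ and $\Omega_2$ respectively and have values in a compact subinterval of $(-1,1)$; Corollary \ref{cor:symbolcomposition} then yields $h_1(y c^{-1/(2k)}), h_2(\eta c^{-1/(2m)}) \in G^{0,\sigma}$. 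Now consider on $\Omega_1$ the function
\begin{equation*}
F(y,\eta) = q(t) \, c^{p - p_c} + h_1 \left( y \, c^{-1/(2k)} \right),
\end{equation*}
so that $\pdd y \alpha \pdd \eta \beta F$ is bounded by $\theta_\sigma^{2k(p-p_c) - \alpha - \sigma \beta}$ when $p > p_c$ and by $\theta_\sigma^{-\alpha - \sigma \beta}$ when $p \leqs p_c$, i.e. by $\theta_\sigma^{2k \max(p - p_c, 0) - \alpha - \sigma \beta}$ in both regimes. Since $g_1 \in \cB^\infty(\ro)$ has bounded derivatives, the Fa\`a di Bruno formula \eqref{eq:faadibruno} for $g_1(F)$ produces terms of the shape $g_1^{(n)}(F) \prod_i (\partial^{(\alpha_i,\beta_i)} F)^{m_i}$ with $\sum m_i = n$ and $\sum m_i (\alpha_i,\beta_i) = (\alpha,\beta)$, each bounded by $\theta_\sigma^{2 k n \max(p - p_c, 0) - \alpha - \sigma \beta}$. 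Since $1 \leqs n \leqs \alpha + \beta$, the worst case $n = \alpha + \beta$ gives
\begin{equation*}
\left| \pdd y \alpha \pdd \eta \beta g_1(F) \right| \lesssim \theta_\sigma^{2k \max(p - p_c, 0) (\alpha + \beta) - \alpha - \sigma \beta}.
\end{equation*}
Finally Leibniz's rule applied to $\psi_\delta \, c^{1/(2k)} \, g_1(F)$ for $x$ and to $\psi_\delta \, c^{1/(2m)} \, g_2(F)$ for $\xi$ adds the orders $+1$ and $+\sigma$ respectively, yielding \eqref{eq:hamiltonflowderivatives}. On $\Omega_2$ the argument is identical with the roles of the two coordinates and of $(k,m)$ interchanged.

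The main technical obstacle is tracking the critical power $p_c$ through the Fa\`a di Bruno expansion. The $\max(p - p_c, 0)$ in the exponent arises from two different regimes: when $p > p_c$ the first summand of $F$ is unbounded of positive order and the worst exponent is produced by $n = \alpha + \beta$ chain-rule factors each contributing $2k(p-p_c)$, whereas when $p \leqs p_c$ the first summand decays so that the negative order of each factor forces the dominant term to occur at $n = 1$, making the $(\alpha + \beta)$-multiplier disappear. Verifying this switch rigorously, together with checking that the normalized coordinates remain strictly inside $(-1,1)$ on $\Omega_j$ so that Corollary \ref{cor:symbolcomposition} truly applies (the derivatives of $g_{k,m}$ blow up at $\pm 1$), is the delicate bookkeeping step.
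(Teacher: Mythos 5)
Your proposal is correct and follows essentially the same route as the paper's own proof. The paper also splits $\rr 2 \setminus 0$ into two $\sigma$-conic overlapping pieces (parameterized there as $\Omega_{C_1} = \{|y| < C_1|\eta|^{1/\sigma}\}$ and $\wt\Omega_{C_2} = \{|y| > C_2|\eta|^{1/\sigma}\}$, which are the same family of sets as your $\Omega_1$, $\Omega_2$), uses the two explicit representations \eqref{eq:flowform1}--\eqref{eq:flowform2} on those pieces, observes that the arguments of $h_1$ and $h_2$ stay in a compact subinterval of $(-1,1)$, and then invokes Proposition \ref{prop:symbolcomposition} / Corollary \ref{cor:symbolcomposition} for the composition with $g_1,g_2 \in \cB^\infty(\ro)$ and the symbol facts $\psi_\delta c^{1/(2k)} \in G^{1,\sigma}$, $\psi_\delta c^{1/(2m)} \in G^{\sigma,\sigma}$, $\psi_\delta c^{p-p_c} \in G^{2k(p-p_c),\sigma}$ (quoted from \cite[Lemma~3.6]{CRW}). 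The only cosmetic difference is that you unfold the Fa\`a di Bruno bookkeeping inline rather than citing Proposition \ref{prop:symbolcomposition} directly, and you make the final Leibniz step explicit where the paper leaves it implicit; both are sound.
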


\begin{proof}
Let $0 < \ep < 1$ be sufficiently small to guarantee $1 < (1-\ep)^{2k} + (1-\ep)^{2m}$. 
Denoting
\begin{equation*}
C_{\ep,k} : = ( 1 - \ep ) \left( 1 - (1-\ep)^{2k} \right)^{- \frac1{2k}} > 0
\end{equation*}
then $(1-\ep)^{2k} + (1-\ep)^{2m} > 1$ is equivalent to $C_{\ep,k}^{2k} C_{\ep,m}^{2m} > 1$. 
Define for $C > 0$
\begin{align*}
\Omega_{C} & = \{ (y,\eta) \in \rr 2 \setminus 0: |y| < C |\eta|^{\frac{1}{\sigma}} \} \subseteq \rr 2 \setminus 0, \\
\wt \Omega_{C} & = \{ (y,\eta) \in \rr 2 \setminus 0: |y| > C |\eta|^{\frac{1}{\sigma}} \} \subseteq \rr 2 \setminus 0. 
\end{align*}
If $(y,\eta) \in \rr 2 \setminus \left( \Omega_{C_1} \cup \{0\} \right)$
with $C_1 = C_{\ep,k}$ then 
\begin{equation*}
|y| \geqs C_{\ep,k} |\eta|^{\frac{1}{\sigma}}
> C_{\ep,m}^{-\frac{1}{\sigma}} |\eta|^{\frac{1}{\sigma}}
\end{equation*}
so $(y,\eta) \in \wt \Omega_{C_2}$ where $C_2 = C_{\ep,m}^{-\frac{1}{\sigma}}$. 
We have thus shown

\begin{equation}\label{eq:phasespaceunion}
\Omega_{C_1} \cup  \wt \Omega_{C_2} = \rr 2 \setminus 0. 
\end{equation}

If $(y,\eta) \in \Omega_{C_1}$ then $\eta \neq 0$ 
and $|y| ( y^{2k} + \eta^{2m} )^{ - \frac1{2k} } < 1-\ep$ by the design of $C_1 = C_{\ep,k}$. 
Thus we may use \eqref{eq:flowform1} for the Hamilton flow. 
By the proof of \cite[Lemma~3.6]{CRW} we have $\psi_\delta (y,\eta) y ( y^{2k} + \eta^{2m} )^{ - \frac1{2k} } \in G^{0,\sigma}$. 
Since $h_1 \in \cB^\infty( (-1+\ep, 1 - \ep) )$ 
it thus follows from Proposition \ref{prop:symbolcomposition} that 
\begin{equation*}
\left| \pdd y \alpha \pdd \eta \beta \left( h_1 \left( y ( y^{2k} + \eta^{2m} )^{ - \frac1{2k} } \right) \right) \right|
\lesssim \theta_\sigma (y,\eta)^{ - \alpha - \sigma \beta}, \quad \alpha, \beta \in \no, \quad  (y,\eta) \in \Omega_{C_1}.
\end{equation*}

From the proof of \cite[Lemma~3.6]{CRW} we also get $\psi_\delta (y,\eta) ( y^{2k} + \eta^{2m} )^{ p - \frac1{2m} - \frac1{2k} } \in G^{2 p k - \sigma - 1,\sigma}$, 
and \eqref{eq:pcrit} gives $2 p k - \sigma - 1 = 2 k (p-p_c)$.
It follows that 
\begin{align*}
& \left| \pdd y \alpha \pdd \eta \beta 
\left( q( t ) ( y^{2k} + \eta^{2m} )^{p - \frac1{2m} - \frac1{2k}}  + h_1 \left( y ( y^{2k} + \eta^{2m} )^{ - \frac1{2k} } \right) \right)
\right| \\
& \lesssim \theta_\sigma (y,\eta)^{ 2 k \max(p - p_c ,0) - \alpha - \sigma \beta}, \quad \alpha, \beta \in \no, \quad  (y,\eta) \in \Omega_{C_1}.
\end{align*}
Using $g_1, g_2 \in \cB^\infty( \ro)$
we apply Proposition \ref{prop:symbolcomposition} once again and conclude that 
\eqref{eq:hamiltonflowderivatives} holds when $(y,\eta) \in \Omega_{C_1} = \Omega_{C_{\ep,k}}$. 

By \eqref{eq:phasespaceunion} it remains to consider $(y,\eta) \in \wt \Omega_{C_2}$
in which case $y \neq 0$, 
$|\eta| ( y^{2k} + \eta^{2m} )^{ - \frac1{2m} } < 1-\ep$ by the design of $C_2 = C_{\ep,m}^{-\frac{1}{\sigma}}$, 
and we may use \eqref{eq:flowform2}. 
Again by the proof of \cite[Lemma~3.6]{CRW} we have $\psi_\delta (y,\eta) \eta ( y^{2k} + \eta^{2m} )^{ - \frac1{2m} } \in G^{0,\sigma}$. 
Since $h_2 \in \cB^\infty( (-1+\ep, 1 - \ep) )$ it follows from Proposition \ref{prop:symbolcomposition} that 
\begin{equation*}
\left| \pdd y \alpha \pdd \eta \beta \left( h_2 \left( \eta ( y^{2k} + \eta^{2m} )^{ - \frac1{2 m} } \right) \right) \right|
\lesssim \theta_\sigma (y,\eta)^{ - \alpha - \sigma \beta}, \quad \alpha, \beta \in \no, \quad  (y,\eta) \in \wt  \Omega_{C_2}.
\end{equation*}
As above we obtain
\begin{align*}
& \left| \pdd y \alpha \pdd \eta \beta 
\left( q( t ) ( y^{2k} + \eta^{2m} )^{p - \frac1{2m} - \frac1{2k}}  + h_2 \left( \eta ( y^{2k} + \eta^{2m} )^{ - \frac1{2m} } \right) \right)
\right| \\
& \lesssim \theta_\sigma (y,\eta)^{ 2 k \max(p - p_c ,0) - \alpha - \sigma \beta}, \quad \alpha, \beta \in \no, \quad  (y,\eta) \in \wt \Omega_{C_2}.
\end{align*}
Using $g_1, g_2 \in \cB^\infty( \ro)$ we apply Proposition \ref{prop:symbolcomposition} once again and conclude that 
\eqref{eq:hamiltonflowderivatives} holds when $(y,\eta) \in \wt \Omega_{C_2}$. 
\end{proof}

From Theorem \ref{thm:flowestimates} we see that the Hamilton flow behaves like 
anisotropic Shubin symbols, provided the exponent $0 \neq p \leqs p_c$. 

\begin{corollary}\label{cor:flowanisosymbol}
Let $k,m \in \no \setminus 0$, $\sigma = \frac{k}{m}$, $\delta > 0$, $0 \neq p \leqs p_c$, 
and consider the Hamilton flow $\chi_t: \rr 2 \setminus 0 \to \rr 2 \setminus 0$ for the Hamiltonian \eqref{eq:hamiltonian}, with coordinates $\chi_t(y,\eta) = ( x(t,y,\eta), \xi(t,y,\eta) )$,  
$x(t) = x(t,\cdot, \cdot)$ and $\xi(t) = \xi(t,\cdot,\cdot)$. 
Then for any $t \in \ro$ we have $\psi_\delta x(t) \in G^{1,\sigma}$ and $\psi_\delta \xi(t) \in G^{\sigma,\sigma}$. 
\end{corollary}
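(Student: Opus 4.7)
The plan is to apply Theorem \ref{thm:flowestimates} directly, observing that the hypothesis $0 \neq p \leqs p_c$ makes the critical growth factor vanish. Indeed, when $p \leqs p_c$ we have $\max(p - p_c, 0) = 0$, so the estimates \eqref{eq:hamiltonflowderivatives} collapse to
\begin{equation*}
\left| \pdd y \alpha \pdd \eta \beta \left( \psi_\delta(y,\eta) x(t, y,\eta) \right) \right|
\lesssim \theta_\sigma(y,\eta)^{1 - \alpha - \sigma \beta},
\end{equation*}
\begin{equation*}
\left| \pdd y \alpha \pdd \eta \beta \left( \psi_\delta(y,\eta) \xi(t, y,\eta) \right) \right|
\lesssim \theta_\sigma(y,\eta)^{\sigma - \alpha - \sigma \beta},
\end{equation*}
for all $\alpha, \beta \in \no$ and $(y,\eta) \in \rr 2$. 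These are precisely the defining estimates \eqref{eq:anisosymbolest} for $G^{1,\sigma}$ and $G^{\sigma,\sigma}$ respectively in dimension $d=1$.

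Before invoking Theorem \ref{thm:flowestimates} I would confirm that $\psi_\delta\, x(t)$ and $\psi_\delta\, \xi(t)$ belong to $C^\infty(\rr 2)$. The cutoff $\psi_\delta$ vanishes on $\rB_{\delta/2}$, so only values with $|(y,\eta)| \geqs \delta/2 > 0$ contribute; on this region the Hamilton flow $\chi_t$ is smooth in $(y,\eta)$ by Theorem \ref{thm:hamiltonflowperiodic} (the solution depends smoothly on the initial data away from the equilibrium point, and is globally defined and smooth in $t$). Thus the products $\psi_\delta\, x(t)$ and $\psi_\delta\, \xi(t)$ extend by zero to smooth functions on all of $\rr 2$.

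With smoothness and the symbol estimates in hand, the two memberships $\psi_\delta\, x(t) \in G^{1,\sigma}$ and $\psi_\delta\, \xi(t) \in G^{\sigma,\sigma}$ follow directly from the definition of the anisotropic Shubin symbol classes. There is essentially no obstacle to overcome here, as the corollary is a clean reading of Theorem \ref{thm:flowestimates} in the subcritical and critical regime $p \leqs p_c$; the only thing worth noting is that the critical case $p = p_c$ is included precisely because $\max(p-p_c,0)$ is taken to be $0$ at equality, reflecting the fact that when $p = p_c$ the Hamiltonian is anisotropically homogeneous (cf.\ Remark \ref{rem:hamiltonflow2}) and the flow has the same anisotropic scaling behaviour as its initial data.
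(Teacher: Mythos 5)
Your argument is correct and coincides with the paper's (implicit) reasoning: the corollary is simply Theorem \ref{thm:flowestimates} read with $\max(p-p_c,0)=0$, which collapses the estimates \eqref{eq:hamiltonflowderivatives} to the defining bounds for $G^{1,\sigma}$ and $G^{\sigma,\sigma}$. Your added check that $\psi_\delta\, x(t)$ and $\psi_\delta\, \xi(t)$ are genuinely smooth on $\rr 2$ (cutoff kills the origin, flow is smooth away from it) is a sensible detail that the paper leaves implicit.
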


Next we will use Theorem \ref{thm:flowestimates} in order to study the behavior of a symbol of order zero composed with the inverse Hamilton flow
for the Hamiltonian \eqref{eq:hamiltonian}. 

\begin{proposition}\label{prop:compositionsymbolflow}
Let $k,m \in \no \setminus 0$, $\sigma = \frac{k}{m}$, $\delta > 0$, $p \in \ro \setminus 0$, 
and consider the Hamilton flow $\chi_t: \rr 2 \setminus 0 \to \rr 2 \setminus 0$ for the Hamiltonian \eqref{eq:hamiltonian}.  
If $q \in G^{0,\sigma}(\rr 2)$ and $q(t,y,\eta) = \psi_\delta(y,\eta) q(\chi_t^{-1}(y,\eta))$ 
then we have for every $\alpha, \beta \in \no$ and $t \in \ro$
\begin{equation}\label{eq:compderivative1}
|\pdd y \alpha \pdd \eta \beta q(t,y,\eta)| 
\lesssim \theta_\sigma(y,\eta)^{2 k \max(p - p_c,0) (\alpha+\beta) - \alpha - \sigma \beta}. 
\end{equation}
\end{proposition}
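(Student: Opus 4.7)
The plan is to reduce the computation to a chain-rule / Faà di Bruno argument for the composition $q \circ \chi_{-t}$, combining Theorem \ref{thm:flowestimates} for the inner map with the symbol estimates $q \in G^{0,\sigma}$, in direct analogy with Proposition \ref{prop:symbolcomposition}. Specifically, since the Hamilton flow is a one-parameter group on $\rr 2 \setminus 0$, we have $\chi_t^{-1} = \chi_{-t}$, so $q(t,y,\eta) = \psi_\delta(y,\eta) \, q(x(-t,y,\eta), \xi(-t,y,\eta))$, and Theorem \ref{thm:flowestimates} applies with $t$ replaced by $-t$. The cutoff $\psi_\delta$ vanishes near the origin, and outside a larger bounded set $\psi_\delta \equiv 1$, so its derivatives contribute only compactly supported terms that can be absorbed into implicit constants. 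On $\supp \psi_\delta$, conservation of the Hamiltonian along $\chi_t$ yields $x(-t)^{2k} + \xi(-t)^{2m} = y^{2k} + \eta^{2m}$, and combined with the comparison $\theta_\sigma(X,\Xi)^{2k} \asymp 1 + X^{2k} + \Xi^{2m}$ this gives $\theta_\sigma(\chi_{-t}(y,\eta)) \asymp \theta_\sigma(y,\eta)$, which will transfer bounds on derivatives of $q$ at $\chi_{-t}(y,\eta)$ into bounds in terms of $\theta_\sigma(y,\eta)$.

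Next I apply the multivariate Faà di Bruno formula (or proceed by induction on $\alpha + \beta$) to $\partial_y^\alpha \partial_\eta^\beta q(x(-t), \xi(-t))$. Each resulting term has the form $(\partial_X^r \partial_\Xi^s q)(x(-t), \xi(-t)) \prod_{j=1}^r \partial_y^{a_j} \partial_\eta^{b_j} x(-t) \prod_{l=1}^s \partial_y^{c_l} \partial_\eta^{d_l} \xi(-t)$ with $a_j + b_j \geqs 1$, $c_l + d_l \geqs 1$, $\sum_j a_j + \sum_l c_l = \alpha$, and $\sum_j b_j + \sum_l d_l = \beta$. Using $q \in G^{0,\sigma}$ together with the $\theta_\sigma$-comparison just established, $(\partial_X^r \partial_\Xi^s q)(x(-t), \xi(-t)) \lesssim \theta_\sigma(y,\eta)^{-r - \sigma s}$, and Theorem \ref{thm:flowestimates} bounds each derivative of $x(-t)$, respectively $\xi(-t)$, by $\theta_\sigma^{2k\max(p-p_c,0)(a_j+b_j) + 1 - a_j - \sigma b_j}$, respectively $\theta_\sigma^{2k\max(p-p_c,0)(c_l+d_l) + \sigma - c_l - \sigma d_l}$.

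Multiplying these factors and summing the exponents telescopes to $-r - \sigma s + 2k\max(p - p_c,0)(\alpha + \beta) + (r + \sigma s) - \alpha - \sigma \beta = 2k\max(p - p_c,0)(\alpha + \beta) - \alpha - \sigma \beta$, which is precisely the exponent in \eqref{eq:compderivative1}. Since for fixed $(\alpha,\beta)$ there are only finitely many Faà di Bruno terms, this completes the argument, with the trivial case $\alpha = \beta = 0$ immediate from $q \in G^{0,\sigma}$. The principal obstacle is the combinatorial bookkeeping in the vector-valued Faà di Bruno formula, where the two inner components $x(-t)$ and $\xi(-t)$ contribute different base orders $1$ and $\sigma$; however the telescoping identity above shows that the anisotropic exponents combine cleanly in every term, so the proof is structurally a direct adaptation of Proposition \ref{prop:symbolcomposition}.
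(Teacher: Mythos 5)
Your argument is correct and essentially the same as the paper's: both reduce to a chain-rule computation for $q\circ\chi_{-t}$, use conservation of the Hamiltonian to obtain $\theta_\sigma(\chi_{-t}(y,\eta))\asymp\theta_\sigma(y,\eta)$, and combine $q\in G^{0,\sigma}$ with Theorem \ref{thm:flowestimates} so that the exponents telescope. The only cosmetic difference is that you invoke the multivariate Fa\`a di Bruno formula directly, whereas the paper reaches the same combinatorial structure via an induction on $\alpha+\beta$ proving the auxiliary estimate \eqref{eq:compderivative2}.
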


\begin{proof} 
Again we may ignore the term $\psi_\delta$.
We note that the conservation of the Hamiltonian implies 
\begin{equation}\label{eq:weightflow}
\begin{aligned}
\theta_\sigma( \chi_t^{-1} (y,\eta)) 
& = \theta_\sigma \left( x(-t,y,\eta), \xi(-t,y,\eta) \right) \\
& \asymp \left( 1 + |x(-t,y,\eta)|^{2k} + |\xi(-t,y,\eta)|^{2m} \right)^{\frac1{2k}} \\
& = \left( 1 + y^{2k} + \eta^{2m} \right)^{\frac1{2k}}
\asymp \theta_\sigma(y,\eta)
\end{aligned}
\end{equation}
for all $t \in \ro$. 

Set $M = 2k \max( p - p_c,0)$.
We will prove the following generalization of \eqref{eq:compderivative1}.  
For any $\alpha, \beta, \gamma, \kappa \in \no$ we have
\begin{equation}\label{eq:compderivative2}
|\pdd y \alpha \pdd \eta \beta \left( \pdd x \gamma \pdd \xi \kappa q ( \chi_{-t}(y,\eta) ) \right)| 
\lesssim \theta_\sigma(y,\eta)^{ M (\alpha+\beta) - \alpha - \sigma \beta - \gamma - \sigma \kappa}. 
\end{equation}

The validity of \eqref{eq:compderivative2} for $\alpha = \beta = 0$ and all $\gamma, \kappa \in \no$ 
is a consequence of \eqref{eq:weightflow} and 
the assumption $q \in G^{0,\sigma}$. 
Using Theorem \ref{thm:flowestimates} we prove \eqref{eq:compderivative2} by induction with respect to $\alpha + \beta$
starting from $\alpha + \beta = 1$. 
We have
\begin{align*}
& |\partial_y \left( \pdd x \gamma \pdd \xi \kappa q ( \chi_{-t}(y,\eta) ) \right) | \\
& \leqs \left| \pdd x {\gamma+1} \pdd \xi \kappa q ( x(-t,y,\eta), \xi(-t,y,\eta) ) \partial_y x( -t,y,\eta)\right| \\ &+ \left|\pdd x \gamma \pdd \xi {\kappa + 1} q ( x(-t,y,\eta), \xi(t,y,\eta) ) \partial_y \xi( -t,y,\eta) \right| \\
& \lesssim \theta_\sigma(y,\eta)^{- (\gamma +1) - \sigma \kappa + M} + \theta_\sigma(y,\eta)^{- \gamma - \sigma (\kappa+1) + M - 1 + \sigma}
\asymp \theta_\sigma(y,\eta)^{M-1 - \gamma - \sigma \kappa} 
\end{align*}
and 
\begin{align*}
& |\partial_\eta \left( \pdd x \gamma \pdd \xi \kappa q ( \chi_{-t}(y,\eta) ) \right) | \\
& \leqs \left| \pdd x {\gamma+1} \pdd \xi \kappa q ( x(-t,y,\eta), \xi(-t,y,\eta) ) \partial_\eta x( -t,y,\eta) \right| \\ &+\left| \pdd x \gamma \pdd \xi {\kappa+1} q ( x(-t,y,\eta), \xi(-t,y,\eta) ) \partial_\eta \xi( -t,y,\eta) \right| \\
& \lesssim \theta_\sigma(y,\eta)^{- (\gamma +1) - \sigma \kappa + M + 1 -\sigma} + \theta_\sigma(y,\eta)^{- \gamma - \sigma (\kappa+1) + M + \sigma - \sigma }
\asymp \theta_\sigma(y,\eta)^{M-\sigma - \gamma - \sigma \kappa}. 
\end{align*}
Thus \eqref{eq:compderivative2} is true when $\alpha + \beta = 1$.

In the induction step we assume that \eqref{eq:compderivative2} is true for $\alpha + \beta = n \geqs 1$. 
This gives 
\begin{align*}
& \left| \pdd y {\alpha+1} \pdd \eta \beta \left( \pdd x \gamma \pdd \xi \kappa q ( \chi_{-t}(y,\eta) ) \right) \right| \\
& = \left| \pdd y \alpha \pdd \eta \beta \left( \pdd x {\gamma+1} \pdd \xi \kappa q ( x(-t,y,\eta), \xi(-t,y,\eta) ) \right) \partial_y x( -t,y,\eta) \right. \\
& \left. \qquad \qquad \qquad + \pdd y \alpha \pdd \eta \beta \left( \pdd x \gamma \pdd \xi {\kappa + 1} q ( x(-t,y,\eta), \xi(t,y,\eta) ) \right) \partial_y \xi( -t,y,\eta) \right| \\
& \lesssim \theta_\sigma(y,\eta)^{M(\alpha+\beta) - \alpha - \sigma \beta - (\gamma +1) - \sigma \kappa + M} 
+ \theta_\sigma(y,\eta)^{M(\alpha+\beta) - \alpha - \sigma \beta - \gamma - \sigma (\kappa+1) + M + \sigma - 1} \\
& \asymp \theta_\sigma(y,\eta)^{M(\alpha+1+\beta) - (\alpha+1) - \sigma \beta - \gamma - \sigma \kappa} 
\end{align*}
and 
\begin{align*}
& \left| \pdd y \alpha \pdd \eta {\beta+1} \left( \pdd x \gamma \pdd \xi \kappa q ( \chi_{-t}(y,\eta) ) \right) \right| \\
& = \left| \pdd y \alpha \pdd \eta \beta \left( \pdd x {\gamma+1} \pdd \xi \kappa q ( x(-t,y,\eta), \xi(-t,y,\eta) ) \right) \partial_\eta x( -t,y,\eta) \right. \\
& \left. \qquad \qquad \qquad + \pdd y \alpha \pdd \eta \beta \left( \pdd x \gamma \pdd \xi {\kappa + 1} q ( x(-t,y,\eta), \xi(t,y,\eta) ) \right) \partial_\eta \xi( -t,y,\eta) \right| \\
& \lesssim \theta_\sigma(y,\eta)^{M(\alpha+\beta) - \alpha - \sigma \beta - (\gamma +1) - \sigma \kappa + M + 1 - \sigma} 
+ \theta_\sigma(y,\eta)^{M(\alpha+\beta) - \alpha - \sigma \beta - \gamma - \sigma (\kappa+1) + M} \\
& \asymp \theta_\sigma(y,\eta)^{M (\alpha+\beta + 1) - \alpha - \sigma (\beta+1) - \gamma - \sigma \kappa}. 
\end{align*}
This proves the induction step. 
Thus \eqref{eq:compderivative2} holds for all $\alpha, \beta, \gamma, \kappa \in \no$
which implies \eqref{eq:compderivative1} for all $\alpha, \beta \in \no$ and $t \in \ro$.
\end{proof}

\begin{corollary}\label{cor:symbolflowsymbol}
Let $k,m \in \no \setminus 0$, $\sigma = \frac{k}{m}$, $\delta > 0$, $0 \neq p \leqs p_c$, 
and consider the Hamilton flow $\chi_t: \rr 2 \setminus 0 \to \rr 2 \setminus 0$ for the Hamiltonian \eqref{eq:hamiltonian}. 
If $q \in G^{0,\sigma}(\rr 2)$ and $q(t,y,\eta) = \psi_\delta(y,\eta) q(\chi_t^{-1}(y,\eta))$ 
then $q(t, \cdot, \cdot) \in G^{0,\sigma}$ for all $t \in \ro$. 
\end{corollary}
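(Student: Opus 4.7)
The plan is to apply Proposition \ref{prop:compositionsymbolflow} and observe that the critical hypothesis $0 \neq p \leqs p_c$ kills the positive correction term in the exponent, reducing the estimate exactly to the defining condition of $G^{0,\sigma}$.

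More concretely, first I would note that under the assumption $p \leqs p_c$ we have $p - p_c \leqs 0$, so
\begin{equation*}
M := 2k \max(p - p_c, 0) = 0.
\end{equation*}
Then I would apply Proposition \ref{prop:compositionsymbolflow} with the given $q \in G^{0,\sigma}$ and $q(t,y,\eta) = \psi_\delta(y,\eta) q(\chi_t^{-1}(y,\eta))$. The estimate \eqref{eq:compderivative1} becomes
\begin{equation*}
|\pdd y \alpha \pdd \eta \beta q(t,y,\eta)|
\lesssim \theta_\sigma(y,\eta)^{-\alpha - \sigma \beta}, \quad \alpha, \beta \in \no, \quad t \in \ro,
\end{equation*}
since the factor $M(\alpha+\beta)$ vanishes. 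This is precisely the symbol estimate \eqref{eq:anisosymbolest} defining $G^{r,\sigma}$ with $r = 0$, so $q(t,\cdot,\cdot) \in G^{0,\sigma}$ for every $t \in \ro$, which is the claim.

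Since the proof reduces to reading off the exponent in Proposition \ref{prop:compositionsymbolflow}, there is essentially no obstacle; the entire content lies in the already-proven proposition. The only point worth emphasizing in the write-up is the role of the critical threshold $p_c$: it is precisely the largest value of $p$ for which the Hamilton flow, composed with a zero-order anisotropic Shubin symbol, preserves the symbol class $G^{0,\sigma}$ (equivalently, the flow acts tamely enough on anisotropic symbols), and this tameness is exactly what will be needed in Section \ref{sec:propanisogabor} to prove the propagation result Theorem \ref{thm:propanisogabor} in the subcritical and critical regimes.
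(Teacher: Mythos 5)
Your proof is correct and is exactly the intended derivation: the paper states Corollary \ref{cor:symbolflowsymbol} as an immediate consequence of Proposition \ref{prop:compositionsymbolflow}, and your observation that $p \leqs p_c$ forces $2k\max(p-p_c,0)=0$, collapsing \eqref{eq:compderivative1} to the defining estimate \eqref{eq:anisosymbolest} with $r=0$, is precisely the reasoning the paper leaves implicit.
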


\begin{remark}\label{rem:compflow}
The idea to compose a symbol $q$ with the inverse Hamilton flow $\chi_{-t}$ into $q \circ \chi_t^{-1}$ 
is fundamental in many proofs of propagation of singularities, cf. \cite[Theorem~8.3]{CRW}, \cite[Theorem~23.1.4]{Hormander2} and \cite[Theorem~4.2]{Nicola2}. 
When $p \leqs p_c$ Corollary \ref{cor:symbolflowsymbol} therefore supports Theorem \ref{thm:propanisogabor} in the sense that the time-dependent symbol 
$q(t,\cdot)$ remains in the same symbol class as $q$. 

When on the other hand $p > p_c$ the estimates given by Proposition \ref{prop:compositionsymbolflow}
are not sufficient for the purpose of propagation of the wave front set. 
Indeed $q(t,\cdot)$ is not guaranteed to be a symbol when $t \neq 0$. 
\end{remark}

\section{Proof of Theorem \ref{thm:propanisogabor}}\label{sec:propanisogabor}
 
In the sequel we need a lemma concerning $\sigma$-conic cutoff symbols (cf. \cite[Lemma~3.5]{RW2} and \cite[Lemma~3.3 and Remark~3.4]{Wahlberg1}). 
We use $\sigma$-conic neighborhoods, defined and denoted for $\sigma, \ep > 0$ and $z_0 \in \sr {2d-1}$ as
\begin{equation*}
\Gamma_{z_0, \ep} = \Gamma_{\sigma, z_0, \ep}
= \{ (x,\xi) \in \rr {2d} \setminus 0, \ | z_0 - \pi_\sigma(x,\xi) | < \ep \}
\subseteq  T^* \rr {2d} \setminus 0.
\end{equation*}
Here $\pi_\sigma: \rr {2d} \setminus 0 \to \sr {2d-1}$ is the anisotropic projection
\begin{equation}\label{eq:projection}
\pi_\sigma(x,\xi) = \left( \lambda_\sigma(x,\xi)^{-1} x, \lambda_\sigma(x,\xi)^{-\sigma} \xi \right), \quad (x,\xi) \in \rr {2d} \setminus 0, 
\end{equation}
along the curve $\ro_+ \ni \lambda \mapsto (\lambda x, \lambda^\sigma \xi)$, onto $\sr {2d-1}$, 
and the smooth function $\lambda_\sigma: \rr {2d} \setminus 0 \to \ro_+$ is defined implicitly 
by
\begin{equation*}
\lambda_\sigma (x,\xi)^{-2} |x|^2 + \lambda_\sigma (x,\xi)^{-2 \sigma} |\xi|^2 = 1
\end{equation*}
(cf. \cite[Section~3]{RW2}).

\begin{lemma}\label{lem:cutoffsymbol}
Let $\sigma, r > 0$, $0 < \ep < \delta \leqs 1$ and $z_0 \in \sr {2d-1}$. 
There exists $q \in G^{0,\sigma}$ such that 
$0 \leqs q \leqs 1$,
$\supp q \subseteq \Gamma_{z_0, \delta} \setminus \rB_{r/2}$
and $q |_{\Gamma_{z_0, \ep} \setminus \overline \rB_{r} } \equiv 1$. 
\end{lemma}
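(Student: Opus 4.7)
The plan is to construct $q$ as a product $q = f_1 f_2$, where $f_1$ is a smooth radial cutoff that vanishes near the origin and equals $1$ outside $\rB_r$, and $f_2$ is a smooth angular cutoff obtained by composing a test function on $\rr{2d}$ with the anisotropic projection $\pi_\sigma$ from \eqref{eq:projection}. The factor $f_1$ forces $\supp q$ to avoid $\rB_{r/2}$, while $f_2$ localizes around the $\sigma$-conic ray through $z_0$.

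Concretely, I would choose $\chi_1 \in C^\infty(\ro)$ with $0 \leqs \chi_1 \leqs 1$, $\chi_1(t)=0$ for $t \leqs (r/2)^2$ and $\chi_1(t)=1$ for $t \geqs r^2$, and set $f_1(x,\xi) = \chi_1(|x|^2+|\xi|^2)$. Next, pick $\delta_0 \in (\ep,\delta)$ and $\chi_2 \in C_c^\infty(\rr{2d})$ with $0 \leqs \chi_2 \leqs 1$, $\chi_2 \equiv 1$ on $\{z \in \rr{2d}:|z-z_0| \leqs \ep\}$ and $\supp \chi_2 \subseteq \{z \in \rr{2d}:|z-z_0| < \delta_0\}$, and define $f_2(x,\xi) = \chi_2(\pi_\sigma(x,\xi))$ on $\rr{2d}\setminus 0$. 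Setting $q = f_1 f_2$ with $q(0)=0$ produces a function smooth on all of $\rr{2d}$, since $f_1$ vanishes on a neighborhood of the origin where $\pi_\sigma$ is singular. The support and value-one properties are immediate from the construction: $\supp q \subseteq (\rr{2d}\setminus\rB_{r/2}) \cap \Gamma_{z_0,\delta_0} \subseteq \Gamma_{z_0,\delta} \setminus \rB_{r/2}$, while on $\Gamma_{z_0,\ep} \setminus \overline{\rB_r}$ one has $f_1 \equiv 1$ and $\chi_2(\pi_\sigma(x,\xi)) \equiv 1$ because $|\pi_\sigma(x,\xi)-z_0| < \ep$.

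The core verification is $q \in G^{0,\sigma}$. The factor $f_1$ is bounded and has all derivatives of positive order compactly supported, so it lies trivially in $G^{0,\sigma}$. For $f_2$, I would exploit the fact that $\pi_\sigma$ is smooth on $\rr{2d}\setminus 0$ and anisotropically homogeneous of degree zero: $\pi_\sigma(tx, t^\sigma \xi) = \pi_\sigma(x,\xi)$ for $t>0$. Differentiating this identity in $x$ and $\xi$, one finds that $\partial_x^\alpha \partial_\xi^\beta \pi_\sigma$ is anisotropically homogeneous of degree $-|\alpha|-\sigma|\beta|$, and combining with $\lambda_\sigma \asymp \theta_\sigma$ on $\rr{2d}\setminus 0$ yields
\begin{equation*}
|\partial_x^\alpha \partial_\xi^\beta \pi_\sigma(x,\xi)| \lesssim \theta_\sigma(x,\xi)^{-|\alpha|-\sigma|\beta|}, \qquad (x,\xi) \in \rr{2d}\setminus 0.
\end{equation*}
An application of Fa\`a di Bruno's formula to $\chi_2 \circ \pi_\sigma$ (this is essentially the $r=0$ case of Proposition \ref{prop:symbolcomposition} applied componentwise to $\pi_\sigma$) transfers the same bound to $f_2$ on $\rr{2d}\setminus 0$. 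The Leibniz rule, together with the vanishing of $f_1$ in $\rB_{r/2}$, then gives $q \in G^{0,\sigma}$ with the required estimates on all of $\rr{2d}$.

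The step I expect to be the main obstacle is the derivative estimate for $\pi_\sigma$ and its composition with $\chi_2$; once the anisotropic homogeneity of $\pi_\sigma$ is spelled out, the rest reduces to a routine Fa\`a di Bruno computation. This is the anisotropic analogue of the standard construction of conic cutoff symbols in Shubin's calculus, and closely parallels \cite[Lemma~3.5]{RW2} and \cite[Lemma~3.3]{Wahlberg1}, which could alternatively be quoted directly if one wishes to shorten the exposition.
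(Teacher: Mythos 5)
Your construction is essentially the same as the paper's: a product of a compactly supported radial cutoff (vanishing near the origin, identically $1$ outside $\rB_r$) with an angular cutoff of the form $\chi_2\circ\pi_\sigma$. The only genuine difference is in how the $G^{0,\sigma}$ bound on the angular factor is obtained: you differentiate the homogeneity identity for $\pi_\sigma$ to get anisotropic homogeneity of $\partial_x^\alpha\partial_\xi^\beta\pi_\sigma$ of degree $-|\alpha|-\sigma|\beta|$ and then invoke Fa\`a di Bruno for the composition with $\chi_2$, whereas the paper observes directly that $\psi=\fy\circ\pi_\sigma$ itself satisfies $\psi(\lambda x,\lambda^\sigma\xi)=\psi(x,\xi)$, so that its $(\alpha,\beta)$-derivative is anisotropically homogeneous of degree $-|\alpha|-\sigma|\beta|$ without any chain-rule bookkeeping. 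The paper's route is marginally more economical (no Fa\`a di Bruno, no componentwise argument on $\pi_\sigma$), but both are valid and give the same estimate. One small imprecision worth fixing: you write $\lambda_\sigma\asymp\theta_\sigma$ on $\rr{2d}\setminus 0$, but this equivalence only holds away from the origin (as $(x,\xi)\to 0$ one has $\lambda_\sigma\to 0$ while $\theta_\sigma\to 1$); since $f_1$ kills a neighbourhood of $0$ this is harmless, but the statement should be restricted accordingly.
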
 
 
\begin{proof}
Let $\fy \in C_c^\infty (\rr {2d})$ satisfy 
$0 \leqs \fy \leqs 1$, 
$\supp \fy \subseteq z_0 + \rB_{\delta}$ 
and $\fy |_{z_0 + \rB_{\ep}} \equiv 1$ \cite[Theorem~1.4.1]{Hormander2}. 
Let $g \in C^\infty(\ro)$ satisfy $0 \leqs g \leqs 1$, $g(x) = 0$ if $x \leqs \frac12$ and $g(x) = 1$ if $x \geqs 1$.  
Set 
\begin{equation}\label{eq:homogen1}
\psi (\lambda x, \lambda^\sigma \xi) = \fy (x,\xi), \quad (x,\xi) \in \sr {2d-1}, \quad \lambda > 0, 
\end{equation}
and 
\begin{equation}\label{eq:cutoff1}
q (z) = g ( r^{-1} |z| ) \psi (z), \quad z \in \rr {2d}. 
\end{equation}

Then \eqref{eq:homogen1} can be written
\begin{equation*}
\psi (x,\xi) = \fy ( \pi_\sigma (x,\xi) ), \quad (x,\xi) \in \rr {2d} \setminus 0, 
\end{equation*}
and it follows that $\psi \in C^\infty( \rr {2d} \setminus 0 )$, 
and thus 
$q \in C^\infty (\rr {2d})$. 
The properties 
$q |_{\Gamma_{z_0, \ep} \setminus \overline \rB_{r} } \equiv 1$ and 
$\supp q \subseteq \Gamma_{z_0, \delta} \setminus \rB_{r/2}$ follow.

If $(x,\xi) \in \rr {2d} \setminus 0$ and $\lambda > 0$ then, since 
$\pi_\sigma ( \lambda x, \lambda^\sigma \xi) = \pi_\sigma (x, \xi)$, we have
\begin{equation*}
\psi (\lambda x, \lambda^\sigma \xi) = \psi( x, \xi )
\end{equation*}
which gives 
\begin{equation}\label{eq:homogeneous3}
(\pdd x \alpha \pdd \xi \beta \psi) (\lambda x, \lambda^\sigma \xi)
= \lambda^{-|\alpha| - \sigma |\beta|}  \pdd x \alpha \pdd \xi \beta \psi( x, \xi ), \quad (x,\xi) \in \rr {2d} \setminus 0, 
\quad \lambda > 0, \quad \alpha, \beta \in \nn d. 
\end{equation}

Let $(y,\eta) \in \rr {2d} \setminus \rB_1$.
Then $(y,\eta) = (\lambda x, \lambda^\sigma \xi)$ for a unique $(x,\xi) \in \sr {2d-1}$ and 
$\lambda = \lambda_\sigma (y,\eta) \geqs 1$.
Combining
\begin{equation*}
1 + |y| + |\eta|^{\frac1\sigma} = 1 + \lambda ( |x| + |\xi|^{\frac1\sigma} ) \asymp 1+ \lambda
\end{equation*}
with \eqref{eq:homogeneous3} we obtain for any $\alpha, \beta \in \nn d$
\begin{equation*}
\left| \pdd y \alpha \pdd \eta \beta \psi (y, \eta) \right|
\leqs C_{\alpha,\beta} (1+\lambda)^{ -|\alpha| - \sigma |\beta|} 
\lesssim ( 1 + |y| + |\eta|^{\frac1\sigma} )^{ -|\alpha| - \sigma |\beta|}. 
\end{equation*}
Referring to \eqref{eq:cutoff1} we may conclude that $q \in G^{0,\sigma}$. 
\end{proof}

\vskip0.2cm
 
\textit{Proof of Theorem \ref{thm:propanisogabor}.} 
For the symbol $a$ we have by the proof of \cite[Lemma~3.6]{CRW}
$a \in G^{2 k p, \sigma} \subseteq G^{2 k p_c, \sigma} = G^{1 + \sigma, \sigma}$. 
By $u_0 \in \cS'(\rr d)$, \eqref{eq:temperedmodsp}
and \cite[Corollary~7.12]{CRW} there exists $s_0 \in \ro$ such that $u_0 \in M_{\sigma, s_0}(\rr d)$ and a unique solution $u =u(t,\cdot)\in C(\ro, M_{\sigma,s_0}(\rr d))$
to \eqref{eq:SchrCP}. In particular $u(t,\cdot) \in \cS'(\rr d)$ for all $t \in \ro$. 
If $p = p_c$ then the propagation result \eqref{eq:propcritical} is a particular case of \cite[Theorem~8.3]{CRW}. 
 
It remains to show \eqref{eq:propsubcritical} when $0 \neq p <  p_c$. 
Then $\alpha = 1 + \sigma - 2 k p = 2 k (p_c - p) > 0$. 
By the proof of \cite[Lemma~3.6]{CRW} we have $a \in G^{2 k p,\sigma} = G^{1 + \sigma - \alpha,\sigma}$.

Suppose $z_0 \notin \WFgs u_0$. 
By the proof of \cite[Proposition~3.5]{Wahlberg1} there exists $\gamma, r > 0$ 
and $q \in G^{0,\sigma}$
such that 
$0 \leqs q \leqs 1$,
$\supp q \subseteq \Gamma_{z_0, 2 \gamma} \setminus \rB_{r/2}$, 
$q |_{\Gamma_{z_0, \gamma} \setminus \overline \rB_{r} } \equiv 1$, 
and $q^w u_0 \in \cS(\rr d)$. 
From $u \in C(\ro, M_{\sigma,s_0}(\rr d))$ and Theorem \ref{thm:PseudoShubinSobolev} we obtain
\begin{equation}\label{eq:modspace2}
q^w u \in C( \ro , M_{\sigma,s_0}(\rr d)). 
\end{equation} 

We use \eqref{eq:modspace2} as the starting point for an iterative procedure. 
Namely we first deduce from \eqref{eq:modspace2} the improved regularity
\begin{equation}\label{eq:regularization1}
q^w u \in C( \ro , M_{\sigma,s_0 + \alpha}(\rr d)). 
\end{equation} 

In fact, cf. \cite[Theorem 23.1.4]{Hormander2} and \cite{Nicola1}, we may 
regard $v = q^w u$ as the solution of the non-homogeneous Cauchy problem 
\begin{equation}\label{eq:SchrodEqIter1}
\begin{cases} 
P v = f \\ 
v(0,\cdot) = v_0
\end{cases}
\end{equation}
where $P = \partial_t + i a^w(x,D)$
and $v_0 = q^w u_0 \in \cS(\rr d) \subseteq M_{\sigma, s_0 + \alpha}(\rr d)$. 
To express $f$ we write, using $P u = 0$,
\begin{equation*}
f = P v = P q^w u = P q^w u - q^w Pu.
\end{equation*}
For the commutator we have 
$P q^w - q^w P = [ P, q^w] = [ i a^w, q^w ]$. 
By \cite[Theorem~18.5.4]{Hormander2} (cf. the proof of \cite[Proposition~8.2]{CRW}) 
the commutator has therefore symbol asymptotic expansion
\begin{equation}\label{eq:commutatorsymbol1}
b = i \left( a \wpr  q - q \wpr a \right) 
\sim \sum_{j=0}^{+\infty} \frac{(-1)^{j}}{(2j + 1)! 2^{ 2 j }} \, \{ a, q \}_{2j+1}
\end{equation}
where 
\begin{equation*}
\{ a, q \}_{j} (x,\xi) 
= (-1)^j \left( \la \partial_x, \partial_\eta \ra - \la \partial_y, \partial_\xi \ra  \right)^{j} 
a(x,\xi) q (y,\eta) \Big|_{(y,\eta) = (x,\xi)}
\end{equation*}
is a bilinear differential operator which extends the Poisson bracket $\{ a, q \}_{1} = \{ a, q \}$ to higher order differential operators, 
cf. \eqref{eq:Poissonbracket}. 

From $a \in G^{1 + \sigma - \alpha,\sigma}$ and $q \in G^{0,\sigma}$ we get 
\begin{equation*}
\{ a, q \} = 
\la \nabla_\xi a, \nabla_x q \ra - \la \nabla_x a, \nabla_\xi q \ra
\in G^{1 - \alpha -1,\sigma} +G^{\sigma - \alpha - \sigma ,\sigma} = G^{-\alpha,\sigma}
\end{equation*}
which by \eqref{eq:commutatorsymbol1} implies $b \in G^{-\alpha,\sigma}$. 
From $v \in C( \ro , M_{\sigma,s_0}(\rr d))$ and Theorem \ref{thm:PseudoShubinSobolev} we obtain
\begin{equation*}
f  = P v = b^w u \in C( \ro , M_{\sigma,s_0 + \alpha}(\rr d) ). 
\end{equation*}
Now \eqref{eq:SchrodEqIter1}, $v_0 \in M_{\sigma, s_0 + \alpha}(\rr d)$ and \cite[Corollary~7.10]{CRW}
show that \eqref{eq:regularization1} holds. 

In the next step we take $0 < \ep < \delta \leqs \gamma$
and define $q_{\ep, \delta} \in G^{0,\sigma}$ as in Lemma \ref{lem:cutoffsymbol}. 
Then $\supp q_{\ep, \delta} \cap \supp (1 - q ) \subseteq \rr {2d}$ is compact. 
By the calculus this yields $r = q_{\ep, \delta} \wpr ( 1 - q ) \in \cS(\rr {2d})$ which combined with
$q^w u_0 \in \cS$
and
Theorem \ref{thm:PseudoShubinSobolev} 
implies
\begin{equation}\label{eq:schwartzmodspace3}
q_{\ep, \delta}^w u_0 = q_{\ep, \delta}^w q^w u_0 + r^w u_0 \in M_{\sigma,s}(\rr d) \qquad \forall s \in \ro
\end{equation}
since $r^w: \cS' \to \cS$ is regularizing. 

Set $w = q_{\ep,\delta}^w u$ and consider the non-homogeneous Cauchy problem 
\begin{equation}\label{eq:SchrodEqIter2}
\begin{cases} 
P w = g \\ 
w(0,\cdot) = w_0
\end{cases} 
\end{equation}
where $w_0 = q_{\ep,\delta}^w u_0 \in M_{\sigma, s_0 + 2 \alpha}(\rr d)$ by 
\eqref{eq:schwartzmodspace3}. 
As above we obtain
\begin{equation*}
g = P q_{\ep,\delta}^w u - q_{\ep,\delta}^w Pu = 
b_{\ep,\delta}^w u
= b_{\ep,\delta}^w q^w u + b_{\ep,\delta}^w (1 - q)^w u
\end{equation*}
with $b_{\ep,\delta} \in G^{-\alpha,\sigma}$ defined by \eqref{eq:commutatorsymbol1} with $q$ replaced by $q_{\ep,\delta}$.

Since $\supp b_{\ep, \delta} \subseteq \Gamma_{z_0,\delta}$ we may conclude that $\supp b_{\ep, \delta} \cap \supp (1 - q ) \subseteq \rr {2d}$
is compact,
which implies that $b_{\ep,\delta}^w (1 - q)^w:  \cS' \to \cS$ is regularizing.  
Now \eqref{eq:regularization1} and again Theorem \ref{thm:PseudoShubinSobolev} 
give $g \in C( \ro , M_{\sigma,s_0 + 2 \alpha}(\rr d) )$. 
By 
\eqref{eq:SchrodEqIter2}, $w_0 \in M_{\sigma, s_0 + 2 \alpha}(\rr d)$ and \cite[Corollary~7.10]{CRW}
we obtain
$w = q_{\ep,\delta}^w u \in C( \ro , M_{\sigma,s_0 + 2 \alpha}(\rr d) )$. 

This bootstrap argument shows that we may conclude
\begin{equation*}
q_{\ep',\delta'}^w u \in C( \ro , M_{\sigma,s_0 + k \alpha}(\rr d) )
\end{equation*}
for any $k \in \no$, in each step decreasing $\ep'$ and $\delta'$ such that $0 < \ep' < \delta' \leqs \ep$
where $\ep$ has been chosen in the preceding step. 
We may do this keeping $\ep' > \delta_0 > 0$ for a fixed $\delta_0 > 0$. 
If $0 < \ep_0 < \delta_0$ then for any $t \in \ro$
\begin{equation*}
q_{\ep_0,\delta_0}^w u(t,\cdot) \in \bigcap_{s \in \ro} M_{\sigma,s}(\rr d) = \cS(\rr d). 
\end{equation*}
By \cite[Proposition~3.5]{Wahlberg1} this shows that
$z_0 \notin \WFgs (u(t, \cdot))$. 
Thus we have shown $\WFgs (u(t,\cdot)) \subseteq \WFgs (u_0)$ for any $t \in \ro$, 
and the reverse inclusion is immediate since the solution operator 
is invertible and $\cK_t^{-1} = \cK_{-t}$. 
We have proved \eqref{eq:propsubcritical}. 
\qed

\section{The filter of singularities}\label{sec:filtersing}

Let 
\begin{equation}\label{eq:Sigmacondition}
\Sigma \subseteq \ro_+ \setminus (0,1],
\end{equation}
let $k,m \in \no \setminus 0$ and let $\sigma = \frac{k}{m}$. 
We define the anisotropic annular region $\wt \Sigma$ with base $\Sigma$ as
\begin{equation}\label{eq:annulardef}
\wt \Sigma =  \{(x,\xi) \in \rr {2d}: |x|^{2k} + |\xi|^{2m}  \in \Sigma \} 
\subseteq \rr {2d}. 
\end{equation}
Thus \eqref{eq:Sigmacondition} implies $\wt \Sigma \subseteq \rr {2d} \setminus \rB_{k,m}$
where 
\begin{equation*}
\rB_{k,m} =  \{(x,\xi) \in \rr {2d}: |x|^{2k} + |\xi|^{2m}  \leqs 1 \} 
\subseteq \rr {2d}.
\end{equation*}

Given $\wt \Sigma \subseteq \rr {2d}$ we say that $a \in G^{r,\sigma}$ is elliptic in $\wt \Sigma$ if 
\begin{equation}\label{eq:ellipticLambda}
|a(x,\xi)| \geqs C \, \theta_\sigma (x,\xi)^r \asymp \left( 1 + |x|^{2k} + |\xi|^{2m} \right)^{\frac{r}{2 k}}, 
\qquad (x,\xi) \in \wt \Sigma \setminus \rB_R,
\end{equation}
for some $C, R >0$ independent of $(x,\xi) \in \wt \Sigma$. 

For a given subset $\Sigma \subseteq \ro$ we define for $\ep > 0$
\begin{equation}\label{eq:LambdaEp}
\Sigma_\ep = \bigcup_{y \in \Sigma} \rB_{\ep |y|} (y). 
\end{equation}
Thus if $x \in \ro \setminus \Sigma_\ep$ then $|x - y| \geqs \ep |y|$ for all $y \in \Sigma$. 

Next we show that if $a \in G^{r,\sigma}$ is elliptic in the annular set $\wt \Sigma \subseteq \rr {2d}$
defined by \eqref{eq:Sigmacondition} and \eqref{eq:annulardef}
then it is automatically elliptic in the larger set $\wt \Sigma_\rho = \wt{(\Sigma_\rho)} \subseteq \rr {2d}$ for some $\rho > 0$, 
where 
\begin{equation*}
\wt \Sigma_\rho =  \{(x,\xi) \in \rr {2d}: |x|^{2k} + |\xi|^{2m} \in \Sigma_\rho \} 
\subseteq \rr {2d}.
\end{equation*}

\begin{proposition}\label{prop:ellipticlarger}
Let $a \in G^{r,\sigma}$ and suppose that $a$ is elliptic in an annular set $\wt \Sigma \subseteq \rr {2d}$
defined by \eqref{eq:Sigmacondition} and \eqref{eq:annulardef}. 
Then there exists $\rho > 0$ such that $a$ is elliptic in $\wt \Sigma_\rho$. 
\end{proposition}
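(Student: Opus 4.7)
The plan is to show that if $(x,\xi) \in \wt\Sigma_\rho$ lies sufficiently far from the origin, we can deform it by a small anisotropic scaling to a point $(x',\xi') \in \wt\Sigma$ and then transfer the ellipticity bound from $(x',\xi')$ back to $(x,\xi)$ via a first order Taylor estimate controlled by the $G^{r,\sigma}$ symbol bounds \eqref{eq:anisosymbolest}.

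Concretely, given $(x,\xi) \in \wt\Sigma_\rho$ with $s := |x|^{2k}+|\xi|^{2m} \in \Sigma_\rho$, choose $y \in \Sigma$ with $|s-y| < \rho y$. I would set
\begin{equation*}
\lambda = \left( \frac{y}{s} \right)^{\frac{1}{2k}}, \qquad x' = \lambda x, \qquad \xi' = \lambda^\sigma \xi,
\end{equation*}
so that
\begin{equation*}
|x'|^{2k} + |\xi'|^{2m} = \lambda^{2k}(|x|^{2k} + |\xi|^{2m}) = y \in \Sigma,
\end{equation*}
i.e. $(x',\xi') \in \wt\Sigma$. Since $\Sigma \subseteq [1,\infty)$, one has $s \geqs (1-\rho)y \geqs 1-\rho > 0$ for $\rho < 1$, so $\lambda$ is well defined and, by $y/s \in ((1+\rho)^{-1}, (1-\rho)^{-1})$, both $|\lambda-1|$ and $|\lambda^\sigma - 1|$ are $O(\rho)$ uniformly as $\rho \to 0$.

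Next I would estimate $|a(x',\xi') - a(x,\xi)|$ along the straight segment $\gamma(t) = ((1-t)x + tx', (1-t)\xi + t\xi')$, $t \in [0,1]$. By construction $|x(t)| = |1 + t(\lambda-1)|\,|x|$ and $|\xi(t)| = |1 + t(\lambda^\sigma-1)|\,|\xi|$, so for $\rho$ small these are comparable to $|x|$ and $|\xi|$ respectively, and hence
\begin{equation*}
\theta_\sigma(\gamma(t)) \asymp \theta_\sigma(x,\xi), \qquad t \in [0,1].
\end{equation*}
Combining the fundamental theorem of calculus with the $G^{r,\sigma}$ bounds
\begin{equation*}
|\nabla_x a(\gamma(t))| \lesssim \theta_\sigma(\gamma(t))^{r-1}, \qquad |\nabla_\xi a(\gamma(t))| \lesssim \theta_\sigma(\gamma(t))^{r-\sigma},
\end{equation*}
with $|x'-x| = |\lambda-1|\,|x| \lesssim \rho\,\theta_\sigma(x,\xi)$ and $|\xi'-\xi| = |\lambda^\sigma-1|\,|\xi| \lesssim \rho\,\theta_\sigma(x,\xi)^\sigma$, gives
\begin{equation*}
|a(x',\xi') - a(x,\xi)| \lesssim \rho \, \theta_\sigma(x,\xi)^r.
\end{equation*}

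Finally, using ellipticity of $a$ on $\wt\Sigma \setminus \rB_R$, and the fact that $|(x',\xi')| \asymp |(x,\xi)|$ (so that $(x',\xi') \in \wt\Sigma \setminus \rB_R$ whenever $|(x,\xi)| \geqs R'$ for some $R'$), I obtain
\begin{equation*}
|a(x,\xi)| \geqs |a(x',\xi')| - |a(x,\xi) - a(x',\xi')| \geqs (C_1 - C_2 \rho)\, \theta_\sigma(x,\xi)^r,
\end{equation*}
with constants independent of $(x,\xi)$. Choosing $\rho > 0$ small enough that $C_1 - C_2 \rho > 0$ yields ellipticity of $a$ on $\wt\Sigma_\rho \setminus \rB_{R'}$. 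The main technical point to handle carefully is the uniform comparability $\theta_\sigma(\gamma(t)) \asymp \theta_\sigma(x,\xi)$ along the segment together with the anisotropic size estimates $|x| \lesssim \theta_\sigma(x,\xi)$ and $|\xi| \lesssim \theta_\sigma(x,\xi)^\sigma$, which exactly match the orders in the derivative bounds for $G^{r,\sigma}$ and make the anisotropic scaling compatible with the symbol class.
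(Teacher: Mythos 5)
Your proof is correct and takes essentially the same approach as the paper: deform $(x,\xi)\in\wt\Sigma_\rho$ by an anisotropic scaling onto $\wt\Sigma$, verify $\theta_\sigma$ is comparable along the segment joining the two points, and use the $G^{r,\sigma}$ derivative bounds together with the mean value theorem (you phrase it via the fundamental theorem of calculus, which is equivalent) to absorb the difference $|a(x,\xi)-a(x',\xi')|$ into the ellipticity constant for $\rho$ small. The only cosmetic differences are your choice of $\lambda=(y/s)^{1/2k}$ (so $(\lambda,\lambda^\sigma)$ replaces the paper's $(\lambda^{1/2k},\lambda^{1/2m})$ with $\lambda=y/s$, which is the same scaling) and the fact that you make the bounds $|x|\leqs\theta_\sigma(x,\xi)$, $|\xi|\leqs\theta_\sigma(x,\xi)^\sigma$ explicit.
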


\begin{proof}
Let $0 < \rho < 1$. If $(x,\xi) \in \wt \Sigma_\rho$ then there exists $t \in \Sigma$ such that $| |x|^{2k} + |\xi|^{2m} - t | < \rho t$, 
and $|x|^{2k} + |\xi|^{2m} > 0$. 
Define $y = \lambda^{\frac1{2k}} x$ and $\eta = \lambda^{\frac1{2m}} \xi$ where 
\begin{equation*}
\lambda = \frac{t}{|x|^{2k} + |\xi|^{2m}} > 0. 
\end{equation*}
Then $|y|^{2k} + |\eta|^{2m} = \lambda \left(|x|^{2k} + |\xi|^{2m} \right) = t$ which gives 
\begin{equation*}
(1-\rho) \left( |y|^{2k} + |\eta|^{2m} \right) < |x|^{2k} + |\xi|^{2m}
< (1 + \rho) \left( |y|^{2k} + |\eta|^{2m} \right).
\end{equation*}
Thus $|y|^{2k} + |\eta|^{2m} \asymp |x|^{2k} + |\xi|^{2m}$ which implies $\theta_\sigma(x,\xi) \asymp \theta_\sigma(y,\eta)$. 
	
By the assumed ellipticity we have, since $(y,\eta) \in \wt \Sigma$, 
\begin{equation}\label{eq:elliptic1}
| a(y,\eta) | \geqs C_1 \, \theta_\sigma (y,\eta)^r \geqs C_2 \, \theta_\sigma (x,\xi)^r
\end{equation}
with $C_1, C_2 > 0$ if $|(x,\xi)| \geqs R$ for some $R > 0$. 
	
We have 
\begin{equation}\label{eq:lambdarho}
|\lambda - 1 | = \frac{ \left| t - \left( |x|^{2k} + |\xi|^{2m} \right)\right|}{|x|^{2k} + |\xi|^{2m}} 
< \frac{\rho t}{|x|^{2k} + |\xi|^{2m}} 
< \frac{\rho}{1-\rho}. 
\end{equation}
Since we will choose $\rho > 0$ small, $\lambda$ will be close to 1, so we may assume that 
\begin{equation*}
\frac12 \leqs \lambda^{\frac1{2k}} \leqs 2, \quad \frac12 \leqs \lambda^{\frac1{2m}} \leqs 2, 
\end{equation*}
which implies for any $0 \leqs \tau \leqs 1$
\begin{equation*}
\frac12 \leqs \tau + (1-\tau) \lambda^{\frac1{2k}} \leqs 2, \quad
\frac12 \leqs \tau + (1-\tau) \lambda^{\frac1{2m}} \leqs 2. 
\end{equation*}
In turn this gives 
\begin{equation*}
\theta_\sigma \left( \tau (x,\xi) + (1-\tau) (y,\eta) \right) 
\asymp \theta_\sigma (x,\xi). 
\end{equation*}

Combined with \eqref{eq:anisosymbolest} we get, using the mean value theorem, for some $0 \leqs \tau \leqs 1$, 
\begin{align*}
& \left| a(x,\xi) - a(y,\eta) \right| \\
& \leqs \left| \la \nabla_x a \big( \tau (x,\xi) + (1-\tau) (y,\eta) \big) , x - y \ra + \la \nabla_\xi a \big( \tau (x,\xi) + (1-\tau) (y,\eta) \big) , \xi - \eta \ra  \right| \\
& \leqs C \left| \lambda^{\frac1{2k}} - 1 \right| \theta_\sigma (x,\xi)^{r-1} |x|
+ C  \left| \lambda^{\frac1{2m}} - 1 \right|  \theta_\sigma (x,\xi)^{r-\sigma} | \xi | \\
& \leqs C \max \left( \left| \lambda^{\frac1{2k}} - 1 \right|, \left| \lambda^{\frac1{2m}} - 1 \right| \right) \theta_\sigma (x,\xi)^r
\end{align*}
where $C > 0$ depends on $a$ and $\rho$ only. 
Taking into account \eqref{eq:elliptic1} we obtain finally 
\begin{align*}
\left| a(x,\xi) \right| 
\geqs 
& \left| a(y,\eta) \right| -  \left| a(x,\xi) - a(y,\eta) \right| \\
& \geqs \left( C_2 - C \max \left( \left| \lambda^{\frac1{2k}} - 1 \right|, \left| \lambda^{\frac1{2m}} - 1 \right| \right) \right) \theta_\sigma (x,\xi)^r
\geqs \frac12 C_2 \theta_\sigma (x,\xi)^r
\end{align*}
provided $\rho > 0$ is sufficiently small, using \eqref{eq:lambdarho}, and provided $|(x,\xi)| \geqs R$. 
We have shown that $a$ is elliptic in $\wt \Sigma_\rho$. 
\end{proof}

In the sequel we develop a few results that allow us to use suitable cut-off symbols 
in the definition of ellipticity. 

\begin{lemma}\label{lem:separation}
Suppose $\Sigma \subseteq \ro_+$ satisfies \eqref{eq:Sigmacondition}, 
and suppose $0 < \ep < \delta < 1$. 
Then there exist $\mu > 0$ such that 
\begin{equation}\label{eq:separatednbh}
\left( \ro \setminus \Sigma_\delta \right)_\mu \cap \left( \Sigma_\ep \right)_\mu = \emptyset. 
\end{equation}
\end{lemma}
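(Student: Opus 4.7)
The plan is to sandwich the two sets via an intermediate level $\Sigma_{\delta'}$ with $\ep < \delta' < \delta$: for $\mu$ small I aim to show that $(\Sigma_\ep)_\mu \subseteq \Sigma_{\delta'}$ while $\Sigma_{\delta'}$ remains disjoint from $(\ro \setminus \Sigma_\delta)_\mu$. The reason this should work is that both the fattening from $\ep$ to $\delta'$ and the gap $\delta - \delta'$ scale linearly with $\mu$, so by picking, say, $\delta' = (\ep+\delta)/2$ and $\mu$ small enough (depending only on $\ep,\delta$), both estimates will hold simultaneously.

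For the first containment, if $x \in (\Sigma_\ep)_\mu$ then there exist $y \in \Sigma_\ep$ with $|x-y| < \mu |y|$, and $s \in \Sigma$ with $|y - s| < \ep |s|$. The triangle inequality first yields $|y| < (1+\ep)|s|$ and then
\[
|x - s| < \mu |y| + \ep |s| < \bigl( \ep + \mu(1+\ep) \bigr) |s|,
\]
so $x \in \Sigma_{\ep + \mu(1+\ep)}$. Whenever $\mu$ is small enough that $\ep + \mu(1+\ep) < \delta'$, this gives the inclusion $(\Sigma_\ep)_\mu \subseteq \Sigma_{\delta'}$.

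For the second disjointness, I would argue by contradiction: suppose $z$ lies in both $\Sigma_{\delta'}$ and $(\ro \setminus \Sigma_\delta)_\mu$, so that $z \in \rB_{\delta'|s|}(s)$ for some $s \in \Sigma$ and $z \in \rB_{\mu|w|}(w)$ for some $w \in \ro \setminus \Sigma_\delta$. Since $\Sigma \subseteq (1,\infty)$ and $\delta' < 1$ we have $s > 1$ and $z > (1-\delta')s > 0$, which in turn forces $w > 0$ (any $w \leqs 0$ would make $\rB_{\mu|w|}(w)$ either empty or entirely non-positive). The triangle inequality then gives $|w|(1-\mu) < (1+\delta')|s|$, from which
\[
|w - s| < \left( \frac{\mu(1+\delta')}{1-\mu} + \delta' \right) |s|.
\]
As $\mu \to 0^+$ the parenthesized coefficient tends to $\delta' < \delta$, so for $\mu$ small enough it is strictly less than $\delta$, placing $w \in \Sigma_\delta$ and contradicting the choice of $w$.

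Combining the two steps, a common $\mu > 0$ satisfying both smallness conditions yields \eqref{eq:separatednbh}. The underlying geometry -- multiplicative neighborhoods of a subset of the positive half-line bounded away from the origin -- is essentially one-dimensional, so the only real obstacle is the careful tracking of constants and the initial observation that one can reduce to $w > 0$; no additional analytic input beyond the triangle inequality is needed.
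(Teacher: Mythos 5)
Your proof is correct and follows essentially the same strategy as the paper: pick an intermediate level $\delta'$ (the paper calls it $\gamma$) strictly between $\ep$ and $\delta$ and show, via triangle-inequality estimates in the multiplicative neighborhoods, that for $\mu$ small enough $(\Sigma_\ep)_\mu \subseteq \Sigma_{\delta'}$ while $(\ro\setminus\Sigma_\delta)_\mu$ is disjoint from $\Sigma_{\delta'}$. The only difference is cosmetic: the paper establishes the second fact as the inclusion $(\ro\setminus\Sigma_\delta)_\mu \subseteq \ro\setminus\Sigma_{\gamma}$ via a two-case direct estimate, whereas you argue by contradiction (and insert a correct, though not strictly needed, reduction to $w>0$), but both come down to the same elementary manipulations.
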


\begin{proof}
Let $\ep < \gamma < \delta$ and set 
\begin{equation}\label{eq:mudef}
\mu = \min \left( \frac{\delta-\gamma}{1 + \delta}, \frac{\gamma-\ep}{1 + \ep} \right) > 0. 
\end{equation}
Then we have on the one hand
\begin{equation}\label{eq:Sigmainclusion1}
\left( \Sigma_\ep \right)_\mu \subseteq \Sigma_{\gamma}.
\end{equation}
Indeed if $x \in \left( \Sigma_\ep \right)_\mu$ then there exist $y \in \Sigma_\ep$ and $z \in \Sigma$
such that $|x - y| < \mu |y|$ and $|y - z| < \ep z$. 
This gives
\begin{equation*}
\frac{|x-z|}{z}
\leqs \frac{|y-z| + |x-y|}{z}
< \ep + \mu \frac{|y|}{z}
\leqs \ep + \mu \frac{z + |y-z|}{z}
< \ep + \mu (1 + \ep) \leqs \gamma,
\end{equation*}
in the last inequality using $\mu \leqs \frac{\gamma-\ep}{1 + \ep}$
which is due to \eqref{eq:mudef}.
This shows that $x \in \Sigma_\gamma$
which proves \eqref{eq:Sigmainclusion1}.
	
On the other hand we have
\begin{equation}\label{eq:Sigmainclusion2}
\left( \ro \setminus \Sigma_\delta \right)_\mu \subseteq \ro \setminus \Sigma_\gamma.
\end{equation}
To wit if $x \in \left( \ro \setminus \Sigma_\delta \right)_\mu$ then there exists $y \in \ro \setminus \Sigma_\delta$ 
such that $|x - y| < \mu |y|$, 
and $|y - z| \geqs \delta z$ for all $z \in \Sigma$.
	
Let $z \in \Sigma$. 
If $\frac{|y|}{z} \leqs \frac{\delta-\gamma}{\mu}$ then 
\begin{equation*}
\frac{|x-z|}{z}
= \frac{|y-z - (y-x)|}{z}
> \delta - \mu \frac{|y|}{z}
\geqs \delta - \left( \delta-\gamma \right)
= \gamma, 
\end{equation*}
and if $\frac{|y|}{z} > \frac{\delta-\gamma}{\mu}$ then 
\begin{equation*}
\frac{|x-z|}{z}
\geqs \frac{|x|}{z} - 1
= \frac{|y + x- y|}{z} - 1
\geqs \frac{|y|}{z} (1 - \mu) - 1
> (\delta-\gamma) \left( \frac1{\mu} -1 \right) - 1
\geqs \gamma
\end{equation*}
since $\frac1{\mu} - 1 \geqs \frac{1 + \gamma}{\delta - \gamma}$
again due to \eqref{eq:mudef}. 
	
Thus we have $|x-z| \geqs \gamma z$ for all $z \in \Sigma$. 
It follows $x \in \ro \setminus \Sigma_\gamma$ so we have proved \eqref{eq:Sigmainclusion2}. 
Finally \eqref{eq:separatednbh} is a consequence of  
\eqref{eq:Sigmainclusion1} and \eqref{eq:Sigmainclusion2}. 
\end{proof}

\begin{lemma}\label{lem:symbolcutoff}
If $\Sigma \subseteq \ro_+$ satisfies \eqref{eq:Sigmacondition},
and $0 < \ep < \delta < 1$
then there exists $g_{\ep, \delta} \in C^\infty(\ro_+)$ such that 
$0 \leqs g_{\ep , \delta} \leqs 1$, 
$\supp g_{\ep , \delta} \subseteq \Sigma_\delta$, $g_{\ep, \delta}(x)=1$ when $x \in \Sigma_\ep$, and 
\begin{equation}\label{eq:symbolcutoff}
| g_{\ep, \delta}^{(n)} (x)| \leqs c_n (1+|x|)^{-n}, \quad x > 0, \quad n \in \no.
\end{equation}
The constants $c_n > 0$ depend on $\ep$ and $\delta$, but are independent of $x \in \ro_+$. 
\end{lemma}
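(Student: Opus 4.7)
The plan is to reduce the construction to the additive setting via the logarithm: in log-coordinates the multiplicative neighborhoods $\Sigma_\ep \subset \Sigma_\delta$ become sets with a uniform Euclidean gap, and a standard mollification then produces the cutoff.

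First I verify that $\log \Sigma_\ep$ and $\log(\ro_+ \setminus \Sigma_\delta)$ are separated by a positive Euclidean distance $d > 0$. This can be extracted from Lemma \ref{lem:separation} (the multiplicative gap $\mu$ yields $|\log a - \log b| \geqs -\log(1-\mu)$ whenever $a \in \Sigma_\ep$ and $b \in \ro_+ \setminus \Sigma_\delta$), or seen directly: writing $a = y(1+s)$ with $y \in \Sigma$, $|s| < \ep$, the condition $|b - y| \geqs \delta y$ splits into $b \geqs y(1+\delta)$ or $0 < b \leqs y(1-\delta)$, and in either case one bounds $|\log(a/b)|$ from below by $d := \min\!\bigl( \log\tfrac{1+\delta}{1+\ep},\, \log\tfrac{1-\ep}{1-\delta} \bigr) > 0$.

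Next, with a mollifier $\phi \in C_c^\infty(\ro)$ satisfying $\phi \geqs 0$, $\int \phi = 1$, $\supp \phi \subseteq (-d/4, d/4)$, I set $E_1 := \{y \in \ro : \operatorname{dist}(y, \log\Sigma_\ep) \leqs d/4\}$ and $\tilde g := \1_{E_1} * \phi$. Then $\tilde g \in C^\infty(\ro)$ with $\|\tilde g^{(n)}\|_\infty \leqs \|\phi^{(n)}\|_{L^1}$, $\tilde g \equiv 1$ on $\log\Sigma_\ep$ (any translate of $\supp \phi$ by a point of $\log \Sigma_\ep$ sits inside $E_1$), and $\supp \tilde g \subseteq \{y : \operatorname{dist}(y, \log\Sigma_\ep) \leqs d/2\} \subseteq \log\Sigma_\delta$ by the $d$-separation. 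I then define $g_{\ep,\delta}(x) := \tilde g(\log x)$ for $x > 0$; the cutoff properties $0 \leqs g_{\ep,\delta} \leqs 1$, $g_{\ep,\delta}|_{\Sigma_\ep} = 1$ and $\supp g_{\ep,\delta} \subseteq \Sigma_\delta$ follow immediately from those of $\tilde g$ via the bijection $\log\colon \ro_+ \to \ro$.

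Finally, for the symbol estimates \eqref{eq:symbolcutoff}, I apply Faà di Bruno's formula \eqref{eq:faadibruno} to the composition $\tilde g \circ \log$. Since the $j$-th derivative of $\log$ is $(-1)^{j-1}(j-1)!/x^j$ and the constraint $\sum_j j m_j = n$ forces the total power of $x$ to be $x^{-n}$, one obtains $g_{\ep,\delta}^{(n)}(x) = x^{-n} \sum_{k=1}^{n} \alpha_{n,k}\, \tilde g^{(k)}(\log x)$ with universal constants $\alpha_{n,k}$, hence $|g_{\ep,\delta}^{(n)}(x)| \leqs C_n\, x^{-n}$ for all $x > 0$. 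Since $\supp g_{\ep,\delta} \subseteq \Sigma_\delta \subseteq (1-\delta, \infty)$, on the support $x^{-n}$ is comparable to $(1+x)^{-n}$ through $(1+x)/x \leqs (2-\delta)/(1-\delta)$, yielding \eqref{eq:symbolcutoff} with $c_n = C_n\bigl[(2-\delta)/(1-\delta)\bigr]^n$. No serious obstacle is expected: the only mild subtlety is that $\tilde g$ vanishes identically to the left of $\log(1-\delta)$ and may become positive arbitrarily close from the right, but since it is a convolution it is smooth and flat to infinite order there, so $g_{\ep,\delta}$ is genuinely $C^\infty$ on $\ro_+$. The essential insight is that the logarithm trivializes the multiplicative scaling of $\Sigma_\ep$ and $\Sigma_\delta$ into a fixed additive gap; after that the construction is routine mollification.
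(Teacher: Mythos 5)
Your proof is correct, and it realizes the same underlying idea as the paper — scale-invariant (multiplicative) mollification — through a different and arguably cleaner technical route. The paper mollifies directly in $\ro_+$ with a kernel of width proportional to $x$, namely $g(x) = \frac{1}{\mu x}\int_{(\Sigma_\ep)_\mu}\varphi\bigl(\frac{x-y}{\mu x}\bigr)\,\dd y$, invoking Lemma \ref{lem:separation} for the parameter $\mu$ and then controlling $g^{(n)}$ via Leibniz and Fa\`a di Bruno applied to $\varphi\bigl(\frac{1}{\mu} - \frac{y}{\mu x}\bigr)$, where the $x^{-n}$ decay has to be traced through each factor. You instead pass to $\log$-coordinates, where the multiplicative neighborhoods $\Sigma_\ep \subset \Sigma_\delta$ acquire a uniform additive gap $d > 0$, so a single \emph{fixed}-width mollification suffices; the entire $x$-dependence of the derivative estimates then falls out of one clean Fa\`a di Bruno computation for $\tilde g \circ \log$, since the constraint $\sum j m_j = n$ forces the homogeneous factor $x^{-n}$. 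Your separation bound $d = \min\!\bigl(\log\tfrac{1+\delta}{1+\ep},\log\tfrac{1-\ep}{1-\delta}\bigr)$ is also a direct, self-contained replacement for Lemma \ref{lem:separation} in this context. The final conversion of $x^{-n}$ to $(1+x)^{-n}$ is handled correctly by using $\Sigma_\delta \subseteq (1-\delta,\infty)$ on the support and the trivial vanishing off it. The two approaches buy essentially the same thing; yours isolates the scale invariance conceptually (the logarithm trivializes it), while the paper's stays in the original variable, which matches the form of the neighborhoods $\Sigma_\ep$ as they later appear in Proposition \ref{prop:qsymbol}.
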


\begin{proof}
By Lemma \ref{lem:separation} there exists $0 < \mu < 2$
such that \eqref{eq:separatednbh} is valid. 
Let $\fy \in C_c^\infty(\ro)$ satisfy $\fy \geqs 0$, $\supp \fy \subseteq [-\frac12,\frac12]$ and $\int \fy \, \dd x = 1$.  
Define $g_{\ep, \delta} = g$ with 
\begin{equation*}
g(x) = \frac1{\mu x} \int_{\left( \Sigma_\ep \right)_\mu} \fy \left( \frac{x-y}{\mu x} \right) \, \dd y \in C^\infty(\ro_+).
\end{equation*}
Then $g  (x) \geqs 0$ for all $x > 0$, and 
\begin{equation*}
g(x) 
= \frac1{\mu x} \int_{\left( \Sigma_\ep \right)_\mu} \fy \left( \frac{x-y}{\mu x} \right) \, \dd y 
\leqs \int_\ro \fy (y) \, \dd y = 1 
\end{equation*}
for all $x > 0$. 

Let $x \in \ro_+ \setminus \Sigma_\delta$. 
Then $\supp \fy \left( \frac{x-\cdot}{\mu x} \right) \subseteq \left( \ro \setminus \Sigma_\delta \right)_\mu$
so it follows from \eqref{eq:separatednbh} that $g(x) = 0$. Thus $\supp g \subseteq \Sigma_\delta$. 
Next take $x \in \Sigma_\ep$. Then $\supp \fy \left( \frac{x-\cdot}{\mu x} \right) \subseteq \left( \Sigma_\ep \right)_\mu$ 
which implies
\begin{equation*}
g(x) 
= \frac1{\mu x} \int_\ro \fy \left( \frac{x-y}{\mu x} \right) \, \dd y 
= \int_\ro \fy (y) \, \dd y = 1. 
\end{equation*}
It follows that $g\big|_{\Sigma_\ep} \equiv 1$. 
	
It remains to prove the estimates for the derivatives \eqref{eq:symbolcutoff}. 
From 
\begin{equation*}
\frac{\dd^j}{\dd x^j}\left( \frac{1}{\mu x} \right)
= \frac1\mu (-1)^j j! x^{-1-j}, 
\end{equation*}
Fa\`a di Bruno's formula \eqref{eq:faadibruno}, 
and the fact that $(1-\mu/2) x \leqs y \leqs (1+\mu/2) x$ when $y \in \supp \fy \left( \frac{x-\cdot}{\mu x} \right)$
we first deduce the estimate when $x > 0$
\begin{equation}\label{eq:phiderivative}
\begin{aligned}
\left| \frac{\dd^k}{\dd x^k}\left( \fy \left( \frac1\mu -\frac{y}{\mu x} \right) \right) \right|
& \leqs C_k x^{-k} 
\sum_{m_1 + 2 m_2 + \cdots + k m_k = k} 
\left| \fy^{(m_1 + \cdots + m_k)} \left( \frac1\mu -\frac{y}{\mu x} \right) \right|
\left( \frac{y}{x} \right)^{m_1 + \cdots + m_k} \\
& \leqs C_{k,\mu} x^{-k}.
\end{aligned}
\end{equation}

Combined with Leibniz' rule this yields for $x \geqs 1$
\begin{equation}\label{eq:gderivative}
\begin{aligned}
\left| D^n g(x) \right|
& = \left| \sum_{k \leqs n} \binom{n}{k} D^{n-k} \left( \frac1{\mu x} \right) \int_{\left( \Sigma_\ep \right)_\mu} 
D^{k} \left( \fy \left( \frac{x-y}{\mu x} \right) \right) \, \dd y \right| \\
& \leqs \sum_{k \leqs n} \binom{n}{k} \mu^{-1} (n-k)! x^{-1 - n + k} C_{k,\mu} x^{-k}  \left| \supp \fy \left( \frac{x-\cdot}{\mu x} \right) \right| \\
& \leqs C_{n,\mu} x^{-n} 
\end{aligned}
\end{equation}
which proves \eqref{eq:symbolcutoff}. 
\end{proof}

\begin{remark}\label{rem:extension}
The function $g_{\ep, \delta} \in C^\infty(\ro_+)$ may be extended by $g_{\ep, \delta}(x) = 0$ for $x \leqs 0$ into
$g_{\ep, \delta} \in C^\infty(\ro)$. 
Indeed obviously
\begin{equation}\label{eq:rapiddecay}
\sup_{x \in \ro} |x|^n \left| \fy^{(k)} (x) \right| \leqs C_{n,k} < \infty
\end{equation}
for any $n,k \in \no$. 
If $y \in \left( \Sigma_\ep \right)_\mu$ then $y \in \Sigma_\gamma$ by 
\eqref{eq:Sigmainclusion1} which 
together with \eqref{eq:Sigmacondition}
imply $y > 1 - \gamma > 0$. 
If $0 < x < (1-\gamma)/2$ then $|y - x | > (1-\gamma)/2$. 
Thus from \eqref{eq:phiderivative} and \eqref{eq:rapiddecay}
we obtain for any $n,k \in \no$
\begin{align*}
\left| \frac{\dd^k}{\dd x^k}\left( \fy \left( \frac{x-y}{\mu x} \right) \right) \right|
& \lesssim x^{-k} x^{n+k} |x-y|^{- n - k}
\lesssim x^n
\end{align*}
when $0 < x < (1-\gamma)/2$. 
By \eqref{eq:gderivative} this gives $\lim_{x \to 0^+} g_{\ep, \delta}^{(k)}(x) = 0$ for all $k \in \no$, and
finally \cite[Corollary~1.1.2]{Hormander2}
shows that $g_{\ep, \delta} \in C^\infty(\ro)$ and $g_{\ep, \delta}^{(k)} (0) = 0$ for all $k \in \no$. 
\end{remark}

Let $0 < \ep < \delta < 1$ and $k, m \in \no \setminus 0$. 
We define 
\begin{equation}\label{eq:qdef}
q_{\ep, \delta} (x,\xi)= g_{\ep, \delta}(|x|^{2k} + |\xi|^{2m}), \quad x, \xi \in \rr d
\end{equation}
where $g_{\ep, \delta} \in C^\infty(\ro)$ is defined as in Lemma \ref{lem:symbolcutoff}
and Remark \ref{rem:extension}. 
Then $q_{\ep, \delta} \in C^\infty(\rr {2d})$.  

\begin{proposition}\label{prop:qsymbol}
Let $0 < \ep <  \delta < 1$ and $k, m \in \no \setminus 0$, 
and let $\Sigma \subseteq \ro_+$ satisfy \eqref{eq:Sigmacondition}. 
Suppose $g_{\ep, \delta} \in C^\infty(\ro)$ is defined as in Lemma \ref{lem:symbolcutoff} and Remark \ref{rem:extension}, 
and define $q_{\ep, \delta} \in C^\infty(\rr {2d})$ by \eqref{eq:qdef}. 
Then 
$0 \leqs q_{\ep , \delta} \leqs 1$, 
$\supp q_{\ep, \delta} \subseteq \wt \Sigma_\delta$, $q_{\ep, \delta}\big|_{\wt \Sigma_\ep} \equiv 1$, 
and $q_{\ep ,\delta} \in G^{0,\sigma}$ with $\sigma = \frac{k}{m}$. 
\end{proposition}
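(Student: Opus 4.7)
The first three properties ($0 \leqs q_{\ep,\delta} \leqs 1$, the support inclusion, and $q_{\ep,\delta}|_{\wt\Sigma_\ep} \equiv 1$) are immediate from the definition $q_{\ep,\delta}(x,\xi) = g_{\ep,\delta}(f(x,\xi))$ with $f(x,\xi) = |x|^{2k} + |\xi|^{2m}$, together with the corresponding properties of $g_{\ep,\delta}$ from Lemma \ref{lem:symbolcutoff} and the identity $\wt\Sigma_\tau = f^{-1}(\Sigma_\tau)$. The main task is to establish the symbol estimate $q_{\ep,\delta} \in G^{0,\sigma}$, which I will do by combining Faà di Bruno's formula with two bookkeeping observations about $f$ and $\theta_\sigma$.

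The first observation is that $\theta_\sigma(x,\xi)^{2k} = (1 + |x| + |\xi|^{m/k})^{2k} \asymp 1 + |x|^{2k} + |\xi|^{2m} = 1 + f(x,\xi)$, so the estimate \eqref{eq:symbolcutoff} gives
\begin{equation*}
|g_{\ep,\delta}^{(j)}(f(x,\xi))| \lesssim (1+f(x,\xi))^{-j} \asymp \theta_\sigma(x,\xi)^{-2kj}
\end{equation*}
uniformly in $(x,\xi) \in \rr{2d}$ and $j \in \no$. The second observation is that $f$ is additively split in $x$ and $\xi$, so any mixed partial derivative $\pdd x {\gamma} \pdd \xi {\delta} f$ with both $\gamma \neq 0$ and $\delta \neq 0$ vanishes identically. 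Thus the only surviving terms in Faà di Bruno's formula \eqref{eq:faadibruno} (applied componentwise, as in the proof of Proposition \ref{prop:symbolcomposition}) arrange derivatives of $|x|^{2k}$ against $\alpha$ and derivatives of $|\xi|^{2m}$ against $\beta$ separately.

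Assume $|\alpha|+|\beta| \geqs 1$ (the case $\alpha = \beta = 0$ is trivial). A typical term in the expansion of $\pdd x \alpha \pdd \xi \beta q_{\ep,\delta}$ is of the form
\begin{equation*}
C \, g_{\ep,\delta}^{(j_1 + j_2)}(f(x,\xi)) \prod_{i=1}^{j_1} \pdd x {\gamma_i} |x|^{2k} \cdot \prod_{i=1}^{j_2} \pdd \xi {\delta_i} |\xi|^{2m}
\end{equation*}
with $\sum \gamma_i = \alpha$, $\sum \delta_i = \beta$, all $|\gamma_i|, |\delta_i| \geqs 1$. Since $|x|^{2k}$ and $|\xi|^{2m}$ are polynomials, $|\pdd x {\gamma_i}|x|^{2k}| \lesssim |x|^{2k - |\gamma_i|} \lesssim \theta_\sigma^{2k - |\gamma_i|}$ (with the convention that the derivative vanishes when $|\gamma_i| > 2k$), and similarly $|\pdd \xi {\delta_i}|\xi|^{2m}| \lesssim |\xi|^{2m - |\delta_i|} \lesssim \theta_\sigma^{\sigma(2m - |\delta_i|)} = \theta_\sigma^{2k - \sigma|\delta_i|}$. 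Multiplying these estimates gives a factor bounded by
\begin{equation*}
\theta_\sigma^{2k j_1 - |\alpha|} \cdot \theta_\sigma^{2k j_2 - \sigma|\beta|} = \theta_\sigma^{2k(j_1+j_2) - |\alpha| - \sigma|\beta|},
\end{equation*}
and combining with the bound on $g_{\ep,\delta}^{(j_1+j_2)}$ above yields the required estimate $|\pdd x \alpha \pdd \xi \beta q_{\ep,\delta}| \lesssim \theta_\sigma^{-|\alpha| - \sigma|\beta|}$.

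I do not expect a genuine obstacle; the only point that needs care is the matching between the scale $\theta_\sigma^{2k}$ on the one hand and the scale $1 + f$ used in Lemma \ref{lem:symbolcutoff} on the other. This is precisely what makes the anisotropic exponent $\sigma = k/m$ land in the right place, and it is crucial that the extension of $g_{\ep,\delta}$ by zero to $\ro$ (Remark \ref{rem:extension}) is $C^\infty$ with all derivatives vanishing at the origin, so that the estimate \eqref{eq:symbolcutoff} propagates to the whole region where $f$ may be small.
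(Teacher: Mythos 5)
Your proof is correct and follows the same broad strategy as the paper's: a chain-rule expansion of $\pdd x \alpha \pdd \xi \beta (g_{\ep,\delta}\circ f)$ where $f(x,\xi)=|x|^{2k}+|\xi|^{2m}$, combined with the bound \eqref{eq:symbolcutoff} on the derivatives of $g_{\ep,\delta}$ and the scale-matching $\theta_\sigma(x,\xi)^{2k}\asymp 1+f(x,\xi)$. The difference lies in how the polynomial factors are controlled. You observe that $f$ splits additively so every mixed $x$--$\xi$ partial of $f$ vanishes, making Fa\`a di Bruno factor cleanly into $x$-type and $\xi$-type chains, and you then invoke the homogeneity of $|x|^{2k}$ and $|\xi|^{2m}$ to get the degree-drop bounds $|\pdd x {\gamma_i}|x|^{2k}|\lesssim |x|^{2k-|\gamma_i|}\lesssim\theta_\sigma^{2k-|\gamma_i|}$ and $|\pdd \xi {\delta_i}|\xi|^{2m}|\lesssim\theta_\sigma^{2k-\sigma|\delta_i|}$ directly. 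The paper instead establishes the expansion \eqref{eq:qderivative} by an ad hoc induction, tracking polynomial coefficients $p_{\gamma,\kappa}$ of orders $|\gamma|$ in $x$ and $|\kappa|$ in $\xi$ together with the index constraint $\frac{|\gamma+\alpha|}{2k}+\frac{|\kappa+\beta|}{2m}\leqs n$, and then bounds $|p_{\gamma,\kappa}|$ by a three-way case analysis on the relative sizes of $|x|$, $|\xi|$, and $1$. Your homogeneity argument eliminates that case distinction and packages the index bookkeeping more transparently; both routes arrive at the same final estimate $\theta_\sigma^{-|\alpha|-\sigma|\beta|}$.
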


\begin{proof}
The first three conclusions $0 \leqs q_{\ep , \delta} \leqs 1$, 
$\supp q_{\ep, \delta} \subseteq \wt \Sigma_\delta$ and $q_{\ep, \delta}\big|_{\wt \Sigma_\ep} \equiv 1$
are consequences of \eqref{eq:annulardef} and Lemma \ref{lem:symbolcutoff}. 
	
It remains to show $q = q_{\ep ,\delta} \in G^{0,\sigma}$. An induction argument with respect to $|\alpha + \beta|$ shows that 
for all $\alpha, \beta \in \nn d$ we have
\begin{equation}\label{eq:qderivative}
\pdd x \alpha \pdd \xi \beta q (x,\xi) = \sum_{\frac{|\gamma+\alpha|}{2k} + \frac{|\kappa+\beta|}{2m} \leqs n \leqs |\alpha+\beta|} g_{\ep, \delta}^{(n)} (|x|^{2k} + |\xi|^{2m}) p_{\gamma, \kappa}(x,\xi)
\end{equation}
where $p_{\gamma, \kappa}$ are polynomials of orders $|\gamma|$ and $|\kappa|$ with respect to the variables $x$ and $\xi$, 
respectively, for $\gamma, \kappa \in \nn d$. 
	
For the indices in the sum we thus have 
\begin{equation*}
\max \left( \frac{|\gamma|}{2k},  \frac{|\kappa|}{2m} \right)
\leqs \frac{|\gamma|}{2k} + \frac{|\kappa|}{2m}
\leqs n - \frac{|\alpha|}{2k} - \frac{|\beta|}{2m}. 
\end{equation*}
If $|x| \leqs 1 \leqs |\xi|$ we obtain
\begin{equation*}
|p_{\gamma, \kappa}(x,\xi)|
\lesssim |\xi|^{|\kappa|}
\leqs \left( 1 + |x|^{2k} + |\xi|^{2m} \right)^{n - \frac{|\alpha|}{2k} - \frac{|\beta|}{2m}}, 
\end{equation*}
if $|\xi| \leqs 1 \leqs |x|$ we have 
\begin{equation*}
|p_{\gamma, \kappa}(x,\xi)|
\lesssim |x|^{|\gamma|}
\leqs \left( 1 + |x|^{2k} + |\xi|^{2m} \right)^{n - \frac{|\alpha|}{2k} - \frac{|\beta|}{2m}}, 
\end{equation*}
and if $\min (|\xi|, |x|) \geqs 1$ then  
\begin{equation*}
|p_{\gamma, \kappa}(x,\xi)|
\lesssim |x|^{|\gamma|} |\xi|^{|\kappa|}
\leqs \max \left( |x|^{2k}, |\xi|^{2m} \right)^{\frac{|\gamma|}{2k} + \frac{|\kappa|}{2m}}
\leqs \left( 1 + |x|^{2k} + |\xi|^{2m} \right)^{n - \frac{|\alpha|}{2k} - \frac{|\beta|}{2m}}. 
\end{equation*}

Combining this with \eqref{eq:qderivative} and Lemma \ref{lem:symbolcutoff} yields finally
\begin{equation*}
\left| \pdd x \alpha \pdd \xi \beta q (x,\xi) \right|
\lesssim \left( 1 + |x|^{2k} + |\xi|^{2m} \right)^{-n + n - \frac{|\alpha|}{2k} - \frac{|\beta|}{2m}}
\asymp \theta_\sigma(x,\xi)^{- |\alpha| - \sigma |\beta|}
\end{equation*}
which proves $q \in G^{0,\sigma}$.
\end{proof}

Next we define the concept of smoothness of a tempered distribution in an anisotropic annular subset of phase space $T^* \rr d$. 

\begin{definition}\label{def:smoothannular}
Let $k,m \in \no \setminus 0$, $\sigma = \frac{k}{m}$, and let $\wt \Sigma \subseteq \rr {2d}$ be the anisotropic annular region defined by \eqref{eq:annulardef}
where $\Sigma \subseteq [0,+ \infty)$.
The distribution $u \in \cS'(\rr d)$ is smooth in $\wt \Sigma$ if there exists $r \in \ro$ and $q \in G^{r,\sigma}$, elliptic in $\wt \Sigma$, such that $q^w(x,D) u \in \cS(\rr d)$.
\end{definition}

\begin{remark}\label{rem:smooth}
Two observations:
\begin{enumerate}
\item
If 
$\Sigma \subseteq [0,+ \infty)$ 
and $\sup \Sigma < \infty$ 
then there exists $r > 0$ such that 
$\wt \Sigma \subseteq \rB_r \subseteq \rr {2d}$. 
Let $a \in C_c^\infty(\rr {2d})$ be a cutoff function such that $\supp a \subseteq \rB_{2 r}$
and $a \big|_{\rB_r} \equiv 1$. 
Then $a \in G^{0,\sigma}$, $a^w(x,D) u \in \cS(\rr d)$ for any $u \in \cS'(\rr d)$ and $a$ is elliptic on $\wt \Sigma$. 
It follows that any $u \in \cS'(\rr d)$ is smooth in $\wt \Sigma$ defined by \eqref{eq:annulardef} for any 
$\Sigma \subseteq [0,+ \infty)$
such that $\sup \Sigma < \infty$. 
		
\item
If $u \in \cS(\rr d)$ and $a = 1 \in G^{0,\sigma}$ then $a^w(x,D) u = u \in \cS(\rr d)$ and $a$ is elliptic on $\wt \Sigma$ defined by 
\eqref{eq:annulardef} for any $\Sigma \subseteq [0,+ \infty)$. 
Thus $u \in \cS(\rr d)$ is smooth on all such $\wt \Sigma \subseteq \rr {2d}$. 
\end{enumerate}
\end{remark}

The following result says that smoothness in an annular set can always be defined using a symbol 
as in Proposition \ref{prop:qsymbol}.

\begin{proposition}\label{prop:smoothannular}
Let $\Sigma \subseteq \ro$ satisfy \eqref{eq:Sigmacondition}, and suppose $k, m \in \no \setminus 0$ and $\sigma = \frac{k}{m}$.  
A distribution $u \in \cS'(\rr d)$ is smooth in the annular region $\wt \Sigma$ defined by \eqref{eq:annulardef} if and only if 
there exists $0 < \delta < 1$ such that for any $0 < \ep < \delta$
we have $q_{\ep ,\delta}^w(x,D)u \in \cS(\rr d)$
where $q_{\ep, \delta} \in G^{0,\sigma}$ is defined by \eqref{eq:qdef}, Lemma \ref{lem:symbolcutoff} and Remark \ref{rem:extension}, 
and $q_{\ep ,\delta}$ is elliptic on $\wt \Sigma_\ep$. 
\end{proposition}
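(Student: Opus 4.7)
The plan is to treat the two implications of the equivalence separately. The sufficiency direction is almost immediate: if $q_{\ep,\delta}^w u \in \cS(\rr d)$ for some $0 < \delta < 1$ and all $0 < \ep < \delta$, then Proposition \ref{prop:qsymbol} gives $q_{\ep,\delta} \in G^{0,\sigma}$ and $q_{\ep,\delta} \equiv 1$ on $\wt\Sigma_\ep \supseteq \wt\Sigma$, so $|q_{\ep,\delta}(x,\xi)| \geqs 1 = \theta_\sigma(x,\xi)^0$ on $\wt\Sigma$; that is, $q_{\ep,\delta}$ is elliptic of order $r = 0$ on $\wt\Sigma$ in the sense of \eqref{eq:ellipticLambda}, and Definition \ref{def:smoothannular} applies with this choice of $(q,r)$.

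For the necessity direction, I would start from an elliptic symbol $q \in G^{r,\sigma}$ on $\wt\Sigma$ with $q^w u \in \cS(\rr d)$ and invoke Proposition \ref{prop:ellipticlarger} to enlarge the ellipticity region from $\wt\Sigma$ to $\wt\Sigma_\rho$ for some $\rho > 0$. Setting $\delta := \tfrac12 \min(\rho,1)$, every $0 < \ep < \delta$ yields a cutoff $q_{\ep,\delta}$ whose support sits inside $\wt\Sigma_\delta \subseteq \wt\Sigma_\rho$, i.e.\ inside the region where $q$ is elliptic. Moreover the hypothesis $\Sigma \subseteq \ro_+ \setminus (0,1]$ together with $\delta < 1$ forces $\inf\{|x|^{2k}+|\xi|^{2m} : (x,\xi) \in \wt\Sigma_\delta\} \geqs 1-\delta > 0$, so $\supp q_{\ep,\delta}$ stays away from the origin as well.

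The central step is the construction of a left parametrix of $q^w$ relative to $q_{\ep,\delta}$ in the anisotropic Shubin calculus of \cite{RW2}. Pick an anisotropic cutoff $\chi \in G^{0,\sigma}$ which vanishes on $\rB_R$ (where $R$ is the ellipticity threshold) and equals $1$ off $\rB_{2R}$, and set
\begin{equation*}
b_0(x,\xi) = \chi(x,\xi)\,\frac{q_{\ep,\delta}(x,\xi)}{q(x,\xi)}.
\end{equation*}
The bound $|q| \geqs C \theta_\sigma^r$ on $\wt\Sigma_\rho \setminus \rB_R$, combined with $q_{\ep,\delta} \in G^{0,\sigma}$ from Proposition \ref{prop:qsymbol}, gives $b_0 \in G^{-r,\sigma}$; by design $b_0\,q - q_{\ep,\delta}$ is compactly supported, hence Schwartz. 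The Weyl-product asymptotic expansion from Section \ref{sec:prelim} then produces $b_0 \wpr q - q_{\ep,\delta} \in G^{-(1+\sigma),\sigma}$, and the usual iterative correction together with the Borel-type asymptotic summation \cite[Lemma~3.2]{RW2} yields $b \sim \sum_{j\geqs 0} b_j \in G^{-r,\sigma}$, with $b_j \in G^{-r - j(1+\sigma),\sigma}$, such that $b \wpr q = q_{\ep,\delta} + s$ for some $s \in \cS(\rr{2d})$.

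With this parametrix at hand the conclusion is short: one has $q_{\ep,\delta}^w u = b^w q^w u + s^w u$, in which $b^w q^w u \in \cS(\rr d)$ because $q^w u \in \cS(\rr d)$ and $b^w$ preserves $\cS(\rr d)$ by Theorem \ref{thm:PseudoShubinSobolev} and \eqref{eq:schwartzmodsp}, while $s^w u \in \cS(\rr d)$ because $s \in \cS(\rr{2d})$ makes $s^w : \cS'(\rr d) \to \cS(\rr d)$ regularizing. The step I expect to be the main obstacle is verifying that $b_0$ and its iterates really belong to $G^{-r - j(1+\sigma),\sigma}$ with the correct $\sigma$-anisotropic derivative estimates; this requires combining the symbol estimates for $q_{\ep,\delta}$ in Proposition \ref{prop:qsymbol} with the lower bound for $q$ in the slightly enlarged set $\wt\Sigma_\rho$ supplied by Proposition \ref{prop:ellipticlarger}, after which the rest of the argument is essentially a translation of the classical elliptic parametrix construction to the anisotropic setting.
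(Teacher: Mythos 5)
Your argument is correct in outline, but it follows a different route from the paper. You build a microlocal left parametrix of the given elliptic symbol $q$ relative to the cutoff $q_{\ep,\delta}$: you divide $q_{\ep,\delta}$ by $q$ on the enlarged annulus furnished by Proposition \ref{prop:ellipticlarger}, cut off near the origin, and run the classical iterative correction plus asymptotic summation to get $b \in G^{-r,\sigma}$ with $b \wpr q = q_{\ep,\delta} + s$, $s \in \cS(\rr{2d})$. The paper instead avoids any local division: it forms $a_1 = \overline{a} \wpr a \in G^{2r,\sigma}$, patches it with the globally elliptic filler $(1-\chi)\,C_1 w_{k,m}^{2r/k}$ via a cutoff $\chi = q_{\gamma,\rho}$ to obtain a \emph{globally} elliptic $b \in G^{2r,\sigma}$, invokes the existing global parametrix lemma \cite[Lemma~6.3]{RW2} to produce $c$ with $c \wpr b = 1 + r$, and then observes that $q_{\ep,\delta} = q_{\ep,\delta}\wpr c\wpr a_1 + q_{\ep,\delta}\wpr c\wpr(b-a_1) - q_{\ep,\delta}\wpr r$, where the last two terms are Schwartz because of disjoint supports. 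The paper's approach buys reuse of an off-the-shelf global parametrix, at the cost of the patching and positivity argument; your approach is the more elementary classical parametrix iteration, but it requires you to check in detail that each iterate $b_j$ remains supported (modulo $\cS$) inside the region where $q$ is bounded below, and to pick the ball cutoff $\chi$ so that its support sits strictly inside the ellipticity region — the choice "vanishes on $\rB_R$, equals $1$ off $\rB_{2R}$" should be adjusted to, say, vanish on $\rB_{2R}$ and equal $1$ off $\rB_{3R}$, so that the quotient is well defined on the closed support of $\chi\,q_{\ep,\delta}$. Both proofs establish the nontrivial implication of the equivalence in the same way at the final step ($q_{\ep,\delta}^w u$ expressed through $q^w u$ plus a regularizing term), and your short treatment of the trivial direction is fine.
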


\begin{proof}
It suffices to show that smoothness in the annular region $\wt \Sigma$ of $u \in \cS'(\rr d)$ implies the stated condition. 
Thus there exists by assumption $r \in \ro$, $a \in G^{r, \sigma}$ and $C,R > 0$ such that $a^w u \in \cS$ and $|a (x,\xi)| \geqs C \theta_\sigma(x,\xi)^r$
when $(x,\xi) \in \wt \Sigma_\rho \setminus \rB_R$ for some $\rho > 0$, using Proposition \ref{prop:ellipticlarger}. 
	
We use the pseudodifferential calculus for the anisotropic Shubin symbols $G^{r, \sigma}$ \cite{RW2}. 
Defining $a_1 = \overline a \wpr a = |a|^2 + a_2 \in G^{2 r, \sigma}$ we thus have $a_2 \in G^{2 r-(1+\sigma), \sigma}$. 
	
Define $\chi = q_{\gamma, \rho}$ where $\gamma < \rho < 1$ by \eqref{eq:qdef}, Lemma \ref{lem:symbolcutoff} and Remark \ref{rem:extension}. 
Then $\chi \in G^{0, \sigma}$ by Proposition \ref{prop:qsymbol}. 
We set for $C_1 > 0$
\begin{equation*}
b = a_1 \chi + (1 - \chi) C_1 w_{k,m}^{\frac{2r}{k}}. 
\end{equation*}
By \cite[Lemma~3.6]{CRW} we have $b \in G^{2 r, \sigma}$. 
If $z \notin \wt \Sigma_\rho$ then $\chi(z) = 0$ so $b (z) = C_1 w_{k,m}(z)^{\frac{2 r}{k}} \asymp \theta_\sigma(z)^{2 r}$. 
If $z \in \wt \Sigma_\rho \setminus \rB_R$ then for some $C_2 = C_2(C_1) > 0$ and $C_3 > 0$
\begin{align*}
|b (z)| & = \left| a_1(z) \chi (z) + (1 - \chi(z)) C_1 w_{k,m}^{\frac{2 r}{k}}(z)  \right| \\
& \geqs \chi(z) |a(z)|^2 + (1 - \chi(z)) C_1 w_{k,m}^{\frac{2 r}{k}}(z) - \chi(z) |a_2(z)| \\
& \geqs \chi(z) C^2 \theta_\sigma(z)^{2 r} + (1-\chi(z)) C_2 \theta_\sigma(z)^{2 r} - C_3 \theta_\sigma(z)^{2 r -(1+\sigma)}. 
\end{align*}
We can pick $C_1 > 0$ such that $C_2 \leqs C^2$ which gives
\begin{align*}
|b (z)| & \geqs C_2 \theta_\sigma(z)^{2 r} \left( 1 - C_3 C_2^{-1} \theta_\sigma(z)^{-(1+\sigma)} \right) 
\geqs \frac12 C_2 \theta_\sigma(z)^{2 r}
\end{align*}
for $z \in \wt \Sigma_\rho \setminus \rB_R$
after possibly increasing $R > 0$. 

This argument shows that $b  \in G^{2 r, \sigma}$ is an elliptic symbol, and then \cite[Lemma~6.3]{RW2} implies that there exists a parametrix $c \in G^{-2 r, \sigma}$ that satisfies
$c \wpr b = 1 + r$ with $r \in \cS(\rr {2d})$. 
	
Let $\delta \leqs \gamma$ and $\ep < \delta$, and 
define $q = q_{\ep, \delta}$ by \eqref{eq:qdef}, Lemma \ref{lem:symbolcutoff} and Remark \ref{rem:extension}. 
Then $q \in G^{0, \sigma}$ by Proposition \ref{prop:qsymbol}. 
This gives 
\begin{equation*}
q = q \wpr c \wpr a_1 + q \wpr c \wpr (b - a_1) - q \wpr r
\end{equation*}
where $q \wpr r \in \cS(\rr {2d})$. 
Since $b(z) - a_1(z) = 0$ when $z \in \wt \Sigma_{\gamma}$ we have 
$\supp q \cap \supp (b - a_1) = \emptyset$. 
The calculus then implies $q \wpr c \wpr (b - a_1) \in \cS(\rr {2d})$. 
If we set $r_1 = q \wpr c \wpr (b - a_1) - q \wpr r \in \cS(\rr {2d})$,
the continuity of $b_1^w: \cS \to \cS$ for all $b_1 \in G^{t,\sigma}$ and $t \in \ro$ gives
\begin{align*}
q^w u 
& = q^w c^w  a_1^w u + r_1^w u \\
& = q^w c^w  {\overline a}^w a^w u + r_1^w u \in \cS
\end{align*}
due to $r_1 \in \cS(\rr {2d})$ and $a^w u \in \cS$. 
Finally we observe that $q$ is elliptic on $\wt \Sigma_\ep$ by construction. 
\end{proof}

The $\sigma$-anisotropic filter of singularities is defined as follows. 

\begin{definition}\label{def:filtersingularities}
Let $k,m \in \no \setminus 0$, $\sigma = \frac{k}{m}$ and let $u \in \cS'(\rr d)$. 
Consider anisotropic annular subsets $\wt \Sigma \subseteq \rr {2d}$ of the form \eqref{eq:annulardef}
where 
$\Sigma \subseteq [0,+ \infty)$. 
The collection of subsets
\begin{equation*}
\mathcal{F}(u) = \left\{\wt \Sigma \subseteq \rr {2d} : u \ \textrm{is smooth in} \ \rr {2d} \setminus \left( \wt \Sigma \cup \rB_{k,m} \right) \right\} \subseteq \rP(\rr {2d})
\end{equation*}
is called the $\sigma$-anisotropic filter of singularities of $u$.
\end{definition}

\begin{remark}\label{rem:filter}
From Remark \ref{rem:smooth} we may conclude: 
\begin{enumerate}

\item If $\sup \Sigma < \infty$ then $\rr {2d} \setminus \wt \Sigma = \wt{ \ro \setminus \Sigma} \in \mathcal{F}(u)$
for any $u \in \cS'(\rr d)$. In particular $\mathcal{F}(u) \neq \emptyset$ for all $u \in \cS'(\rr d)$. 
		
\item If $u \in \cS(\rr d)$ then $\wt \Sigma \in \mathcal{F}(u)$ for any 
$\wt \Sigma \subseteq \rr {2d}$ defined by \eqref{eq:annulardef}
where 
$\Sigma \subseteq [0,+ \infty)$. 
In particular $\emptyset \in \mathcal{F}(u)$. 
\end{enumerate}
\end{remark}

Remark \ref{rem:filter} (2) means that $\mathcal{F}(u)$ is extremely large when $u \in \cS(\rr d)$. 
The last observation extends to a characterization of $\cS(\rr d)$: 

\begin{lemma}\label{lem:charschwartz}
If $u \in \cS'(\rr d)$ then $\emptyset \in \mathcal{F}(u)$ if and only if $u \in \cS(\rr d)$. 
\end{lemma}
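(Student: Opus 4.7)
The plan is to prove the nontrivial implication $\emptyset \in \mathcal{F}(u) \Longrightarrow u \in \mathcal{S}(\rr d)$, since the converse is exactly Remark \ref{rem:filter}(2).

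First I would unpack what $\emptyset \in \mathcal{F}(u)$ means in the notation of Definition \ref{def:filtersingularities}. Writing $\emptyset = \wt\Sigma$ for $\Sigma = \emptyset \subseteq [0,+\infty)$, the condition says that $u$ is smooth in $\rr{2d} \setminus (\wt\Sigma \cup \rB_{k,m}) = \rr{2d} \setminus \rB_{k,m}$. This complement is itself of annular form \eqref{eq:annulardef}: taking $\Lambda = (1,+\infty) \subseteq \ro_+$ we have $\wt\Lambda = \rr{2d} \setminus \rB_{k,m}$, and $\Lambda$ trivially satisfies \eqref{eq:Sigmacondition}. So Definition \ref{def:smoothannular} applies and gives us some $r \in \ro$ and $a \in G^{r,\sigma}$ elliptic in $\wt\Lambda$ with $a^w(x,D) u \in \mathcal{S}(\rr d)$.

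Next I would observe that ellipticity of $a$ on $\wt\Lambda = \rr{2d} \setminus \rB_{k,m}$ as in \eqref{eq:ellipticLambda} means that there exist $C, R > 0$ with
\begin{equation*}
|a(x,\xi)| \geqs C \, \theta_\sigma(x,\xi)^r, \qquad (x,\xi) \in \rr{2d}, \quad |(x,\xi)| \geqs R',
\end{equation*}
where $R' = \max(R,1)$ suffices since outside a large ball we are automatically outside $\rB_{k,m}$. Thus $a$ is globally $\sigma$-anisotropically elliptic (outside a compact set), which is the natural hypothesis for the parametrix construction in the $G^{r,\sigma}$ calculus. By \cite[Lemma~6.3]{RW2} (already invoked in the proof of Proposition \ref{prop:smoothannular}) there exists $b \in G^{-r,\sigma}$ such that
\begin{equation*}
b \wpr a = 1 + r_0, \qquad r_0 \in \mathcal{S}(\rr{2d}).
\end{equation*}

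Finally I would apply this to $u$: since $a^w u \in \mathcal{S}(\rr d)$ and $b^w : \mathcal{S}(\rr d) \to \mathcal{S}(\rr d)$ is continuous (Theorem \ref{thm:PseudoShubinSobolev} applied across all $s$, or directly because symbols in $G^{t,\sigma}$ map $\mathcal{S}$ to $\mathcal{S}$), and since $r_0^w : \mathcal{S}'(\rr d) \to \mathcal{S}(\rr d)$ is regularizing (its kernel lies in $\mathcal{S}(\rr{2d})$), we obtain
\begin{equation*}
u = b^w a^w u - r_0^w u \in \mathcal{S}(\rr d),
\end{equation*}
which closes the argument. I expect no real obstacle here: the only thing to verify carefully is that $\emptyset$ genuinely arises as an anisotropic annular set in the sense of \eqref{eq:annulardef} (so that Definition \ref{def:filtersingularities} applies to it) and that the ellipticity on the complement of $\rB_{k,m}$ is strong enough to yield a global parametrix; both are immediate once the definitions are unfolded.
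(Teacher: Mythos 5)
Your proposal is correct and follows essentially the same route as the paper: unpack $\emptyset \in \mathcal{F}(u)$ to obtain $a \in G^{r,\sigma}$ elliptic on $\rr{2d}\setminus\rB_{k,m}$ with $a^w u \in \cS(\rr d)$, invoke \cite[Lemma~6.3]{RW2} for a parametrix $b$ with $b\wpr a = 1 + r_0$, $r_0 \in \cS(\rr{2d})$, and conclude $u = b^w a^w u - r_0^w u \in \cS(\rr d)$. The extra verification you flag (that the complement of $\rB_{k,m}$ is of the annular form and that ellipticity there is genuine global ellipticity) is correct but left implicit in the paper.
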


\begin{proof}
In Remark \ref{rem:filter} (2) we have shown that $u \in \cS(\rr d)$ implies $\emptyset \in \mathcal{F}(u)$. 
	
Suppose on the other hand $\emptyset \in \mathcal{F}(u)$. 
Then there exists $a \in G^{r,\sigma}$ such that $a^w(x,D) u \in \cS(\rr d)$ and $a$ is elliptic on $\rr {2d} \setminus \rB_{k,m}$. 
There exists a parametrix $b \in G^{-r,\sigma}$ which satisfies 
$b \wpr a = 1 + r$ where $r \in \cS(\rr {2d})$. 
We conclude
\begin{equation*}
u = b^w a^w u - r^w u \in \cS
\end{equation*}
since $b^w: \cS \to \cS$ is continuous and $r^w: \cS' \to \cS$ is regularizing.
\end{proof}

In Definition \ref{def:filtersingularities} we claim implicitly that $\mathcal F (u) \subseteq \rP(\rr {2d})$ is a 
\textit{filter}, which needs to be proved. 
A filter of subsets of $\rr {2d}$ is denoted by $\mathcal F \subseteq \rP(\rr {2d})$ and defined by the following properties
\cite{Schaefer1}. 

\begin{enumerate}
	
\item $\mathcal F \neq \emptyset$;
		
\item If $F \in \mathcal F$ and $F \subseteq G \subseteq \rr {2d}$ then $G \in \mathcal F$; 
	
\item If $F, G \in \mathcal F$ then $F \cap G \in \mathcal F$.
	
\end{enumerate}

The following result shows that $\mathcal F (u) \subseteq \rP(\rr {2d})$ indeed is a filter. 

\begin{lemma}\label{lem:filter}
If $k,m \in \no \setminus 0$, $u \in \cS'(\rr d)$ and $\mathcal F (u) \subseteq \rP(\rr {2d})$ is defined as in Definition \ref{def:filtersingularities}
then $\mathcal F (u)$ is a filter. 
\end{lemma}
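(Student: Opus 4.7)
The plan is to verify each of the three defining axioms of a filter in turn. Property (1) is immediate from Remark \ref{rem:filter}(1): for any bounded $\Sigma$ the set $\wt\Sigma$ lies in $\mathcal F(u)$. For property (2), I would note that $\wt\Sigma_1 \subseteq \wt\Sigma_2$ is equivalent to $\Sigma_1 \subseteq \Sigma_2$, and hence gives the reverse inclusion $\rr{2d}\setminus(\wt\Sigma_2 \cup \rB_{k,m}) \subseteq \rr{2d}\setminus(\wt\Sigma_1 \cup \rB_{k,m})$; any symbol $a \in G^{r,\sigma}$ witnessing smoothness on the larger set (elliptic there with $a^w u \in \cS(\rr d)$) is automatically elliptic on the smaller subset, so the same $a$ witnesses $\wt\Sigma_2 \in \mathcal F(u)$.

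The substance of the proof lies in property (3). Setting $A = (1,+\infty) \setminus \Sigma_1$ and $B = (1,+\infty) \setminus \Sigma_2$, both subsets satisfy \eqref{eq:Sigmacondition}, and one checks that
\[
\rr{2d} \setminus \left( \wt{\Sigma_1 \cap \Sigma_2} \cup \rB_{k,m} \right) = \wt A \cup \wt B,
\]
while the hypothesis $\wt\Sigma_j \in \mathcal F(u)$ says $u$ is smooth on each of $\wt A$ and $\wt B$ individually. To combine these witnesses, I would apply Proposition \ref{prop:smoothannular} to both $A$ and $B$: for suitable $0 < \ep_A < \delta_A$ and $0 < \ep_B < \delta_B$ this produces cutoff symbols $q_A, q_B \in G^{0,\sigma}$ of the form described in Proposition \ref{prop:qsymbol}, each valued in $[0,1]$ and identically equal to $1$ on $\wt A_{\ep_A} \supseteq \wt A$ and $\wt B_{\ep_B} \supseteq \wt B$ respectively, and satisfying $q_A^w u, q_B^w u \in \cS(\rr d)$.

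I would then set $q = q_A + q_B \in G^{0,\sigma}$. By linearity $q^w u \in \cS(\rr d)$, and since at every point of $\wt A \cup \wt B$ at least one of the non-negative summands equals $1$, we have $q \geqs 1$ on this set, which is ellipticity of order zero. Definition \ref{def:smoothannular} then gives that $u$ is smooth in $\rr{2d} \setminus (\wt{\Sigma_1 \cap \Sigma_2} \cup \rB_{k,m})$, whence $\wt\Sigma_1 \cap \wt\Sigma_2 \in \mathcal F(u)$. The main subtlety — and the reason Proposition \ref{prop:smoothannular} is essential — is that the arbitrary elliptic symbols supplied by Definition \ref{def:smoothannular} may have different orders and cannot simply be added to produce an elliptic symbol; the uniform order-zero, non-negative cutoffs $q_{\ep,\delta}$ from Proposition \ref{prop:qsymbol} are precisely what bypass this obstacle.
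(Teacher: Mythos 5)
Your proof is correct and follows essentially the same route as the paper: properties (1) and (2) are handled identically, and for property (3) both arguments invoke Proposition \ref{prop:smoothannular} to replace the arbitrary-order elliptic witnesses of Definition \ref{def:smoothannular} by non-negative order-zero cutoffs and then add them. Your version merely makes the ellipticity of the sum more explicit by observing that it is bounded below by $1$ on $\wt A \cup \wt B$, whereas the paper argues the same point directly from non-negativity.
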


\begin{proof}
In Remark \ref{rem:filter} (1) we have proved that $\mathcal F(u) \neq \emptyset$
for any $u \in \cS'(\rr d)$. Thus property $(1)$ of a filter holds. 
	
Suppose $\wt \Sigma \in \mathcal F (u)$ and $\wt \Sigma \subseteq \wt \Lambda$. 
Then there exists $a \in G^{r,\sigma}$ such that $a^w u \in \cS(\rr d)$ and $a$ is elliptic on 
$\rr {2d} \setminus \left( \wt \Sigma \cup \rB_{k,m} \right)$.
Due to 
$\rr {2d} \setminus \wt \Lambda \subseteq \rr {2d} \setminus \wt \Sigma$
the symbol $a$ is also elliptic on $\rr {2d} \setminus \left( \wt \Lambda \cup \rB_{k,m} \right)$. 
Thus $u$ is smooth in $\rr {2d} \setminus \left( \wt \Lambda \cup \rB_{k,m} \right)$ and hence $\wt \Lambda \in \mathcal F (u)$. 
We have shown property $(2)$ of a filter. 
	
Finally assume $\wt \Sigma, \wt \Lambda \in \mathcal F (u)$. 
By Proposition \ref{prop:smoothannular} there exist 
$a,b \in G^{0,\sigma}$ such that $a^w u \in \cS(\rr d)$, $b^w u \in \cS(\rr d)$, $a, b \geqs 0$, $a$ is elliptic on 
$\rr {2d} \setminus \left( \wt \Sigma \cup \rB_{k,m} \right)$ and $b$ is elliptic on $\rr {2d} \setminus \left( \wt \Lambda \cup \rB_{k,m} \right)$.
Then $a+b \in G^{0,\sigma}$ is elliptic on $\rr {2d} \setminus \left( (\wt \Sigma \cap \wt \Lambda ) \cup \rB_{k,m} \right)$
and $(a+b)^w(x,D) u \in \cS(\rr d)$. It follows $\wt \Sigma \cap \wt \Lambda \in \mathcal F (u)$
which proves property $(3)$ of a filter. 
\end{proof}

\begin{remark}\label{rem:property4}
In \cite{Schaefer1} a filter is assumed to satisfy properties (1), (2), (3), and furthermore 
$\emptyset \notin \mathcal F$. 
We exclude the last property from our definition of filter
in order to allow $u \in \cS(\rr d)$, cf. 
Lemma \ref{lem:charschwartz}.
\end{remark}

\section{Proof of Theorem \ref{thm:propagationannular}}\label{sec:propannular}

The proof is conceptually similar to the proof of Theorem \ref{thm:propanisogabor}. 
Before we start with the proof of Theorem \ref{thm:propagationannular} we state a simple lemma 
concerning the Poisson bracket of $a,b \in C^1(\rr {2d})$ that will be useful. 

\begin{lemma}\label{lem:Poisson}
If $a  \in C^1(\rr {2d})$ and $g \in C^1 (\co)$ then $\{ a, g \circ a \} = 0$. 
\end{lemma}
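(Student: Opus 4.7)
The proof is a one-line application of the chain rule, so my plan is to spell out exactly which chain rule is being invoked and then observe the immediate cancellation inside the Poisson bracket.

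First I would compute the gradients of $b := g \circ a$ with respect to $x$ and $\xi$. Writing the Poisson bracket as in \eqref{eq:Poissonbracket},
\begin{equation*}
\{ a, b \} = \la \nabla_\xi a, \nabla_x b \ra - \la \nabla_x a, \nabla_\xi b \ra,
\end{equation*}
the chain rule (applied componentwise to the real and imaginary parts of $g$, viewed as a $C^1$ map $\co \cong \rr 2 \to \co$, and noting that the Weyl symbols entering the lemma are real-valued so $a$ takes values in $\ro \subseteq \co$) yields
\begin{equation*}
\nabla_x (g \circ a)(x,\xi) = g'(a(x,\xi))\, \nabla_x a(x,\xi), \qquad \nabla_\xi (g \circ a)(x,\xi) = g'(a(x,\xi))\, \nabla_\xi a(x,\xi).
\end{equation*}

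Substituting into the Poisson bracket and factoring out the scalar $g'(a(x,\xi))$ gives
\begin{equation*}
\{ a, g \circ a \}(x,\xi) = g'(a(x,\xi))\bigl( \la \nabla_\xi a, \nabla_x a \ra - \la \nabla_x a, \nabla_\xi a \ra \bigr)(x,\xi) = 0,
\end{equation*}
by symmetry of the scalar product. This establishes the claim.

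There is essentially no obstacle here. The only subtlety worth flagging is the precise interpretation of $g'$ when $g$ is merely $C^1$ on $\co$; but since the lemma will be applied in Section \ref{sec:propannular} with $a$ the (real-valued) Weyl symbol of the positive operator in \eqref{eq:anharmonicCP}, the composition $g \circ a$ reduces to the composition with the restriction $g|_\ro$, for which the ordinary chain rule for real variables applies directly.
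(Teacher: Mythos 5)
Your proof is correct and takes essentially the same route as the paper's: both are a direct application of the chain rule to $g \circ a$, followed by the observation that the resulting scalar factor $g'(a)$ makes the two terms of the Poisson bracket cancel by symmetry of the inner product. The paper writes the computation componentwise rather than in gradient form, but the argument is identical.
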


\begin{proof}
For $1 \leqs j \leqs d$ we have 
\begin{align*}
\partial_{\xi_j} a(x,\xi) \partial_{x_j} \big( g( a (x,\xi) ) \big)
& = \partial_{\xi_j} a(x,\xi) \, g' \big( a(x,\xi) \big) \partial_{x_j} a(x,\xi) \\
& = \partial_{x_j} a(x,\xi) \partial_{\xi_j} \big( g( a (x,\xi) ) \big). 
\end{align*}
\end{proof}

\vskip0.2cm

\textit{Proof of Theorem \ref{thm:propagationannular}.} 
Suppose $\wt \Lambda \subseteq \rr {2d}$ is defined by 
\eqref{eq:annulardef} with $\Lambda \subseteq [0, +\infty)$, 
and suppose $\wt \Lambda \in \mathcal F(u_0)$. 
Then $u_0 \in \cS'(\rr d)$ is smooth in $\wt \Sigma = \rr {2d} \setminus \left( \wt \Lambda \cup \rB_{k,m} \right)$. 
Proposition \ref{prop:smoothannular} implies that 
there exists $q_{\ep,\delta} \in G^{0,\sigma}$ defined by \eqref{eq:qdef}, Lemma \ref{lem:symbolcutoff} and Remark \ref{rem:extension}, 
$q_{\ep,\delta}$ is elliptic on $\wt \Sigma_{\ep}$, 
and $q_{\ep,\delta}^w u_0 \in \cS(\rr d)$,
for some $0 < \delta < 1$ and any $0 < \ep < \delta$. 

From \eqref{eq:schwartzmodsp} we obtain 
\begin{equation}\label{eq:schwartzmodspace1}
q_{\ep,\delta}^w u_0 \in M_{\sigma,s}(\rr d) \qquad \forall s \in \ro,
\end{equation}
and \eqref{eq:temperedmodsp} implies
$u_0 \in M_{\sigma,s_0}(\rr d)$ for some $s_0 \in \ro$. 
In view of Theorem \ref{thm:wptheorem1} this implies that there exists a unique solution 
$u \in C( \ro , M_{\sigma,s_0}(\rr d))$ to \eqref{eq:anharmonicCP}. 
Hence  
\begin{equation}\label{eq:modspace1}
q_{\ep,\delta}^w u \in C( \ro , M_{\sigma,s_0}(\rr d))
\end{equation} 
since $q_{\ep,\delta} \in G^{0,\sigma}$ and we can apply Theorem \ref{thm:PseudoShubinSobolev}. 

Again we use \eqref{eq:modspace1} as the starting point of an iteration. 
We first deduce from \eqref{eq:modspace1} the improved regularity
\begin{equation}\label{eq:regularization}
q_{\ep,\delta}^w u \in C( \ro , M_{\sigma,s_0 + \alpha}(\rr d))
\end{equation} 
where 
\begin{equation*}
\alpha = 3 (1 + \sigma) - 2 k > 0
\end{equation*} 
due to the assumption \eqref{eq:kmcondition}.

Thus we regard $v = q_{\ep,\delta}^w u$ as solution of the non-homogeneous Cauchy problem 
\begin{equation}\label{eq:SchrodEqIter3}
\begin{cases} 
P v = f \\ 
v(0,\cdot) = v_0
\end{cases}
\end{equation}
where 
\begin{equation*}
P = \partial_t + i \left(  |x|^{2k} + (-\Delta)^m \right)
\end{equation*}
and $v_0 = q_{\ep,\delta}^w u_0 \in M_{\sigma, s_0 + \alpha}(\rr d)$
in view of \eqref{eq:schwartzmodspace1}. 
Concerning $f = f(t,\cdot)$ we will show that $f \in C( \ro , M_{\sigma,s_0 + \alpha}(\rr d))$. 

Since $P u = 0$ we have
\begin{equation*}
f = P v = P q_{\ep,\delta}^w u = P q_{\ep,\delta}^w u - q_{\ep,\delta}^w Pu =  [ i a^w, q_{\ep,\delta}^w ] u
\end{equation*}
where $a(x,\xi) = |x|^{2k} + |\xi|^{2m}$.
The symbol $a$ is a polynomial so it has only a finite number of nonzero derivatives. 
The expansion \eqref{eq:commutatorsymbol1} yields
\begin{equation}\label{eq:commutatorsymbol2}
b_{\ep,\delta} = i \left( a \wpr  q_{\ep,\delta} - q_{\ep,\delta} \wpr a \right) 
\sim \sum_{j=0}^{\max(k,m)-1} \frac{(-1)^{j}}{(2j + 1)! 2^{ 2 j }} \, \{ a, q_{\ep,\delta} \}_{2j+1}. 
\end{equation}
In fact if $j \geqs \max(k,m)$ then $2 j + 1 \geqs \max(2 k,2 m) + 1$
which implies that $\{ a, q_{\ep,\delta} \}_{2j+1} = 0$. 

By the construction of $q_{\ep,\delta}$, 
cf. \eqref{eq:qdef}, Lemma \ref{lem:symbolcutoff} and Remark \ref{rem:extension},
we have $q_{\ep,\delta} = g \circ a$ with $g \in C^\infty(\ro)$. 
Lemma \ref{lem:Poisson} thus implies that $\{ a, q_{\ep,\delta} \} = 0$
which in turn means that the expansion \eqref{eq:commutatorsymbol2}
starts at $j = 1$. 
Since $a \in G^{2k,\sigma}$ and $q_{\ep,\delta} \in G^{0,\sigma}$
it follows from the calculus that $b_{\ep,\delta} \in G^{2k - 3 (1 + \sigma), \sigma} = G^{-\alpha,\sigma}$. 
(This argument is similar to \cite[Lemme~7.1]{Helffer1}.)

From $u\in C( \ro , M_{\sigma,s_0}(\rr d))$ and Theorem \ref{thm:PseudoShubinSobolev} we thus obtain
\begin{equation*}
f = P v = b_{\ep,\delta}^w u \in C( \ro , M_{\sigma,s_0 + \alpha}(\rr d) ). 
\end{equation*}
Now \eqref{eq:SchrodEqIter3}, $v_0 \in M_{\sigma, s_0 + \alpha}(\rr d)$ and Theorem \ref{thm:wptheorem3}
show that \eqref{eq:regularization} holds true. 

In the second step we take $0 < \ep' < \delta' \leqs \ep$
which gives $\supp q_{\ep', \delta'} \cap \supp (1 - q_{\ep,\delta} ) = \emptyset$. 
By the calculus this yields $r = q_{\ep', \delta'} \wpr ( 1 - q_{\ep, \delta} ) \in \cS(\rr {2d})$ which combined with
\eqref{eq:schwartzmodspace1} and
Theorem \ref{thm:PseudoShubinSobolev} 
implies
\begin{equation}\label{eq:schwartzmodspace2}
q_{\ep', \delta'}^w u_0 = q_{\ep', \delta'}^w q_{\ep , \delta }^w u_0 + r^w u_0 \in M_{\sigma,s}(\rr d) \qquad \forall s \in \ro
\end{equation}
since $r^w: \cS' \to \cS$ is regularizing. 

Set $w = q_{\ep',\delta'}^w u$ and consider the non-homogeneous Cauchy problem 
\begin{equation}\label{eq:SchrodEqIter4}
\begin{cases} 
P w = g \\ 
w(0,\cdot) = w_0
\end{cases}
\end{equation}
where $w_0 = q_{\ep',\delta'}^w u_0 \in M_{\sigma, s_0 + 2 \alpha}(\rr d)$ by 
\eqref{eq:schwartzmodspace2}. 

As above we obtain using \eqref{eq:commutatorsymbol2}
\begin{equation*}
g = P q_{\ep',\delta'}^w u - q_{\ep',\delta'}^w Pu= 
b_{\ep',\delta'}^w u
= b_{\ep',\delta'}^w q_{\ep,\delta}^w u+ b_{\ep',\delta'}^w (1 - q_{\ep,\delta})^w u
\end{equation*}
with $b_{\ep',\delta'} \in G^{-\alpha,\sigma}$. 
Since $\supp b_{\ep', \delta'} \subseteq \wt \Sigma_{\delta'}$ we have $\supp b_{\ep', \delta'} \cap \supp (1 - q_{\ep,\delta} ) = \emptyset$
which implies that $b_{\ep',\delta'}^w (1 - q_{\ep,\delta})^w:  \cS' \to \cS$ is regularizing.  
Thus $g \in C( \ro , M_{\sigma,s_0 + 2 \alpha}(\rr d) )$
by \eqref{eq:regularization} and again Theorem \ref{thm:PseudoShubinSobolev}. 
Again \eqref{eq:SchrodEqIter4} and Theorem \ref{thm:wptheorem3} give 
$w = q_{\ep',\delta'}^w u \in C( \ro , M_{\sigma,s_0 + 2 \alpha}(\rr d) )$. 

The bootstrap argument shows that we may obtain
\begin{equation*}
q_{\ep',\delta'}^w u \in C( \ro , M_{\sigma,s_0 + k \alpha}(\rr d) )
\end{equation*}
for any $k \in \no$, in each step decreasing $0 < \ep' < \delta' \leqs \ep$ with $\ep > 0$ coming from the preceding step. 
We may do this keeping $\ep' > \delta_0 > 0$ for a fixed $\delta_0 > 0$. 
If $0 < \ep_0 < \delta_0$ then for any $t \in \ro$
\begin{equation*}
q_{\ep_0,\delta_0}^w u(t,\cdot) \in M_{\sigma,s}(\rr d)
\end{equation*}
for any $s \in \ro$. 
Thus $q_{\ep_0,\delta_0}^w u(t,\cdot) \in \cS(\rr d)$ for any $t \in \ro$. 
Since $q_{\ep_0,\delta_0}$ is elliptic on $\wt \Sigma_{\ep_0}$
this proves that $u(t,\cdot)$ is smooth in $\wt \Sigma$ for any $t \in \ro$. 
It follows that $\wt \Lambda \in \mathcal F(u (t,\cdot))$ so we have shown
$\mathcal F(u_0) \subseteq \mathcal F(u (t,\cdot))$ for any $t \in \ro$. 
The opposite inclusion follows from 
$\cK_t^{-1} = \cK_{-t}$. 
\qed

\section*{Acknowledgments}
The first author is partially supported by the INDAM-GNAMPA project CUP E53C23001670001.
This work is partially supported by the MIUR project ``Dipartimenti di Eccellenza 2018-2022'' (CUP E11G18000350001).

\bibliographystyle{amsplain}

\end{document}